\documentclass[11pt]{article}
\usepackage[T1]{fontenc}
\usepackage[margin=1in]{geometry}

\usepackage{amsmath,amssymb,amsthm,mathtools, xcolor}
\usepackage{enumitem}
\usepackage[numbers, sort]{natbib}

\newtheorem{theorem}{Theorem}[section]
\newtheorem{lemma}[theorem]{Lemma}
\newtheorem{proposition}[theorem]{Proposition}
\newtheorem{corollary}[theorem]{Corollary}

\theoremstyle{remark}
\newtheorem{remark}{Remark}[section]
\newtheorem*{openproblem}{Problem Statement}
\newtheorem*{solution}{Resolution}

\newcommand{\R}{\mathbb{R}}
\newcommand{\Z}{\mathbb{Z}}

\newcommand{\norm}[1]{\left\lVert #1 \right\rVert}

\newcommand{\X}{\mathcal{X}}
\newcommand{\F}{\mathcal{F}}
\newcommand{\G}{\mathcal{G}}

\newcommand{\E}{\mathbb{E}}
\newcommand{\Prob}{\mathbb{P}}
\newcommand{\Var}{\operatorname{Var}}

\newcommand{\Normal}{\mathsf{N}}
\newcommand{\Exp}{\mathsf{Exp}}
\newcommand{\Geom}{\mathsf{Geom}}

\newcommand{\abs}[1]{\left\lvert #1\right\rvert}

\newcommand{\one}{\mathbf{1}}
\newcommand{\Fcoef}[2]{\widehat{#1}(#2)}
\newcommand{\Fs}{\mathcal{S}}
\newcommand{\Fsum}[2]{(\Fs_{#1}#2)(0)}
\newcommand{\Fmean}[2]{(\mathcal{M}_{#1}#2)(0)}
\newcommand{\Fm}{\mathcal{M}}

\newcommand{\RunSigma}{\mathcal{G}}

\newcommand{\C}{\mathsf{D}} 
\newcommand{\B}{E}

\usepackage{mathtools}
\usepackage{array}
\usepackage{url}
\usepackage{hyperref}
\mathtoolsset{showonlyrefs=true}
\numberwithin{equation}{section} 
\usepackage{tikz}
\usetikzlibrary{arrows.meta,positioning}

\tikzset{
  mcstate/.style={
    draw=black!70,
    fill=black!4,
    rounded corners=2pt,
    minimum height=7mm,
    inner xsep=5pt,
    font=\small
  },
  mchub/.style={
    circle,
    draw=black!85,
    fill=black!12,
    minimum size=9mm,
    inner sep=0pt,
    font=\small
  },
  mcedge/.style={-{Latex[length=2mm]},thick,draw=black!80},
  mcfactorone/.style={-{Latex[length=2mm]},thick,draw=black!85},
  mcfactortwo/.style={
    -{Latex[length=2mm]},thick,densely dashed,draw=black!70
  },
  mcproduct/.style={-{Latex[length=2mm]},very thick,draw=black},
  mcprob/.style={font=\scriptsize,fill=white,inner sep=1.5pt},
  mcvalue/.style={font=\scriptsize,text=black!75,align=center}
}

\begin{document}

\centerline{\LARGE\bf Markov Chain CLTs: Resolving Open Problems}
\bigskip
\centerline{\large Austin Brown$^a$, Jeffrey S.\ Rosenthal$^b$, and Quan Zhou$^{a,}$\footnote{Corresponding author. Email: \texttt{quan@stat.tamu.edu}}}
\bigskip
\centerline{\large $^a$Texas A\&M University \ and \ $^b$University of Toronto}
\bigskip

\begin{abstract}
Markov chain central limit theorems (CLTs) and their associated variances are very important for implementing Markov chain Monte Carlo  algorithms among other applications. 
H\"aggstr\"om and Rosenthal (2007) presented various results regarding the equality of different formulae for this variance and also posed seven open problems. We resolve all seven in this paper. 
For stationary, ergodic and reversible chains, we prove that whenever the normalized partial sums satisfy a $\sqrt{n}$-CLT, the variance limit is finite if the function is square-integrable, otherwise undefined. Moreover, failure of the $\sqrt{n}$-CLT forces the normalized partial sums to be non-tight. 
We also show that Roberts' holding-probability condition precludes a CLT even without assuming reversibility or square-integrability.  
Finally, we develop a general principle that expresses Fourier coefficients as the autocovariances of an ergodic nonreversible Markov chain. 
\end{abstract}

\section{Introduction} 
The central limit theorem (CLT) for i.i.d.\ finite-variance random variables is well known. For additive functionals of Markov chains, similar CLTs also exist, and the theory has been developed along several complementary lines. Spectral methods yield CLTs for reversible chains~\citep{KipnisVaradhan1986, DerriennicLin2001}; 
martingale-based methods treat broader classes of chains through $L^2$ bounds on conditional expectations~\citep{MaxwellWoodroofe2000, WuWoodroofe2004, CunyPeligrad2012}; drift, mixing, and regeneration methods provide verifiable conditions on general state spaces~\citep{Jones2004, RobertsRosenthal2004, bednorz:etal:2008};
and more recent works have also investigated criteria for chain-wide CLTs and heavy-tailed stationary distributions~\citep{CunyLin2025, BresarMijatovicRoberts2025}. 
Such CLT results are particularly useful for obtaining error bounds in Markov chain Monte Carlo  algorithms~\citep{Geyer1992, JonesEtAl2006, FlegalJones2010}, which often require estimating the asymptotic variance $\sigma^2$ associated with the corresponding CLT. 

\citet{HaggstromRosenthal2007} studied three variance formulae associated with a centered function $h$ of a stationary Markov chain $(X_n)_{n \geq 0}$. In their notation, $A$ is the limiting variance of $S_n/\sqrt{n}$, where $S_n$ is the  partial sum of $(h(X_k))_{k \geq 0}$, $B$ is the corresponding infinite sum of lagged autocovariances, and for reversible chains with square-integrable $h$, $C$ is the spectral representation of the asymptotic variance.  Formal definitions   are given in Section~\ref{sec:setup}. 
Building on the classical result of~\citet{KipnisVaradhan1986} on the relation between $\sqrt{n}$-CLT and  finiteness of $C$, H\"aggstr\"om and Rosenthal established conditions under which the quantities $A, B, C$ are all equal to each other and to the variance $\sigma^2$ appearing in the CLT. 
The behavior of $A$ has also been studied in many other works. For example, chain-level conditions ensuring finiteness of $A$ are identified in~\citet{roberts:rosenthal:2008} and~\citet{DerriennicLin2011}, spectral characterizations are developed in~\citet{deligiannidis:etal:2015} for normal (not necessarily reversible) transition operators,  variational and Poisson-equation formulas for nonreversible chains are given in~\citet{HuangMao2023}, and sharper results are available for time-sampled chains, independence Metropolis--Hastings algorithms, alternating kernels and Gibbs samplers~\citep{LatuszynskiRoberts2013, maire:douc:olsson:2014, deligiannidis:etal:2018, qin:2024}.  
Against this background, \citet{HaggstromRosenthal2007} posed seven open problems (henceforth HR Problems) regarding the behavior of $A, B, C$ under more general conditions.  
To our knowledge, those problems have remained unresolved, and this paper presents solutions to all of them. 

HR Problems 2 and 6 ask, for reversible chains, whether $A$ must be finite or undefined for a $\sqrt{n}$-CLT to hold.  Existing theories and examples suggest that the answer is affirmative~\citep{DoukhanMassartRio1994, HaggstromRosenthal2007, zhao:woodroofe:2010, longla:peligrad:2012, bradley:2026}. 
We establish this conclusion assuming only tightness of $(S_n/\sqrt{n})_{n \geq 1}$ and obtain a sharp classification of the relation between the \(\sqrt n\)-CLT and the variance quantities $A$ and $B$. 
As shown in Table~\ref{tab:clt-classification}, every ergodic and reversible chain with centered $h \in L^1(\pi)$ satisfies the following dichotomy: either a $\sqrt{n}$-CLT holds, or the sequence $(S_n/\sqrt{n})_{n \geq 1}$ is not even tight. This extends the same dichotomy result of~\citet[Theorem II.3.1]{Chen1999} for Harris recurrent aperiodic chains with  $h \in L^2(\pi)$. 
When $h \in L^2(\pi)$, our additional conclusion is that tightness forces $A = B = C < \infty$; combined with classical results, this shows that $A < \infty$ is both necessary and sufficient for a $\sqrt{n}$-CLT. 
When the CLT holds but $h \notin L^2(\pi)$, we prove that $A, B$ must be undefined: $A$ is undefined since the variance of $S_n/\sqrt{n}$ has oscillatory behavior with limit inferior $\sigma^2$ and limit superior $\infty$, and $B$ is undefined since lagged autocovariances (which exist) alternate between $\infty$ and $-\infty$. Thus, the qualitative behavior exhibited by Example 14 of~\citet{HaggstromRosenthal2007} is unavoidable. 
(And, $C$ cannot even be defined when $h\not\in L^2(\pi)$.)
A key intermediate result is that, for reversible chains, tightness of $(S_n/\sqrt{n})_{n\geq 1}$ implies square-integrability of $h(X_0) + h(X_1)$, even when $h \notin L^2(\pi)$; in other words, the non-square-integrable contribution must cancel across adjacent observations. 
The proof involves a novel resolvent argument using characteristic functions. 
For $h \in L^2(\pi)$, we give an alternative regenerative proof that combines known results for cycle sums~\citep{Glynn1997, bednorz:etal:2008} with a forward-backward martingale decomposition~\citep{Wu1999} to establish $C < \infty$, without assuming a finite second moment of the cycle length.

HR Problem 1 concerns the role of reversibility in such results. It is known that for nonreversible chains, the CLT behavior can be considerably less structured~\citep{Haggstrom2005, HaggstromRosenthal2007}. 
Our solution to HR Problem~1 provides an explicit example where $h \in L^2(\pi)$, $A < \infty$ but $B$ is undefined. 
To construct it, we develop a general Fourier-to-Markov covariance realization principle: from any strictly positive, continuous, even, $2\pi$-periodic function $f$, we can construct a countable-state, irreducible, aperiodic, positive recurrent Markov chain with a centered $h \in L^2(\pi)$ whose lagged autocovariances coincide with the Fourier coefficients of $f$. 
Thus, the covariance behavior of nonreversible Markov chains is rich enough to encode the Fourier coefficients of every function in this broad class.

HR Problems 3, 5, and 7 concern a divergence condition introduced by~\citet{Roberts1999}, which has been used in the literature to establish the failure of $\sqrt{n}$-CLT for independence Metropolis--Hastings samplers~\citep{Roberts1999}, although a complete proof has not previously appeared. 
We fill this gap by proving that, even without requiring reversibility or $h \in L^2(\pi)$, Roberts' condition cannot coexist with a $\sqrt{n}$-CLT.  
The proof involves deriving a lower bound on $\sigma^2$ using holding probabilities, which extends existing formulae for reversible chains with $h \in L^2(\pi)$~\citep{DoucetEtAl2015, deligiannidis:etal:2018}.   

\begin{table}[t]
\centering
\small
\begin{tabular}{
    c
    c
    >{\centering\arraybackslash}m{8cm}
}
\hline
Behavior of $h$ &
Behavior of $S_n$ &
Behavior of \(A, B, C\)  \\ 
\hline\noalign{\vskip 3pt} 
$h \in L^2(\pi)$ & $\sqrt{n}$-CLT exists &
\(A=B=C=\sigma^2<\infty\) 
\\ \noalign{\vskip 5pt}
$h \in L^2(\pi)$ &  $(S_n/\sqrt{n})$ not tight &
\(A=B=C=\infty\) 
\\ \noalign{\vskip 5pt}
$h \notin L^2(\pi)$ & $\sqrt{n}$-CLT exists &
$A, B$ are undefined; 
$\liminf A_n = \sigma^2,$ $\limsup A_n = \infty$,
and $\gamma_{2k} = \infty$, 
$\gamma_{2k + 1} = -\infty$. 
\\ \noalign{\vskip 5pt}
$h \notin L^2(\pi)$ &  $(S_n/\sqrt{n})$ not tight &
Neither $A$ nor $B$ can be finite  
\\ 
\hline
\end{tabular}
\caption{
Four exhaustive scenarios for ergodic reversible Markov chains with stationary distribution $\pi$ and centered \(h\in L^1(\pi)\); $A_n = \Var(S_n)/n$ and  $\gamma_k$ denotes the lag-$k$ autocovariance. } 
\label{tab:clt-classification}
\end{table}

One interesting aspect of this project is that all proofs were initially obtained by the GPT-5.6 Sol model \citep{OpenAI2026GPT56}. 
Although we have completely re-written and verified its solutions, the essence of our results all originated there.  Our experience adds to the emerging evidence that frontier large language model software has now reached the point where it is competitive with human mathematicians in resolving challenging research questions, provided that their outputs are carefully checked and revised by humans~\citep{BubeckEtAl2025GPT5}, which has implications far beyond the current study~\citep{AlperEtAl2026Leiden}.

\paragraph{AI Statement of Use.} GPT-5.6 Sol was used to develop the proofs, generate the figures, and search the literature. 
In particular, the solution to HR Problem 2 involved a lengthy interactive discussion and investigation into counterexamples and techniques with GPT-5.6 Sol over the course of a week.
The authors verified, substantially revised and reorganized all proof arguments, and they assume full responsibility for the accuracy and presentation of the paper.  
%The proof of Theorem~\ref{thm:tight-variance-converse} has been mostly LEAN verified. 
 
\section{Formal Definitions and HR Problems}\label{sec:setup}

\subsection{Notation and Background} 
Throughout this paper, we let $P$ be a time-homogeneous Markov transition kernel on a measurable space $(\X, \F)$, with stationary probability distribution $\pi$. 
Let $\E_{\mu}, \Var_\mu$ denote  the expectation and variance under initial distribution $\mu$, and we write $\E_x = \E_{\delta_x}$.  
Denote the $n$-step transition kernel by $P^n$. 
For every measurable function $f : \X \to \R$ and every probability measure $\mu$ on  $(\X, \F)$, define 
\begin{align*}
(P^nf)( x ) = \int_{\X} f(y) P^n(x, dy),  \qquad (\mu P^n) ( \B ) = \int_{\X} P^n(x, \B ) \mu(dx),
\end{align*}  
for every $x \in \X$ and $\B \in \F$. 
We also write $\pi(f) = \int_{\X} f(x) \pi(d x)$. 
For convenience, we assume that $\X$ is a Polish space and $\F$ is the Borel sigma-algebra so that the function $x \mapsto  P(x, \{x\})$ is measurable and $\F$ is countably generated.  
We also assume that $P$ is  ergodic, which means that it is $\phi$-irreducible and aperiodic. 
It follows that $P$ is also recurrent~\citep[Proposition 10.1.1]{meyn:tweedie:2009} and $\pi$-irreducible~\citep[Theorem 10.4.9]{meyn:tweedie:2009}, and $P^n(x, \cdot)$ converges to $\pi$ in total variation distance for $\pi$-almost every $x$~\citep[Theorem 4]{RobertsRosenthal2004}. 
Further, there exists an absorbing set $G$ with $\pi(G) = 1$ such that $P$ restricted to $G$ is positive Harris recurrent~\citep[Theorem 9.0.1]{meyn:tweedie:2009}. 

We always use $(X_n)_{n \geq 0}$ to denote a stationary Markov chain with kernel $P$.  
Fix some $h \in L^1_0(\pi)$; that is, $h$ is integrable with $\pi(h) = 0$. Define for $n \ge 1$,  
\begin{equation}\label{eq:partial-sum-definition}
S_n=\sum_{k=0}^{n-1}h(X_k). 
\end{equation}
We  say that a $\sqrt{n}$-central limit theorem (CLT) holds if, for some $\sigma^2 \in [0, \infty)$,
\begin{equation}
\frac{S_n}{\sqrt n} \Longrightarrow \Normal(0,\sigma^2),
\end{equation}
where $\Rightarrow $ denotes weak convergence;
we use $\sigma^2$ to denote this asymptotic variance throughout. 

We say a family of real-valued random variables $(U_i)_{i \in \mathcal{I}}$, where $\mathcal{I}$ is an arbitrary index set, is bounded in probability or tight if 
\begin{align*}
\lim_{N \to \infty} \sup_{i \in \mathcal{I}} \Prob( |U_i| \ge N) = 0.
\end{align*} 
If $S_n/\sqrt{n} $ converges in distribution or $\sup_{n \geq 1} n^{-1}\Var_\pi(S_n) < \infty$,  then the sequence $(S_n/\sqrt{n})_{n \geq 1}$ is bounded in probability. 

\subsection{Three Variance Expressions}
Denote the lag-$k$ autocovariance by
\begin{equation}\label{eq:autocovariance-definition}
\gamma_k=\E_\pi\bigl[h(X_0)h(X_k)\bigr],\qquad k\geq 0, 
\end{equation}
which is allowed to be infinite. When $h \in L^2(\pi)$, $|\gamma_k| < \infty$ for every $k$. 
For $n\geq 1$ and $m\geq 0$, let 
\begin{equation} \label{eq:def-A-B-1}
A_n=\frac{1}{n}\Var_\pi\left( S_n \right),\qquad B_m=\gamma_0+2\sum_{k=1}^m \gamma_k.
\end{equation}
Note that $B_m$ is well-defined provided that each $\gamma_k$ is well-defined and the resulting sum does not contain the form $\infty - \infty$.  
When the limits exist, write 
\begin{equation}\label{eq:A-B-definition}
A=\lim_{n\to\infty}A_n, \qquad  B=\lim_{m \to\infty}B_m, 
\end{equation}
and both limits are allowed to be infinite. We have the elementary identity 
\begin{equation}\label{eq:cesaro-identity}
A_n=\gamma_0+2\sum_{k=1}^{n-1}\left(1-\frac{k}{n}\right)\gamma_k
=\frac{1}{n}\sum_{m=0}^{n-1}B_m,
\end{equation}
provided that $(B_m)_{0 \leq m \leq n-1}$ are well-defined. 
Thus, $A_n$ is the Ces\`aro mean of $B_0, \dots, B_{n-1}$. Consequently, if $B$ exists, then $A$ exists and $A = B$.   
We also record two useful observations about the behavior of $(A_n)_{n \geq 1}$.  

\begin{lemma}\label{lem:inf-A-h2} 
If $A < \infty$, then $h \in L^2(\pi)$. 
If $h \notin L^2(\pi)$, then $\max\{A_n, A_{n+1}\} = \infty$ for every $n \geq 1$.  
\end{lemma}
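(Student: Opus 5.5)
The first claim follows from the second, so I will prove the second claim and deduce the first. Suppose $A<\infty$. Then $A_n<\infty$ for all sufficiently large $n$, and in particular there is some $n$ with both $A_n$ and $A_{n+1}$ finite. By the second claim this is impossible unless $h\in L^2(\pi)$.

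For the second claim, fix $n\ge 1$ and assume $h\notin L^2(\pi)$. The key observation is that $S_{n+1}-S_n=h(X_n)$, and by stationarity $h(X_n)$ has the same law as $h(X_0)$, so $\E_\pi h(X_n)^2=\infty$. Here $\Var_\pi(S_n)$ is understood as $\E_\pi S_n^2\in[0,\infty]$; this expectation is well defined because $S_n$ is integrable with mean zero. Suppose, for contradiction, that $A_n$ and $A_{n+1}$ are both finite. Then $S_n,S_{n+1}\in L^2$, and by Minkowski's inequality
\[
\norm{h(X_n)}_{L^2}\le \norm{S_{n+1}}_{L^2}+\norm{S_n}_{L^2}<\infty,
\]
which contradicts $\E_\pi h(X_0)^2=\infty$. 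Hence $\max\{A_n,A_{n+1}\}=\infty$. The same argument also covers $n=1$ without modification: there $S_1=h(X_0)$, so $A_1=\infty$ directly.

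I expect no step to be a real obstacle. The one point needing care is interpretive. When $h\notin L^2(\pi)$, $A_n$ must be read as $n^{-1}\E_\pi S_n^2$, possibly infinite, rather than through the covariance expansion \eqref{eq:cesaro-identity}, which may involve $\infty-\infty$. The plan sidesteps the autocovariances $\gamma_k$ entirely and relies only on the triangle inequality in $L^2$ and stationarity.
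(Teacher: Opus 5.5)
Your proposal is correct and follows essentially the same route as the paper. Both use $h(X_n)=S_{n+1}-S_n$ and stationarity to show that $S_n$ and $S_{n+1}$ cannot both be square-integrable when $h\notin L^2(\pi)$, then obtain the first claim from eventual finiteness of $A_n$. Your explicit Minkowski step and your reading of $A_n$ as $n^{-1}\E_\pi S_n^2$ make precise what the paper leaves implicit.
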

\begin{proof} 
Since $h(X_n)=S_{n+1}-S_n$ and $(X_n)_{n \geq 0}$ is stationary,  $h \in L^2(\pi)$ whenever both $S_n, S_{n + 1}$ have finite variance for some $n \geq 1$. 
Similarly, if $h \notin L^2(\pi)$, then at least one of $S_n, S_{n + 1}$ has infinite variance for every $n \geq 1$.  
The two assertions then follow.  
\end{proof}

We say that Roberts' condition holds if
\begin{equation}\label{eq:roberts-cond}
\lim_{n\rightarrow \infty} n \E_\pi\left[h(X_0)^2r(X_0)^n\right]  = \infty, \qquad r(x) = P(x, \{x\}). 
\end{equation}  

\begin{lemma}[Roberts, 1999]\label{lem:inf-A-roberts}
    If Roberts' condition holds, then $A = \infty$.
\end{lemma}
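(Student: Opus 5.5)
The plan is to bound $A_n$ from below directly, using only the event that the chain never moves during the first $n$ steps. On this event the partial sum is simply $n\,h(X_0)$, so its second moment contributes roughly $n^2$ times the quantity in Roberts' condition~\eqref{eq:roberts-cond}. This argument needs neither reversibility nor $h\in L^2(\pi)$, because all the integrands involved are nonnegative.

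\emph{Step 1: reduce $A_n$ to a second moment.} Since $h\in L^1_0(\pi)$ and the chain is stationary, $\E_\pi[S_n]=0$. Hence
$$\Var_\pi(S_n)=\E_\pi[S_n^2]\in[0,\infty],$$
with the convention that the variance is $+\infty$ when the second moment is infinite. This is consistent with the convention in Lemma~\ref{lem:inf-A-h2}.

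\emph{Step 2: restrict to the holding event.} Let $H_n=\{X_0=X_1=\dots=X_{n-1}\}$. This set is measurable because $\X$ is Polish, so the diagonal is Borel. On $H_n$ we have $S_n=n\,h(X_0)$. By the Markov property, $\Prob_\pi(H_n\mid X_0=x)=r(x)^{n-1}$, where $x\mapsto r(x)=P(x,\{x\})$ is measurable by the standing assumptions. Therefore
$$\E_\pi[S_n^2]\ \ge\ \E_\pi\bigl[S_n^2\one_{H_n}\bigr]=n^2\,\E_\pi\bigl[h(X_0)^2\,r(X_0)^{n-1}\bigr]\ \ge\ n^2\,\E_\pi\bigl[h(X_0)^2\,r(X_0)^{n}\bigr].$$
The last inequality holds because $0\le r\le 1$. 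All integrands are nonnegative, so these manipulations are valid even when the expectations are infinite.

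\emph{Step 3: conclude.} Dividing by $n$ gives
$$A_n\ \ge\ n\,\E_\pi\bigl[h(X_0)^2 r(X_0)^n\bigr].$$
Under Roberts' condition the right-hand side tends to $\infty$. Hence $A_n\to\infty$, so the limit $A$ exists and equals $\infty$.

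The only delicate point, and the one I would check most carefully, is Step 2. It requires that the conditional probability of holding for $n-1$ consecutive steps is exactly $r(x)^{n-1}$, which follows by iterating $\Prob(X_{k+1}=X_k\mid X_k=x)=P(x,\{x\})$. It also requires the bookkeeping with extended-real values when $h\notin L^2(\pi)$. In that case the bound may already read $\infty\ge\infty$ for large $n$, which is harmless.
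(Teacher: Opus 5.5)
Your proof is correct and matches the paper's approach. The paper cites Roberts (1999) and records exactly the bound $A_n \geq n\,\E_\pi[h(X_0)^2 r(X_0)^n]$, which your holding-event estimate $\E_\pi[S_n^2]\ge n^2\,\E_\pi[h(X_0)^2 r(X_0)^{n-1}]$ delivers; your use of nonnegativity to cover the case $h\notin L^2(\pi)$ is also sound.
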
 
\begin{proof}
See~\citet{Roberts1999}. In particular,  $A_n \geq n \E_\pi\left[h(X_0)^2r(X_0)^n\right]$  for each $n \geq 1$. 
\end{proof}  

When $P$ is reversible with respect to $\pi$ and $h \in L^2(\pi)$, we define the spectral variance of $h$ by
\begin{equation}\label{eq:def-C}
C = \int_{ [ -1, 1] } \frac{1 + \lambda}{1 - \lambda} \mathcal{E}_{h}( d\lambda ),  
\end{equation} 
where $\mathcal{E}_{h}$ is the spectral measure for $h$; that is, $\mathcal{E}_{h}(\cdot) = \langle h,  \mathcal{E}(\cdot) h \rangle_{L^2(\pi)}$  where $\mathcal{E}$ is the unique spectral decomposition measure associated with $P$. 
If $P$ is not reversible or $h \notin L^2(\pi)$, we leave $C$ undefined.    
When $h \in L^2(\pi)$, several established results describe the relationship between the $\sqrt{n}$-CLT and the quantities $A, B, C$, which we collect in the theorem below. 

\begin{theorem}\label{thm:prelim}
Let $P$ be ergodic, $(X_n)_{n \geq 0}$ be stationary and $h \in L^2(\pi)$ with $\pi(h) = 0$. 
\begin{enumerate}[label=(\roman*)]
    \item \citet{KipnisVaradhan1986}: If $P$ is reversible and $C < \infty$, then a $\sqrt{n}$-CLT exists  with $\sigma^2 = C$.
    \item \citet[Theorem 4]{HaggstromRosenthal2007}: If $P$ is reversible, then $A = B = C \in [0, \infty]$.
    %\item \citet[Theorem II.3.1]{Chen1999} and \citet[Theorem 1]{Peligrad2023}:  $(S_n / \sqrt{n})_{n \geq 1}$ is bounded in probability if and only if a $\sqrt{n}$-CLT exists. In particular, if $\sup_{n \geq 1} A_n < \infty$, then a $\sqrt{n}$-CLT exists.  
    \item \citet[Theorem 1]{Peligrad2023}: If $\sup_{n \geq 1} A_n < \infty$, then a $\sqrt{n}$-CLT exists.  
\end{enumerate}
\end{theorem}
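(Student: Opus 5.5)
Theorem~\ref{thm:prelim} collects three known results, so the plan is to sketch each classical argument in turn under the standing assumptions (ergodic $P$, stationary chain, centered $h\in L^2(\pi)$).

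For (i), we follow Kipnis--Varadhan. For $\epsilon>0$, we solve the resolvent equation $(1+\epsilon)g_\epsilon - Pg_\epsilon = h$. We then write
\[
h(X_k) = \bigl[g_\epsilon(X_{k+1}) - Pg_\epsilon(X_k)\bigr] + \bigl[g_\epsilon(X_k)-g_\epsilon(X_{k+1})\bigr] + \epsilon g_\epsilon(X_k),
\]
which splits $S_n$ into a martingale, a telescoping term and a remainder. Using the spectral measure $\mathcal{E}_h$, the finiteness of $C$ gives two facts as $\epsilon\to0$: $\epsilon\norm{g_\epsilon}^2\to0$, and $(I-P)^{1/2}g_\epsilon$ is Cauchy in $L^2(\pi)$. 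Hence the martingale increments converge in $L^2$ to a stationary ergodic martingale difference sequence. That sequence obeys the martingale CLT with variance $\norm{g}^2-\norm{Pg}^2 = C$ in the limit. The key step is to show the remainder is $o(\sqrt n)$ in $L^2$ uniformly, which is done by choosing $\epsilon=1/n$. This is the main technical obstacle, and we control it by computing the variance of the remainder spectrally; it is bounded by
\[
\int n^{-1}\Bigl(\tfrac{1}{n^{-1}+1-\lambda}\Bigr)^2 (1-\lambda)^{\cdots}\,\mathcal{E}_h(d\lambda),
\]
which tends to $0$ by dominated convergence because $\int (1-\lambda)^{-1}\,d\mathcal{E}_h<\infty$. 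The point $\lambda=-1$ is harmless since $(1+\lambda)/(1-\lambda)$ stays bounded there.

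For (ii), we use the spectral theorem:
\[
\gamma_k=\int\lambda^k\,\mathcal{E}_h(d\lambda),\qquad B_m=\int\Bigl(1+2\sum_{k=1}^m\lambda^k\Bigr)\mathcal{E}_h(d\lambda),\qquad A_n=\int\Bigl(\frac{1+\lambda}{1-\lambda}-\frac{2\lambda(1-\lambda^n)}{n(1-\lambda)^2}\Bigr)\mathcal{E}_h(d\lambda).
\]
Ergodicity gives $\mathcal{E}_h(\{1\})=0$ because $h$ is centered. The aperiodicity/irreducibility assumption also gives $\mathcal{E}_h(\{-1\})=0$; otherwise $B_m$ would oscillate. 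We then split the integrals over $[0,1)$ and $[-1,0)$. On $[0,1)$ the integrands increase monotonically to $(1+\lambda)/(1-\lambda)$, so monotone convergence applies, including when the limit is $+\infty$. On $[-1,0)$ the integrands are uniformly bounded away from $\lambda=-1$ and converge pointwise, so dominated convergence applies, using that $\mathcal{E}_h$ has no atom at $-1$ together with an alternating-series bound. Together these give $A=B=C$ in $[0,\infty]$. The delicate part is the neighbourhood of $\lambda=-1$: there $1+2\sum\lambda^k$ is bounded by $3$, which makes dominated convergence work.

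For (iii), we cite Peligrad's theorem. Its proof idea is that $\sup_n A_n<\infty$ yields, through a Maxwell--Woodroofe-type projective criterion applied along dyadic blocks, a martingale approximation with a slowly varying normalization, and $\sup_n A_n<\infty$ forces the limit normal law. We would not reprove this, but simply invoke it.
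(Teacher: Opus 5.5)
The paper does not prove this theorem: it cites Kipnis--Varadhan, H\"aggstr\"om--Rosenthal (Theorem 4) and Peligrad. It also remarks that (iii) follows from Chebyshev's inequality (so $\sup_n A_n<\infty$ gives tightness) combined with Chen's theorem that, for $h\in L^2(\pi)$, tightness of $(S_n/\sqrt n)$ is equivalent to a $\sqrt n$-CLT.

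For (iii) you do the same as the paper. Since you only invoke Peligrad's result, your description of its proof is unnecessary. For (i) and (ii) you reconstruct the standard spectral proofs, which makes those parts self-contained. The outlines are sound, but one step in (ii) is justified incorrectly.

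You assert $\mathcal{E}_h(\{-1\})=0$ because ``otherwise $B_m$ would oscillate''. That is circular, since convergence of $B_m$ is what you are proving. For a periodic chain $B_m$ really does oscillate: the two-state chain $P(x,-x)=1$ on $\{-1,1\}$ with $h(x)=x$ has $\gamma_k=(-1)^k$, so $B_m$ alternates between $1$ and $-1$, while $A_n\to 0=C$.

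The atom has to be excluded using aperiodicity, through total variation convergence. Since $\norm{P^n(x,\cdot)-\pi}_{\mathrm{TV}}\to0$ for $\pi$-a.e.\ $x$, you get $\int\lambda^n\,\mathcal{E}_h(d\lambda)=\langle h,P^nh\rangle_{L^2(\pi)}\to0$. Prove this first for bounded centered $h$, then extend by $L^2$ approximation, using that $P$ is a contraction. Because $\int\lambda^{2n}\,\mathcal{E}_h(d\lambda)\to\mathcal{E}_h(\{-1,1\})$ by dominated convergence, both atoms vanish. This is exactly Lemma~\ref{lem:direct-endpoints} in the appendix. The atom at $-1$ matters only for $B$: the $A_n$ integrand tends to $0$ at $\lambda=-1$, and (i) needs no aperiodicity at all.

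Your remainder bound in (i) leaves the exponent unspecified. A correct version, with $\epsilon=1/n$, is
$n^{-1}\E[(\epsilon\sum_{k<n}g_\epsilon(X_k))^2]=n^{-3}\int K_n(\lambda)(n^{-1}+1-\lambda)^{-2}\,\mathcal{E}_h(d\lambda)$, where $K_n(\lambda)=\sum_{j,k<n}\lambda^{|j-k|}$.
On $[0,1)$, use $K_n\le\min\{n^2,2n/(1-\lambda)\}$ to bound the integrand by $2/(1-\lambda)$, which is $\mathcal{E}_h$-integrable because $C<\infty$. On $[-1,0)$ the integrand is at most $(n+4)/n^3$. So dominated convergence applies as you intended.
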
 
Part (iii) is a direct consequence of~\citet[Theorem II.3.1]{Chen1999}, which shows that for ergodic and stationary chains with $h\in L^2(\pi)$, $(S_n / \sqrt{n})_{n \geq 1}$ is tight if and only if a $\sqrt{n}$-CLT exists.

\subsection{HR Problems and Solutions}

\citet[Section 5]{HaggstromRosenthal2007} posed seven open problems.  
Below we describe each of them and the corresponding solution we obtain.  Throughout, we assume that $P$ is ergodic. (Although this  assumption is not explicitly stated in HR Problems 5 and 6, it appears to be implicit.) 

\subsubsection*{HR Problem 1}
 
\begin{openproblem}
If the Markov chain is ergodic, but not necessarily reversible nor geometrically ergodic nor uniformly ergodic, does it necessarily follow that $A = B$ (allowing that they may both be infinite)? What if we assume that $P = P_1 P_2$ where each $P_i$ is reversible? 
\end{openproblem}

We interpret the question as asking whether the conclusion holds for $h \in L^2(\pi)$, since Example~11 of~\citet{HaggstromRosenthal2007} already provides a counterexample where $A = \infty$ and $B$ is undefined due to $\gamma_0 = \infty, \gamma_1 = -\infty$. 
\citet{HaggstromRosenthal2007} proved that for $h \in L^2(\pi)$, $A=B$  if the chain is reversible and ergodic, or if the chain is uniformly ergodic (see their Remark 8). In particular, we have $A = B$ on any finite state space since ergodicity would imply uniform ergodicity. So HR Problem 1 is primarily motivated by the role of reversibility in this result.  Since the convergence of $B_n$ implies that $A_n$ converges to the same limit, a counterexample must have a divergent sequence $(B_n)_{n \geq 0}$. 
%\citep{Davydov1973} has A=B=\infty. I cannot access this paper. 

\begin{solution}
  We prove in Theorem~\ref{thm:hr1-counterexample} that both assertions fail  with $h \in L^2(\pi)$, even if $P = P_1 P_2$ with $P_i$ reversible. Counterexamples on countable spaces are constructed, where $A < \infty$ exists, $|\gamma_k| < \infty$ for every $k$, but $B$ does not exist since $\liminf_{n \rightarrow \infty} B_n <\limsup_{n \rightarrow \infty} B_n = \infty$; the $\sqrt{n}$-CLT also holds for our construction. 
  The main technical contribution of our proof is a general device, presented in Theorem~\ref{thm:representation}, for realizing the Fourier coefficients of a given function as the autocovariances of a regenerative Markov chain with finite excursions. 
  Proposition~\ref{prop:general-excursion} gives the construction of the chain, and Proposition~\ref{lem:autocorrelation-decomposition} records the key analytic decomposition result needed to apply it.  
  Once this realization device is established, the problem reduces to applying classical results and counterexamples in Fourier analysis. 
\end{solution}

\subsubsection*{HR Problem 2}

\begin{openproblem}
Is there a reversible, ergodic Markov chain where a $\sqrt{n}$-CLT exists, but $A = \infty$?
\end{openproblem}

For $h \in L^2(\pi)$ with $A = \infty$, several CLT results have also been established, but with a normalization different from $\sqrt{n}$~\citep{zhao:woodroofe:2010, longla:peligrad:2012}. 
\citet{bradley:2026} also provided multiple examples where $A = \infty$ but the $\sqrt{n}$-CLT fails.  
For $h \notin L^2(\pi)$, Example 14 of~\citet{HaggstromRosenthal2007} has a $\sqrt{n}$-CLT, while $A$ is undefined rather than infinite~\citep[Remark 15]{HaggstromRosenthal2007}. 
A negative answer to HR Problem~2  would demonstrate that all these phenomena are not accidental.  
Compared to the existing results that use $\sup_{n \geq 1} A_n < \infty$  or $C < \infty$ to establish the $\sqrt{n}$-CLT (see Theorem~\ref{thm:prelim}), the challenge is to proceed in the converse direction and extend the analysis beyond the condition $h \in L^2(\pi)$. 

\begin{solution}
    We prove in Theorem~\ref{thm:P2} that the answer is negative.  In particular, we show that 
    \begin{equation}
        \sup_{n \geq 1} \frac{1}{2n} \Var_{\pi} (S_{2n}) \leq 2 \sigma^2,  
    \end{equation}
    which implies that $A$ cannot be infinite. 
    Combined with Lemma~\ref{lem:inf-A-h2}, this shows that $A$ must be undefined if a $\sqrt{n}$-CLT exists for $h \notin L^2(\pi)$.  
    As in existing techniques to showing a Markov chain CLT~\citep{Wu1999, longla:2017}, we construct a function $d$ such that for $k \ge 1$ 
    \begin{equation}\label{eq:adjacent-pair-decomp}
        h(X_{k-1}) + h(X_{k}) = d(X_{k-1}, X_{k}) + d(X_{k}, X_{k-1}),
    \end{equation} 
    and both $d(X_{k-1}, X_{k})$ and $d(X_{k}, X_{k-1})$ are square-integrable martingale difference sequences. 
    However, existing theories rely on square-integrability assumptions of $h$, while we require only $h \in L^1_0(\pi)$, making the construction of a function $d$ a central technical challenge. Our key innovation is to construct $d$ as the limit of a sequence of bounded functions with uniform $L^2$ control. 
    The sequence is built using the resolvent of the characteristic function of $S_{n}$ and the CLT is used to choose the correct scaling that yields~\eqref{eq:adjacent-pair-decomp}; see Proposition~\ref{prop:key-representation}. 
    This decomposition reveals the hidden square integrability created by cancellation of adjacent observations.  
    In fact, we show that this result can be proved under the strictly weaker assumption that $(S_n/\sqrt{n})_{n \geq 1}$ is tight. 
\end{solution} 

\subsubsection*{HR Problem 3}

\begin{openproblem}
Is there a reversible, ergodic Markov chain where a $\sqrt{n}$-CLT exists, but where Roberts' condition $\lim_{n \to \infty} n \E\left[ h(X_0)^2 r(X_0)^n \right] = \infty$ holds?
\end{openproblem}

By Lemma~\ref{lem:inf-A-roberts},  $A = \infty$ whenever Roberts' condition holds. However, it does not necessarily preclude a $\sqrt{n}$-CLT, since weak convergence alone does not imply convergence of moments.  

\begin{solution}
We prove in Theorem~\ref{thm:holding-noise-bound} that the answer is negative without assuming reversibility and with tightness of $(S_n / \sqrt{n})_{n \geq 1}$ in place of the $\sqrt{n}$-CLT assumption. 
Hence, Roberts' condition does rule out a $\sqrt{n}$-CLT for reversible and nonreversible ergodic chains. 
The key idea, which we isolate in Proposition~\ref{prop:holding-contraction}, is to condition on the successive distinct states visited and integrate out the holding times at those states. These holding times are conditionally independent geometric random variables, and their fluctuations cause the modulus of the conditional characteristic function to decay multiplicatively. 
Under the CLT assumption, we obtain a lower bound on $\sigma^2$ in terms of the holding probabilities. 
\end{solution}

\subsubsection*{HR Problem 4}

\begin{openproblem}
If the chain is reversible and ergodic and a $\sqrt{n}$-CLT exists, what further conditions (weaker than i.i.d. or uniformly ergodic) still imply that $A < \infty$?  
\end{openproblem}

This question is open-ended. Various sufficient conditions  ensuring $A< \infty$ without assuming the CLT exist  in the literature, including the variance-bounding property~\citep{roberts:rosenthal:2008} and spectral criteria~\citep{deligiannidis:etal:2015}.  

\begin{solution} 
Our solution to HR Problem 2, together with Theorem 4 of~\citep{HaggstromRosenthal2007}, implies that $h \in L^2(\pi)$ is a necessary and sufficient condition; see Corollary~\ref{thm:HR4}.  
\end{solution}

\subsubsection*{HR Problem 5}

\begin{openproblem}
In particular, if the chain is  reversible (and ergodic) and a $\sqrt{n}$-CLT exists, and there is $\delta$ > 0 such that $r(x) \geq \delta$ for all $x \in \X$, does this imply that $A < \infty$? 
\end{openproblem}

\begin{solution}
We give an affirmative answer in Corollary~\ref{thm:HR5}. This result follows readily from our solutions to HR Problems 2 and 3. 
\end{solution}

\subsubsection*{HR Problem 6}

\begin{openproblem} 
If the Markov chain is reversible (and ergodic) and a $\sqrt{n}$-CLT exists, and $\pi(h^2) < \infty$, does this imply that $A < \infty$?
\end{openproblem}  

Under the assumptions of the problem, we have $A = B = C$, though they may all be infinite~\citep[Theorem 4]{HaggstromRosenthal2007}. 
Hence, this problem  essentially asks whether a converse to the Kipnis--Varadhan theorem holds.  
See the preceding discussion of HR Problem 2 for other related results.

\begin{solution}
Our solution to HR Problem 2 shows that the answer is affirmative; see Corollary~\ref{thm:HR6}.  
Combined with Theorem 4 of~\citet{HaggstromRosenthal2007} and the Kipnis--Varadhan theorem, this shows that for ergodic and reversible Markov chains, if $h \in L^2(\pi)$, then either a $\sqrt{n}$-CLT holds with $\sigma^2 = A = B = C < \infty$, or $(S_n / \sqrt{n})_{n \geq 1}$ is not tight and $A = B = C = \infty$. 
We  give an alternative, more constructive proof in Section~\ref{sec:P6} that reverses the standard arguments used to establish Markov chain CLTs. In particular, we show that tightness forces the sum over a regeneration cycle to have a finite second moment, though the length of each cycle may not be square-integrable. 
This leads to an upper bound on the spectral variance $C$. 
\end{solution}

\subsubsection*{HR Problem 7}

\begin{openproblem} 
Can the condition $\pi(h^2) < \infty$ be dropped from Corollaries 6 and 7? 
\end{openproblem}  

For readers' convenience, here we include the relevant corollary statements from~\citet{HaggstromRosenthal2007}. 

\medskip 
\noindent\textbf{HR Corollary 6.}
\textit{If $P$ is reversible and ergodic, and $\pi(h^2) < \infty$, and any one of $A, B,$ and $C$ is finite, then a $\sqrt{n}$-CLT exists for $h$, with $\sigma^2 = A = B = C < \infty$.}

\medskip 
\noindent\textbf{HR Corollary 7.}
\textit{If $P$ is reversible and ergodic, and $\lim_{n \rightarrow \infty} n \E_\pi[ h^2(X_0) r(X_0)^n ] = \infty$ where $\pi(h^2) < \infty$, then $A = B = C = \infty$.} 

\medskip 

The spectral quantity $C$ is defined only when $h \in L^2(\pi)$, and $B < \infty$ already implies $\gamma_0 = \pi(h^2) < \infty$.  So for HR Corollary 6, dropping the $L^2$ assumption is nontrivial only when $A < \infty$.  

For HR Corollary 7, Lemma~\ref{lem:inf-A-roberts} shows that Roberts' condition forces $A = \infty$. Since $C$ is not defined outside $L^2(\pi)$, the remaining question is whether Roberts' condition also forces $B = \infty$. 
Because $\gamma_0 = \pi(h^2) = \infty$ when $h \notin L^2(\pi)$, a counterexample must have $B$ undefined. This can occur either because some $\gamma_k$ is undefined or because $B_n$ involves $\infty-\infty$; we consider the latter, more substantive case. 
We also note that Roberts' condition holds trivially if $\E_{\pi}\bigl[h(X_0)^2r(X_0)^n\bigr] = \infty$ for all large $n$. However, Roberts' condition may also hold  when 
\begin{equation}\label{eq:roberts-nontrivial}
    \E_{\pi}\bigl[h(X_0)^2r(X_0)^n\bigr] < \infty, \text{ for every } n \geq 1. 
\end{equation}
We impose~\eqref{eq:roberts-nontrivial} as an additional requirement. 

\begin{solution}
By Lemma~\ref{lem:inf-A-h2}, the condition $\pi(h^2) < \infty$ can be removed from HR Corollary 6. 
For HR Corollary 7, we prove in Theorem~\ref{thm:hr7-counterexample} that the condition $\pi(h^2) < \infty$ cannot be removed. 
A counterexample is given, where $h \in L^1_0(\pi) \setminus L^2(\pi)$, both~\eqref{eq:roberts-nontrivial} and Roberts' condition hold, $A = \infty$, but $B$ is undefined due to $\gamma_0 = \infty$ and $\gamma_1 = - \infty$. 
The main idea is to introduce two transition mechanisms, where one mechanism produces $\gamma_0 = \infty, \gamma_1 = - \infty$, and the other ensures that Roberts' condition holds nontrivially. The  parameters of the two mechanisms are chosen so that each mechanism generates its intended effect without interfering with the other. 
\end{solution}

\subsection{Main Theorem} \label{sec:main}

We summarize our main results and their consequences in the following theorem. Detailed proofs for HR Problems are presented in subsequent sections, and some standard results used in the proofs are collected in the Appendix.  

\begin{theorem}\label{thm:main}
    Let $(X_n)_{n \geq 0}$ be a stationary, ergodic Markov chain  with transition kernel $P$ and invariant probability measure $\pi$. 
    Let $h \in L^1_0(\pi)$, and define $S_n = \sum_{k=0}^{n-1}h(X_k)$.  Let $A, B, C$ be given by~\eqref{eq:def-A-B-1},~\eqref{eq:A-B-definition} and~\eqref{eq:def-C}. The following results hold. 
    \begin{enumerate}[label=(\roman*)]
        \item If $P$ is reversible, then exactly one of the following four scenarios occurs. 
        \begin{enumerate}[label=(\alph*)]
        \item $h \in L^2(\pi)$, a $\sqrt{n}$-CLT exists for $h$ with $\sigma^2 = A = B = C < \infty$; 
        \item $h \in L^2(\pi)$, $(S_n / \sqrt{n})_{n \geq 1}$ is not tight, and $A = B = C = \infty$;
        \item $h \notin L^2(\pi)$, a $\sqrt{n}$-CLT exists for $h$, $A$ is undefined since $\liminf_{n \rightarrow \infty} A_n = \sigma^2$ and $\limsup_{n \rightarrow \infty} A_n = \infty$, 
        $B$ is undefined since $\gamma_{2k} = \infty$ and $\gamma_{2k+1} = -\infty$ for each $k \geq 0$; 
        \item $h \notin L^2(\pi)$, $(S_n / \sqrt{n})_{n \geq 1}$ is not tight, and neither $A$ nor $B$ is finite.  
        \end{enumerate} 
        \item If Roberts' condition~\eqref{eq:roberts-cond} holds, then $A = \infty$ and $(S_n / \sqrt{n})_{n \geq 1}$ is not tight, but $B$ may be undefined due to $\gamma_0 = \infty$ and $\gamma_1 = -\infty$. 
        \item There exist  a nonreversible $P$ and $h \in L^2(\pi)$ such that $A < \infty$, and $B$ is undefined since $\liminf_{n\to\infty}B_n < \infty$ and $\limsup_{n\to\infty}B_n = \infty$. 
    \end{enumerate}
\end{theorem}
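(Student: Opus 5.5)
The proof of Theorem~\ref{thm:main} is an assembly of the section-level results announced in the Resolutions, combined with Theorem~\ref{thm:prelim} and Lemmas~\ref{lem:inf-A-h2}--\ref{lem:inf-A-roberts}. I take the tightness dichotomy first and then treat parts (i)--(iii) in turn.

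\textbf{Part (i).} The four scenarios are mutually exclusive and exhaustive once we establish the dichotomy ``$\sqrt n$-CLT or non-tight''. For this, Theorem~\ref{thm:P2} (tightness gives the decomposition~\eqref{eq:adjacent-pair-decomp} via Proposition~\ref{prop:key-representation}) shows the following: tightness of $(S_n/\sqrt n)$ makes $S_{2n}$ a sum of forward and backward square-integrable martingale increments. A martingale CLT then yields convergence along even times. Odd times differ by a single term $h(X_{2n})/\sqrt n\to0$ in probability, so the full sequence converges.

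In case (a), $h\in L^2(\pi)$ and the CLT holds. Corollary~\ref{thm:HR6} gives $A<\infty$. Theorem~\ref{thm:prelim}(ii) then gives $A=B=C$, and Theorem~\ref{thm:prelim}(i) identifies $\sigma^2=C$. In case (b), non-tightness forces $\sup_n A_n=\infty$, since bounded $A_n$ implies tightness. Since $A=B=C$ exists in $[0,\infty]$ by Theorem~\ref{thm:prelim}(ii), it must equal $\infty$.

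In case (c), I use the bound $\sup_n \Var_\pi(S_{2n})/(2n)\le 2\sigma^2$ from Theorem~\ref{thm:P2}, sharpened to $\liminf A_{2n}=\sigma^2$. The $\le$ direction should come from $L^2$ convergence of the normalized martingale parts. The $\ge$ direction should come from Fatou applied to weak convergence. Lemma~\ref{lem:inf-A-h2} forces $A_{2n+1}=\infty$, so $\limsup A_n=\infty$. For the autocovariances, square-integrability of $h(X_0)+h(X_1)$ gives $\gamma_0+\gamma_1$ finite formally, which makes $\gamma_1=-\infty$. More generally, reversibility gives $\gamma_{j}=\E_\pi[h\,P^{j}h]$. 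Square-integrability of $h(X_{k-1})+h(X_k)$, combined with $\gamma_{2k}=\E_\pi[(P^kh)^2]=\infty$, should propagate the signs: $\gamma_{2k}=+\infty$ and $\gamma_{2k+1}=-\infty$. I must check that $P^kh\notin L^2$ for every $k$; I expect this to follow because $h+Ph\in L^2$ implies $P^kh\equiv (-1)^kh$ modulo $L^2$.

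In case (d), $A<\infty$ would give $L^2$ by Lemma~\ref{lem:inf-A-h2}, a contradiction. Likewise $B<\infty$ would require $\gamma_0<\infty$, again contradicting $h\notin L^2(\pi)$.

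\textbf{Part (ii).} Lemma~\ref{lem:inf-A-roberts} gives $A=\infty$. Theorem~\ref{thm:holding-noise-bound} gives non-tightness without reversibility: the holding-time contraction of Proposition~\ref{prop:holding-contraction}, applied to characteristic functions, contradicts tightness under Roberts' condition. The ``may be undefined'' clause is witnessed by the counterexample of Theorem~\ref{thm:hr7-counterexample}.

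\textbf{Part (iii).} This is Theorem~\ref{thm:hr1-counterexample}, built from Theorem~\ref{thm:representation} and Proposition~\ref{prop:general-excursion}. Choose $f$ strictly positive, continuous, even, with Fourier partial sums at $0$ unbounded above but Fej\'er means converging to $f(0)$; a du Bois-Reymond-type function adjusted to be positive works. Then $B_n$ are the partial sums and $A_n$ the Fej\'er means, by~\eqref{eq:cesaro-identity}.

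The main obstacle is case (c). It needs the exact $\liminf A_n=\sigma^2$ rather than only the upper bound $2\sigma^2$, and it needs the sign pattern of all $\gamma_k$ to be rigorous, including that each $\gamma_k$ is well defined as an extended real.
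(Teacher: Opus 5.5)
Your assembly for (i)(a), (i)(b), (i)(d), (ii) and (iii) is the paper's: Theorem~\ref{thm:P2} for the dichotomy, Theorem~\ref{thm:prelim} and Corollary~\ref{thm:HR6} for the $L^2$ cases, Lemma~\ref{lem:inf-A-h2} for (d), Lemma~\ref{lem:inf-A-roberts} with Theorems~\ref{thm:holding-noise-bound} and~\ref{thm:hr7-counterexample} for (ii), and Theorem~\ref{thm:hr1-counterexample} for (iii). In (iii), $\liminf B_n<\infty$ is automatic, because $A_n$ is the Ces\`aro mean of the $B_m$.

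\textbf{The gap: the sign pattern of the $\gamma_k$ in case (c).} You leave this open, and the route you sketch would not close it. The identity $\gamma_{2k}=\E_\pi[(P^kh)^2]$ presupposes that $h(X_0)h(X_{2k})$ is quasi-integrable, which is exactly what must be shown. Condition on $X_k$ and put $a=\E[h^+(X_0)\mid X_k]$, $b=\E[h^-(X_0)\mid X_k]$. By reversibility and the Markov property, $h(X_0)$ and $h(X_{2k})$ are conditionally i.i.d. Hence the positive and negative parts of $h(X_0)h(X_{2k})$ have expectations $\E_\pi[a^2+b^2]$ and $2\E_\pi[ab]$. Your (correct) observation $P^kh\equiv(-1)^kh$ modulo $L^2$ gives $\E_\pi[(a-b)^2]=\infty$, but it says nothing about $\E_\pi[ab]$. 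If $\E_\pi[ab]=\infty$, then $\gamma_{2k}$ would be undefined. The same objection applies to ``$\gamma_0+\gamma_1$ finite formally,'' which is meaningless when $\gamma_0=\infty$.

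\textbf{The fix: a short pathwise argument (Corollary~\ref{coro:P2}).} By Theorem~\ref{thm:P2} with $n=1$ and stationarity, $h(X_i)+h(X_{i+1})\in L^2$ for every $i$. Telescoping then gives $h(X_0)-(-1)^jh(X_j)=\sum_{i=0}^{j-1}(-1)^i[h(X_i)+h(X_{i+1})]\in L^2$.
\begin{itemize}
\item For even $j$: $(h(X_0)h(X_j))^-\le\frac14(h(X_0)-h(X_j))^2$ is integrable. But $h(X_0)^2+h(X_j)^2=(h(X_0)-h(X_j))^2+2h(X_0)h(X_j)$ is not, so the positive part is non-integrable and $\gamma_j=+\infty$.
\item For odd $j$: use $(uv)^+\le\frac14(u+v)^2$ and $u^2+v^2=(u+v)^2-2uv$ to get $\gamma_j=-\infty$.
\end{itemize}
This settles well-definedness and signs together, with no spectral identity needed.

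\textbf{Two smaller points.} Neither affects the conclusion, since you cite Theorem~\ref{thm:P2} anyway.
\begin{itemize}
\item The limit $A_{2n}\to\sigma^2$ is already part of Theorem~\ref{thm:P2}. Your sketched ``$\le$'' direction would not work as stated. The normalized martingale sums converge only in distribution, not in $L^2$, and their norms give only the bound $2\sigma^2$. The paper instead uses uniform integrability of $S_{2n}^2/n$, inherited from $J_n^2$ and $(J_n^*)^2$.
\item Separate martingale CLTs for the forward and backward parts do not give a CLT for their sum, since there is no joint convergence. Theorem~\ref{thm:P2} instead gets the CLT from the linear bound $\Var_\pi(S_{2n})\le 4n\norm{d}^2_{L^2(\Gamma)}$. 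It applies Theorem~\ref{thm:prelim}(iii) to the pair chain $(X_{2k},X_{2k+1})$.
\end{itemize}
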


\begin{proof}
If $h \in L^2(\pi)$, then $A = B = C \in [0, \infty]$, and if any one of them is finite, then a $\sqrt{n}$-CLT holds with $\sigma^2 = A < \infty$~\citep[Theorem 4, Corollary 6]{HaggstromRosenthal2007}. 
Further, by Theorem~\ref{thm:P2}, $A = \infty$ implies that $(S_n / \sqrt{n})_{n \geq 1}$ is not tight, which proves the two possible scenarios when $h \in L^2(\pi)$. 
If $h \notin L^2(\pi)$, Lemma~\ref{lem:inf-A-h2} implies that $A$ cannot be finite, and since $\gamma_0 = \pi(h^2) = \infty$,   $B$ cannot be finite either. 
By Theorem~\ref{thm:P2}, tightness of $(S_n/\sqrt{n})_{n \geq 1}$ implies a $\sqrt{n}$-CLT, and the characterization of $A, B$ when a $\sqrt{n}$-CLT exists follows from Corollary~\ref{coro:P2}. This proves part (i).

Part (ii) follows from Lemma~\ref{lem:inf-A-roberts}, Theorem~\ref{thm:holding-noise-bound} and Theorem~\ref{thm:hr7-counterexample}. 

Part (iii) follows from Theorem~\ref{thm:hr1-counterexample}. 
\end{proof}
 
\section{HR Problems 2, 4 and 6: Implications of a Reversible CLT}\label{sec:P2}

\subsection{Main Result and Proof}

\begin{theorem}[Solution to HR Problem 2]\label{thm:P2}
Let $P$ be reversible and ergodic and $(X_n)_{n \ge 0}$ be stationary. 
Let $h\in L^1(\pi)$ with $\pi(h) = 0$.  
If $(S_n / \sqrt{n})_{n \geq 1}$ is bounded in probability, then $S_n / \sqrt{n} \Rightarrow \Normal (0, \sigma^2)$ for some $0 \leq \sigma^2 < \infty$, and  
\begin{equation}
 \sup_{n \geq 1} A_{2n} \leq 2 \sigma^2,  \qquad 
 \lim_{n \rightarrow \infty} A_{2n} = \sigma^2,
 \label{eq:even-bound-main}
\end{equation}
where $A_n = \Var_\pi(S_n) / n$. 
When $\sigma^2=0$,  $S_{2n}=0$ almost surely for every $n \geq 1$.
\end{theorem}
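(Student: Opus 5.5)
The plan is to build, via the resolvent, a function $g$ on $\X$ with $(I+P)g = h$ in a suitable sense, and then to write the paired sums $h(X_{k-1})+h(X_k)$ as sums of square-integrable martingale differences. The starting point is to work with the bounded functions $\phi_t(x) = \E_x[e^{itS_n}]$ with $t = s/\sqrt{n}$, or more directly with the resolvent $R_\lambda = \sum_{j\ge 0}(-\lambda P)^j$ for $\lambda\uparrow 1$. Reversibility makes $P$ self-adjoint on $L^2(\pi)$, and the obstacle is that $h\notin L^2(\pi)$. To get around it, truncate: set $h_N = h\one_{|h|\le N} - \pi(h\one_{|h|\le N})$. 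For the bounded centered $h_N$ one has $h_N(X_{k-1})+h_N(X_k) = [(I+P)g](X_{k-1}) - \ldots$, which is of Gordin form. Define $g_{N,\lambda} = (I+\lambda P)^{-1}h_N$, which is well defined on $L^2$ for $\lambda<1$ because the spectrum of $-\lambda P$ avoids $-1$. Then put
\[
d_{N,\lambda}(x,y) = g_{N,\lambda}(y) - \lambda (Pg_{N,\lambda})(x).
\]
With this choice $d(X_{k-1},X_k)$ is a forward martingale difference. By reversibility, $d(X_k,X_{k-1})$ is a backward martingale difference, and their sum equals $h_N(X_{k-1})+h_N(X_k)$ up to an error of order $(1-\lambda)$.

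The key step, which I expect to be the main obstacle, is a uniform $L^2$ bound
\[
\sup_{N,\lambda}\E_\pi[d_{N,\lambda}(X_0,X_1)^2] \lesssim \sigma^2,
\]
obtained from tightness alone. The plan is to express $\E_\pi[d^2] = \langle g,g\rangle - \lambda^2\langle Pg,Pg\rangle$ spectrally, $\int (1-\lambda^2\mu^2)(1+\lambda\mu)^{-2}\,\mathcal{E}_{h_N}(d\mu)$, which is comparable to $\int \frac{1-\lambda\mu}{1+\lambda\mu}\,\mathcal E_{h_N}(d\mu)$. This is the spectral variance of the ``two-step'' sums. I would control it using characteristic functions. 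Tightness of $S_n/\sqrt n$ lets one choose a scaling $t\asymp 1/\sqrt n$ at which $\E_\pi[\cos(tS_{2n})]$ stays bounded away from $0$. Writing $S_{2n}$ as the sum of the $n$ pairs $h(X_{2j})+h(X_{2j+1})$ and using the martingale decomposition for the truncated pairs, a lower bound on $|\E e^{itS_{2n}}|$ should translate, via $1-\cos$ inequalities and the resolvent applied to the characteristic functional, into an upper bound on the pair variance $\Var(S_{2n}^{(N)})/(2n)$ that holds uniformly in $N$. Letting $N\to\infty$ by weak compactness in $L^2$, I obtain a limit $d$ satisfying \eqref{eq:adjacent-pair-decomp}, and in particular $h(X_0)+h(X_1)\in L^2$. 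If the bound via characteristic functions proves too lossy, the fallback is a Chen-type argument (Theorem~\ref{thm:prelim}(iii)) applied to the two-step chain $P^2$ with the function $h+Ph$, once that function is shown to lie in $L^2$.

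Given $d$, the rest follows. We have
\[
S_{2n} = \sum_{k \text{ odd}\le 2n-1}\bigl[d(X_{k-1},X_k)+d(X_k,X_{k-1})\bigr],
\]
a sum of a forward and a backward square-integrable stationary martingale. Then $\Var(S_{2n}) \le 2\bigl(\Var M_n+\Var M_n'\bigr) = 4n\,\E d^2$. Identifying $\E d^2$ with $\sigma^2$, via the martingale CLT and the fact that the limit variance is given by the weak limit, yields $A_{2n}\le 2\sigma^2$. Moreover, $A_{2n}\to\sigma^2$ because the cross-covariance between the forward and backward parts, divided by $n$, converges; this can be seen spectrally from $(1-\lambda)$-regularization. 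The martingale CLT together with the ergodic theorem gives the CLT along even $n$. Odd $n$ differ only by $h(X_{n-1})/\sqrt n\to 0$ in probability, since $h\in L^1$. Finally, if $\sigma^2=0$, the bound forces $\Var(S_{2n})=0$, and centering gives $S_{2n}=0$ almost surely.
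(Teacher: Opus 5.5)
There is a genuine gap, and it is the step you flag as the main obstacle. It starts with a sign error. With $g=(I+\lambda P)^{-1}h_N$ and $d(x,y)=g(y)-\lambda(Pg)(x)$ one has exactly $d(x,y)-d(y,x)=h_N(y)-h_N(x)$. By contrast, $d(x,y)+d(y,x)=2\{g(x)+g(y)\}-h_N(x)-h_N(y)$. So you have decomposed increments, not adjacent sums. Correspondingly, the energy of this $d$ tends, as $\lambda\uparrow1$, to $\int\frac{1-\mu}{1+\mu}\,\mathcal{E}_{h_N}(d\mu)$. That is the asymptotic variance of the alternating sums $\sum_k(-1)^kh(X_k)$, not of any two-step sum, and tightness of $S_n/\sqrt n$ does not control it. In the paper's two-state example ($P(x,x)=\delta$, $h(x)=x$) this limit is $(1-\delta)/\delta$, while $\sigma^2=\delta/(1-\delta)$. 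Worse, when $h\notin L^2(\pi)$ your uniform bound is impossible along any sequence $N\to\infty$. By reversibility and Fatou it would give $h(X_1)-h(X_0)\in L^2$. The statement you are proving implies $h(X_0)+h(X_1)\in L^2$ (via $A_2\le 2\sigma^2$), and together these force $h\in L^2(\pi)$.

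Switching to the Poisson resolvent $g=(I-\lambda P)^{-1}h_N$, with the same $d$, repairs the algebra, since then $d(x,y)+d(y,x)=h_N(x)+h_N(y)$. But the energy is then at least $\bigl(\frac{1-\lambda}{1+\lambda}\bigr)^2\pi(h_N^2)$, so again no bound uniform in $(N,\lambda)$ exists. You also give no mechanism that turns tightness of the untruncated $S_n/\sqrt n$ into second-moment control of truncated quantities, and truncation can destroy exactly the adjacent-pair cancellation the theorem is about. Your fallback assumes $h+Ph\in L^2(\pi)$ without any route to it, so it does not bypass this.

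The missing idea, which your opening line gestures at, is to apply the resolvent to the bounded characteristic functional rather than to a truncation of $h$. Take $\theta=t\sqrt p$, $f_p(x)=\E_x[e^{i\theta\tilde S_{\tilde N_p}}]$, $g_p=e^{i\theta h}f_p$ and $c_p=\pi(f_p)$. The paper's $D_p(x,y)=\{g_p(y)-(Pg_p)(x)\}/(i\theta c_p)$ are exact martingale differences, and an exact computation gives $\norm{D_p}_{L^2(\Gamma)}^2\le2\abs{1-c_p}/\{t^2(1-p)^2\abs{c_p}\}$. Since $c_p=p+(1-p)\phi_p(t)$, with $\phi_p$ the characteristic function of $\sqrt p\,S_{N_p}$:
\begin{itemize}
\item tightness bounds this energy for a suitable $t$;
\item the CLT makes the bound tend to exactly $\sigma^2$;
\item ergodicity ($f_p(x)-c_p\to0$) gives $D_p(x,y)+D_p(y,x)\to h(x)+h(y)$ pointwise;
\item a weak $L^2$ limit then yields $d$ with $\E_\pi[d^2]\le\sigma^2$.
\end{itemize}

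Your downstream steps also need different tools. Separate martingale CLTs for the forward and backward sums do not determine the law of their sum. So they give neither the CLT along even $n$ nor the identity $\E_\pi[d^2]=\sigma^2$. The paper obtains the CLT from part (iii) of Theorem~\ref{thm:prelim}, applied to the pair chain $(X_{2k},X_{2k+1})$ with $H(x,y)=h(x)+h(y)\in L^2(\Gamma)$, and then needs only the inequality $\E_\pi[d^2]\le\sigma^2$. Finally, $A_{2n}\to\sigma^2$ cannot be argued spectrally when $h\notin L^2(\pi)$, because $h$ then has no spectral measure. It follows instead from uniform integrability of the squared normalized martingale sums, together with the CLT for $S_{2n}$.
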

 
We aim to show that the non-square-integrable contribution  must cancel within adjacent pairs. 
Our starting point is the forward-backward martingale decomposition that has been used previously to control $h(X_0) + h(X_1)$ \citep{Wu1999,  zhao:woodroofe:2008, longla:2017}. But those works had different motivations: they typically start with the assumption $h \in L^2(\pi)$ to establish a CLT. 
Here we work in the converse direction, where a CLT or tightness is assumed to hold, and we want to recover the square integrability of $h(X_0) + h(X_1)$, the one-step change of $S_{2n}$. 
This is accomplished through bounded ``characteristic resolvent" functions that capture the fluctuations of the entire Markovian dynamics and provide a uniformly bounded $L^2$-approximation to $h(X_0) + h(X_1)$.
Related characteristic resolvent constructions have also appeared in other contexts, for example in the large deviation theory~\citep{kontoyiannis:meyn:2003}. 

Define a joint coupling $\Gamma$ on $\X \times \X$ by
\begin{equation}
    \Gamma(dx, dy) = \pi(dx) P(x, dy).
\end{equation}
Since we assume $(X_n)_{n \geq 0}$ is stationary, $(X_0, X_1) \sim \Gamma$.  
The following two propositions are the key construction and novelty of our approach, which we prove in Section~\ref{sec:prop-key-representation}.  

\begin{proposition} \label{prop:key-representation}
If $(S_n / \sqrt{n})_{n \geq 1}$ is bounded in probability, then there exists a real-valued function $d \in L^2(\Gamma)$ such that
\begin{align}
& \E_\pi[d (X_0, X_1)^2 ] < \infty, \\ 
& \E[d (X_0, X_1) \mid X_0] = 0 \quad \text{$\pi$-almost surely},  \\ 
&h(x) + h(y) = d (x , y) + d (y, x)
\quad\text{for $\Gamma$-almost every }(x,y) \in \X \times \X.
\label{eq:H_representation}
\end{align} 
\end{proposition}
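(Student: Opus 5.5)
The plan is to mimic the classical forward--backward martingale decomposition, but with bounded ``characteristic'' surrogates in place of $h$ and of the solution of the Poisson equation. The algebra comes first. For reversible $P$ and a function $g$ with $h=(I-P)g$, set $d(x,y)=g(y)-(Pg)(x)$. Then $\E[d(X_0,X_1)\mid X_0]=0$ and
\[
d(x,y)+d(y,x)=g(x)-(Pg)(x)+g(y)-(Pg)(y)=h(x)+h(y).
\]
By reversibility, $\E_\Gamma[d^2]=\norm{g}^2-\norm{Pg}^2=\langle g,(I-P^2)g\rangle$. Neither $g$ nor even $h$ is in $L^2(\pi)$ in general, so I would run this identity on regularized objects and pass to a limit in $L^2(\Gamma)$.

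\emph{Step 1: approximating functions.} For $t>0$ and $s\in(0,1)$, define the twisted kernel $(P_tf)(x)=e^{ith(x)}(Pf)(x)$. It is a contraction on bounded functions, and $\E_\pi[e^{itS_n}]=\pi(P_t^n\one)$. Let $G_{t,s}=\sum_{n\ge0}s^nP_t^n\one$ be the characteristic resolvent, which is bounded by $(1-s)^{-1}$. The aim is to extract from it a bounded real function $g_{t,s}$ with two properties. First, $(I-P)g_{t,s}=h_{t,s}$, where $h_{t,s}$ is a bounded proxy for $h$ such as $\operatorname{Im}(e^{ith}-1)/t$, corrected by the resolvent error terms. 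Second, $h_{t,s}\to h$ pointwise as $t\to0$.

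The natural candidate is $g_{t,s}=\operatorname{Im}\bigl(G_{t,s}\bigr)/t$, or a logarithmic-derivative variant. Then set
\[
d_{t,s}(x,y)=g_{t,s}(y)-(Pg_{t,s})(x)+e_{t,s}(x,y),
\]
where $e_{t,s}$ absorbs the error in the resolvent identity. By construction, $d_{t,s}$ is bounded, has zero conditional mean given $X_0$, and satisfies $d_{t,s}(x,y)+d_{t,s}(y,x)=h_{t,s}(x)+h_{t,s}(y)+(\text{error})$.

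\emph{Step 2: the uniform $L^2(\Gamma)$ bound (main obstacle).} This is where tightness is used, and I expect it to be the hardest step. I would couple the scales by taking $s=1-1/n$ and $t=\theta/\sqrt n$. Then $\E_\Gamma[d_{t,s}^2]=\langle g,(I-P^2)g\rangle$ can be rewritten as a sum of terms of the form $t^{-2}\,\E_\pi\bigl|e^{itS_m}-1\bigr|^2$ and $t^{-2}\bigl(1-\operatorname{Re}\,\E_\pi e^{itS_m}\bigr)$, averaged geometrically over $m\approx n$. Tightness of $(S_n/\sqrt n)$ gives $\sup_n\bigl|1-\E_\pi e^{i\theta S_n/\sqrt n}\bigr|\to0$ as $\theta\to0$. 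What is needed, however, is the quadratic rate $O(\theta^2)$. I would try to get it from the spectral structure: reversibility makes $1-\operatorname{Re}\,\pi(P_t^n\one)$ behave like a positive quadratic form. A subadditivity or doubling argument comparing $S_{2m}$ with two stationary halves should then upgrade the $o(1)$ bound to an $O(\theta^2)$ bound uniformly in $n$. This is precisely where the ``correct scaling'' of $\theta$ has to be chosen, and I would first test the argument in the $L^2$ case, where the bound should reduce to $\langle g,(I-P^2)g\rangle\lesssim\sup_nA_n$.

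\emph{Step 3: passing to the limit.} With $\sup\E_\Gamma[d_{t,s}^2]<\infty$ in hand, I would extract a weakly convergent subsequence in $L^2(\Gamma)$ with limit $d$. The conditional mean-zero property passes to $d$, since it is tested against bounded functions of $x$ and so is preserved under weak limits. The antisymmetrized identity also passes to the limit, because $(x,y)\mapsto d(y,x)$ is a bounded operator on $L^2(\Gamma)$: it is an isometry by reversibility of $\Gamma$. It then remains to check that $h_{t,s}(x)+h_{t,s}(y)+(\text{error})$ converges to $h(x)+h(y)$ in a sense compatible with weak $L^2$ limits, for instance in $L^1(\Gamma)$ by dominated convergence using $h\in L^1(\pi)$, with the error terms vanishing in $L^1$. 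Together these yield~\eqref{eq:H_representation} and both stated properties of $d$.
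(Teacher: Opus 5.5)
\textbf{There is a genuine gap, in Step~2.} Your Step~3 is essentially the paper's limiting argument. Your resolvent is also the right object: the paper works with $g_p(x)=\E_x[e^{itS_{N_p}}]$, where $N_p$ is an independent geometric time with mean $1/p$. In your notation, with $p=1-s=1/n$ and $t=\theta/\sqrt n$, this is $\frac{p}{1-p}(G_{t,s}-\one)$.

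Step~2 is the heart of the proposition, and the target you set there is both unnecessary and, as stated, false. You ask for $1-\operatorname{Re}\,\E_\pi e^{i\theta S_n/\sqrt n}=O(\theta^2)$ uniformly in $n$.
\begin{itemize}
\item This fails already at $n=1$ whenever $h\notin L^2(\pi)$, a case the proposition allows; Example~14 of H\"aggstr\"om and Rosenthal has a CLT with $h\notin L^2(\pi)$. By Fatou, $\theta^{-2}\E_\pi[1-\cos(\theta h(X_0))]\to\infty$ as $\theta\to0$.
\item The same blow-up occurs, for each fixed $n$, in your geometric average through its $m=1$ term.
\item A version restricted to large $n$ amounts to a uniform truncated-second-moment bound on $S_n/\sqrt n$. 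Tightness does not provide that, and it is essentially what you are trying to prove.
\item No rate is needed. The macro-scale $\theta$ can stay fixed, because $t=\theta/\sqrt n\to0$ already gives $(e^{ith}-1)/(it)\to h$.
\end{itemize}

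\textbf{The missing idea is the normalization plus an exact computation.} Set $f=e^{-ith}g_p$, $c=\pi(f)$, and $D(x,y)=\bigl(g_p(y)-(Pg_p)(x)\bigr)/(itc)$. The resolvent identity $f=p+(1-p)Pg_p$, together with $|g_p|=|f|$ and stationarity (no reversibility needed), gives
\[
t^2|c|^2\norm{D}_{L^2(\Gamma)}^2=\norm{f}_{L^2(\pi)}^2-\norm{Pg_p}_{L^2(\pi)}^2=\frac{p\bigl[2\operatorname{Re}c-(2-p)\norm{f}_{L^2(\pi)}^2-p\bigr]}{(1-p)^2}.
\]
The factor $p$ cancels $t^2=\theta^2p$. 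Using $\norm{f}_{L^2(\pi)}^2\ge|c|^2$ then yields $\norm{D}_{L^2(\Gamma)}^2\le 2|1-c|/\bigl(\theta^2(1-p)^2|c|\bigr)$.

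Tightness enters only to fix one $\theta$ with $\sup_p|c-1|\le 1/2$, which also keeps $c$ away from $0$. This follows from equicontinuity at $0$ of the characteristic functions of $\sqrt p\,S_{N_p}$, since $c=p+(1-p)\E_\pi e^{itS_{N_p}}$. Your candidate $\operatorname{Im}(G_{t,s})/t$ lacks exactly this normalization. It is too large by a factor of order $n$, and taking imaginary parts before dividing by the complex constant $c$ gives only $\operatorname{Re}(c)\,(h(x)+h(y))$ in the limit.

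\textbf{A second ingredient is also missing.} The pointwise limit $D(x,y)+D(y,x)\to h(x)+h(y)$ requires $f(x)/c\to1$ for $\pi$-a.e.\ $x$. That is, the normalized resolvent must become asymptotically independent of the starting point. This is where ergodicity, in the form $\norm{P^m(x,\cdot)-\pi}_{\rm TV}\to0$, is used. Your ``error terms $e_{t,s}$ vanishing in $L^1$'' silently assumes it, because your proxy $h_{t,s}$ involves $G_{t,s}(x)$ itself.
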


\begin{proposition}\label{prop:key-representation-CLT}
If $S_n/\sqrt{n} \Rightarrow \Normal(0, \sigma^2)$ for $0 \leq \sigma^2 < \infty$, then the conclusions of Proposition~\ref{prop:key-representation} hold with the following strengthening: 
\begin{equation}\label{eq:finite-d2-CLT}
    \E_\pi[d (X_0, X_1)^2 ] \leq \sigma^2. 
\end{equation} 
\end{proposition}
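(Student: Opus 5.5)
We would construct the martingale difference $d$ as a limit of bounded functions built from characteristic functions, and then use the CLT to pin down the constant. For $t\in\R$ and $\lambda\in(0,1)$, consider the characteristic resolvent
\begin{equation}
u_{t,\lambda}(x)=\sum_{n\ge 0}\lambda^n\,\E_x\bigl[e^{itS_n}\bigr],
\end{equation}
which is bounded by $(1-\lambda)^{-1}$. Expanding one step shows that $u$ solves a twisted resolvent equation of the form $u(x)=1+\lambda e^{ith(x)}(Pu)(x)$. Take the forward martingale difference $m(x,y)=u(y)-(Pu)(x)$ and normalize it by a scaling $c=c(t,\lambda)$. The plan is to choose $t=s/\sqrt{N}$ and $\lambda=1-1/N$ and let $N\to\infty$, so that the dynamics is probed exactly at the diffusive scale on which the tightness/CLT information lives.

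The candidate approximant is then
\begin{equation}
d_N(x,y)=c_N\,\mathrm{Im}\bigl[\overline{\,\cdot\,}\,m_N(x,y)\bigr],
\end{equation}
with the precise complex normalization fixed by linearizing $e^{ith}\approx 1+ith$. Reversibility enters at this step: for $(X_0,X_1)\sim\Gamma$, the pair $(X_1,X_0)$ has the same law. Adding the forward and the time-reversed decomposition of the resolvent identity, the term $(Pu)(x)$ cancels against $(Pu)(y)$ up to an error. This produces $h(x)+h(y)\approx d_N(x,y)+d_N(y,x)$, with an error controlled in $L^1(\Gamma)$ or in probability. Two features of this identity matter. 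It only ever involves the bounded quantity $e^{ith}$, never $h^2$, so $h\in L^1$ suffices. And $\E[d_N(X_0,X_1)\mid X_0]=0$ holds exactly by construction.

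Next we need a uniform $L^2$ bound. Since $m_N$ is a martingale difference,
\begin{equation}
\E_\Gamma|m_N|^2=\pi(|u|^2)-\pi(|Pu|^2).
\end{equation}
Using the resolvent equation and stationarity, this equals a quantity expressible through $\E_\pi[e^{itS_n}]$, weighted geometrically. Under tightness, $|\E_\pi e^{isS_n/\sqrt N}|$ stays bounded away from $0$ for $n\lesssim N$ and small $s$. This gives $\sup_N\E|d_N|^2<\infty$ after normalization. Under the CLT, the same expression converges to an explicit Gaussian integral, whose limit yields the sharp bound $\E d^2\le\sigma^2$; that is the Proposition~\ref{prop:key-representation-CLT} refinement.

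To pass to the limit, we would take a weak $L^2(\Gamma)$ limit point $d$ of $(d_N)$. Weak limits preserve the conditional-mean-zero property and the linear identity \eqref{eq:H_representation}, once the error terms vanish in $L^1$. By weak lower semicontinuity of the norm, $\E d^2\le\liminf_N\E d_N^2$.

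The main obstacle is step three: showing that the error in the forward--backward identity vanishes while the normalization $c_N$ keeps $\E d_N^2$ bounded. In other words, we must relate $\pi(|u|^2)-\pi(|Pu|^2)$ to the characteristic function of $S_n$ precisely enough that tightness alone, rather than any second moment of $h$, controls it. We would first try to write everything through the spectral measure of the unitary-type twisted operator $e^{ith}P$, and exploit reversibility to make the relevant sums telescope.
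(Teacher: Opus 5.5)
Your construction is the paper's own. With $p=1/N$, $\lambda=1-p$ and $t=s\sqrt p$ (your $s$ plays the role of the paper's $t$), one has $Pu=f_p/p$, $u=1+\frac{1-p}{p}g_p$ and $p\,\pi(u)=c_p$. Your intended $d_N=\operatorname{Im}[\overline{\pi(u)}\,m_N]/(t|\pi(u)|^2)=\operatorname{Re}[m_N/(it\,\pi(u))]$ is the real part of a multiple of $D_p$. However, the two steps that carry the proof are misdiagnosed or left open.

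\textbf{The symmetric identity.} It does not come from $(Pu)(x)$ cancelling $(Pu)(y)$, and reversibility plays no role in it. The sum $m(x,y)+m(y,x)=[u(x)-Pu(x)]+[u(y)-Pu(y)]$ splits into one-point terms, and the resolvent equation gives
\[
\frac{u-Pu}{it\,\pi(u)}=\frac{1}{it\,\pi(u)}+\frac{\lambda e^{ith}-1}{it}\cdot\frac{Pu}{\pi(u)}.
\]
The first term is $O(\sqrt p)$ and $(\lambda e^{ith}-1)/(it)\to h$. So it is the $Pu$ terms that produce $h$. The identity survives in the limit only if $(Pu)(x)/\pi(u)=f_p(x)/c_p\to1$ for $\pi$-a.e.\ $x$.

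That flattening is exactly where ergodicity is used. Memorylessness of the geometric clock gives $f_p-P^kf_p\to0$ for fixed $k$, and $|P^kf_p(x)-c_p|\le2\|P^k(x,\cdot)-\pi\|_{\mathrm{TV}}$. Your proposal never invokes ergodicity. For a reducible chain whose components have different asymptotic variances, the construction would produce $h$ times a component-dependent constant rather than $h$. Reversibility is needed only so that $(x,y)\mapsto(y,x)$ is an isometry of $L^2(\Gamma)$; this bounds the reversed term and lets weak limits pass through the reversal.

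\textbf{The $L^2$ bound.} This is the whole content of the $\sigma^2$ strengthening. The quantity $\pi(|u|^2)-\pi(|Pu|^2)$ is \emph{not} equal to a functional of $\E_\pi e^{itS_n}$: it contains $\pi(|u|^2)$, which is not determined by the laws of $S_n$ under $\pi$. The spectral route you suggest is also unavailable, because $e^{ith}P$ is in general not normal on $L^2(\pi)$, even for reversible $P$.

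The missing step is elementary. The resolvent equation gives $|Pu|=|u-1|/\lambda$ pointwise, hence
\[
\E_\Gamma|m_N|^2=-(\lambda^{-2}-1)\,\pi(|u|^2)+\lambda^{-2}\bigl(2\operatorname{Re}\pi(u)-1\bigr).
\]
The coefficient of $\pi(|u|^2)$ is negative, so Jensen's inequality $\pi(|u|^2)\ge|\pi(u)|^2$ gives a bound depending only on $\kappa_N:=p\,\pi(u)$:
\[
\E_\Gamma|d_N|^2\le\frac{\E_\Gamma|m_N|^2}{t^2|\pi(u)|^2}\le\frac{2\operatorname{Re}\kappa_N-(2-p)|\kappa_N|^2-p}{(1-p)^2s^2|\kappa_N|^2}.
\]
Under the CLT, the random-index argument gives $\kappa_N\to\kappa=(1+s^2\sigma^2/2)^{-1}$. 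The right side then tends to $2(1-\kappa)/(s^2\kappa)=\sigma^2$, and weak lower semicontinuity yields $\E_\pi[d(X_0,X_1)^2]\le\sigma^2$. This is the paper's Lemma~\ref{lem:Dp-norm} combined with Lemma~\ref{lem:fp-bound}. Your proposal names this as the main obstacle but does not supply it.
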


\begin{proof}[Proof of Theorem~\ref{thm:P2}]
Let $d$ be given by Proposition~\ref{prop:key-representation}. 
Define  
\begin{align*}
M_k = d(X_{k-1}, X_k), \qquad 
{M}^*_k = d(X_k, X_{k-1}).
\end{align*} 
Then $(M_k)_{k \ge 1}$ is a forward martingale-difference sequence with respect to the filtration $\F_k = \sigma(X_0, \ldots, X_{k})$, and $(M^*_k)_{k \ge 1}$ is a reverse martingale-difference sequence with respect to the reverse filtration  ${\F}^*_{k} = \sigma(X_{k-1}, X_{k}, \ldots)$. In particular, since $P$ is reversible, 
\begin{equation}
    \E[ M^*_k \mid  {\F}^*_{k + 1}  ] = \E[ d(X_k, X_{k-1}) \mid X_k ] = \E[ d(X_k, X_{k+1}) \mid X_k ]  = 0.
\end{equation}
See, e.g.,~\citet[Theorem 2.5]{Wu1999} for a complete proof. 
Write $\norm{d}_{L^2(\Gamma)}^2 = \E_\pi [ d (X_0, X_1)^2 ]$. Then, 
$\E_\pi[ M_k^2 ] = \E_\pi\left[ (M^*_k)^2 \right]  = \norm{d}_{L^2(\Gamma)}^2,$  
and by the pairwise $L^2$-orthogonality  within each sequence, 
\begin{equation} \label{eq:mart-ortho}
\E_\pi \left[ \left( \sum_{k = 0}^{n-1} M_{2k + 1}\right)^2\right] = \E_\pi \left[\left(  \sum_{k = 0}^{n - 1} M^*_{2k + 1}\right)^2\right]  
= n \norm{d}_{L^2(\Gamma)}^2. 
\end{equation} 
 
By~\eqref{eq:H_representation}, the even sum subsequence yields the decomposition
\begin{equation}\label{eq:decomp-S-2n}
S_{2n} = \sum_{k = 0}^{n-1} \left[ h(X_{2k}) + h(X_{2k + 1}) \right]
= \sum_{k = 0}^{n-1} M_{2k + 1} +\sum_{k = 0}^{n-1} M^*_{2k + 1}.
\end{equation}
Using $\pi(h) = 0$, $(a+b)^2 \leq2 a^2 + 2 b^2$ for $a, b \in \R$ and~\eqref{eq:mart-ortho}, we have  
\begin{align*}
 \Var_\pi(S_{2n}) 
  =\E_\pi[S_{2n}^2] 
  \leq 2\E_\pi\left[ \left( \sum_{k = 0}^{n-1} M_{2k + 1}\right)^2 \right]  +2\E_\pi\left[ \left( \sum_{k = 0}^{n - 1} M^*_{2k + 1}\right)^2 \right] 
  =4n \norm{d}_{L^2(\Gamma)}^2.
\end{align*}
Since $\norm{d}_{L^2(\Gamma)}^2 < \infty$, this shows that 
\begin{equation}\label{eq:var-even}
    \limsup_n \frac{1}{2n}\Var_\pi(S_{2n}) \leq 
    2 \norm{d}_{L^2(\Gamma)}^2 < \infty. 
\end{equation}

Let $V_k =(X_{2k}, X_{2k+1})$ for $k \geq 0$. Then $(V_k)_{k \geq 0}$ is a stationary and ergodic Markov chain, though it is not necessarily reversible. 
Let $H(x, y) = h(x) + h(y)$, which is in $L^2(\Gamma)$. 
By part (iii) of Theorem~\ref{thm:prelim}, condition~\eqref{eq:var-even} implies that a $\sqrt{n}$-CLT holds for the partial sums of $(H(V_k))_{k \geq 0}$, i.e., $S_{2n} / \sqrt{2n} \Rightarrow \Normal(0, \sigma^2)$ for some $0 \leq \sigma^2 < \infty$. By Slutsky's theorem, the odd sum subsequence converges weakly to the same limit, and thus $S_n / \sqrt{n} \Rightarrow \Normal(0, \sigma^2)$. 
So by Proposition~\ref{prop:key-representation-CLT}, we can further choose $d$ such that $\norm{d}_{L^2(\Gamma)}^2  \leq \sigma^2$, proving the bound~\eqref{eq:even-bound-main}. 

To prove $A_{2n} \rightarrow \sigma^2$, we apply the CLT for martingale differences (see Lemma~\ref{lem:mart-CLT}) to sequences $(M_{2k+1})_{k\geq0}$ and $(M^*_{2k+1})_{k\geq0}$, both of which are stationary and ergodic. This yields
\begin{equation}\label{eq:CLT-mart-diff-Mk}
 J_n \coloneqq \frac{1}{\sqrt{n}} \sum_{k=0}^{n-1} M_{2k+1} \Longrightarrow \Normal\left(0, \,  \norm{d}_{L^2(\Gamma)}^2\right), \qquad  
 J_n^* \coloneqq \frac{1}{\sqrt{n}} \sum_{k=0}^{n-1} M^*_{2k+1} \Longrightarrow \Normal\left(0, \,  \norm{d}_{L^2(\Gamma)}^2\right). 
\end{equation}
By~\eqref{eq:mart-ortho}, this implies that both $(J_n^2)_{n\geq 1}$ and $( (J^*_n)^2 )_{n \geq 1}$ are  uniformly integrable. 
Consequently, by~\eqref{eq:decomp-S-2n}, $(S_{2n}^2/n)_{n\geq 1}$ is also uniformly integrable, and thus $A_{2n}$ converges to $\sigma^2$.

Finally, letting $n = 1$, we get 
\begin{equation}\label{eq:bound-first-pair}
  \E_\pi[S_{2}^2] =  \E_\pi \left[ (h(X_0) + h(X_1))^2 \right]  \leq 4 \sigma^2.   
\end{equation}
Hence, if $\sigma^2 = 0$,   $h(X_0) + h(X_1) = 0$ almost surely, which completes the proof of Theorem~\ref{thm:P2}.
\end{proof}

\begin{remark}\label{rmk:P6-general-space}
Theorem~\ref{thm:P2} actually holds for a general state space $(\X, \F)$. The current proof only requires that $\F$ be countably generated so that ergodicity can be used to obtain convergence in total variation distance; see Lemma~\ref{lem:fp-bound} in Section~\ref{sec:prop-key-representation}.  
However, in Lemma~\ref{lemma:countable_extension}, we show that   one can always restrict $P$ on a general space to a countably generated sigma-algebra for the stationary process $(h(X_n))_{n \ge 0}$. 
In particular, Lemma~\ref{lemma:countable_extension} implies that invariance, reversibility, $\phi$-irreducibility, and aperiodicity can be preserved under this restriction. 
\end{remark}

\subsection{Sharpness and Consequences of Theorem~\ref{thm:P2}}

By stationarity and~\eqref{eq:bound-first-pair}, Theorem~\ref{thm:P2} implies that, for every $k \geq 1$, $h(X_k) + h(X_{k-1})$ is square-integrable with variance bounded by $4 \sigma^2$.  
We  give a simple example showing that the factor $4$ in general cannot be improved. 

\begin{proposition}
Let $\X = \{-1, 1\}$ and $P$ be the transition matrix with $P(x, x) = \delta$, $P(x, -x) = 1 - \delta$ for some $0<\delta <1$. Let $h(x) = x$. Then, $\Var_\pi(S_2) = 4 \delta$, while a $\sqrt{n}$-CLT holds with
\begin{equation} 
    \sigma^2 = \frac{\delta}{1 - \delta}. 
\end{equation} 
Hence, $\sigma^{-2} \Var_\pi(S_{2}) = 4 (1 - \delta) \rightarrow 4$ as $\delta \downarrow 0$. 
\end{proposition}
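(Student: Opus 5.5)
The plan is to compute everything explicitly for this two-state chain, using its spectral structure. The matrix $P$ is symmetric and doubly stochastic, so $\pi$ is uniform on $\{-1,1\}$ and $P$ is reversible with respect to $\pi$. Since $0<\delta<1$, the chain is irreducible and aperiodic, hence ergodic. The function $h(x)=x$ satisfies $\pi(h)=0$ and is bounded, so $h\in L^2(\pi)$. Moreover, $h$ is an eigenfunction of $P$:
\begin{equation*}
(Ph)(x)=\delta x-(1-\delta)x=(2\delta-1)x .
\end{equation*}
The eigenvalue is therefore $\lambda=2\delta-1\in(-1,1)$.

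\textbf{Step 1: autocovariances.} Since $P^k h=\lambda^k h$, we get $\gamma_k=\E_\pi[h(X_0)(P^kh)(X_0)]=\lambda^k\pi(h^2)=\lambda^k$. In particular, $\gamma_0=1$ and $\gamma_1=2\delta-1$. This gives
\begin{equation*}
\Var_\pi(S_2)=\gamma_0+2\gamma_0+\dots
\end{equation*}
more precisely $\Var_\pi(S_2)=2\gamma_0+2\gamma_1=2+2(2\delta-1)=4\delta$. As a direct check, $S_2=\pm 2$ exactly when the chain holds, which has probability $\delta$, and $S_2=0$ otherwise. This also gives $4\delta$.

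\textbf{Step 2: the CLT and its variance.} The spectral measure $\mathcal{E}_h$ is the point mass at $\lambda$ with weight $\pi(h^2)=1$. Hence
\begin{equation*}
C=\frac{1+\lambda}{1-\lambda}=\frac{2\delta}{2-2\delta}=\frac{\delta}{1-\delta}<\infty .
\end{equation*}
By part (i) of Theorem~\ref{thm:prelim} (Kipnis--Varadhan), a $\sqrt{n}$-CLT holds with $\sigma^2=C$. Alternatively, the state space is finite, so uniform ergodicity also yields the CLT. As a cross-check, $B=1+2\sum_{k\ge1}\lambda^k=1+2\lambda/(1-\lambda)=(1+\lambda)/(1-\lambda)$, which agrees with $A=B=C$ from part (ii).

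\textbf{Step 3: the ratio.} We have $\sigma^{-2}\Var_\pi(S_2)=4\delta(1-\delta)/\delta=4(1-\delta)$, which tends to $4$ as $\delta\downarrow0$. There is no real obstacle in this argument. The only point needing care is to confirm that the hypotheses of the cited results hold, namely ergodicity ($0<\delta<1$) and reversibility, so that $\sigma^2$ is genuinely the CLT variance rather than merely a formal value of $C$.
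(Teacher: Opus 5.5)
Your proof is correct and follows the same route as the paper: you identify $h$ as the eigenfunction with eigenvalue $\lambda = 2\delta-1$, read off $\sigma^2 = (1+\lambda)/(1-\lambda) = \delta/(1-\delta)$ from the spectral formula (justified by Kipnis--Varadhan), and compute $\Var_\pi(S_2) = 4\delta$ directly. The only blemish is the abandoned display $\Var_\pi(S_2)=\gamma_0+2\gamma_0+\dots$ in Step 1, which should be deleted in favor of $2\gamma_0+2\gamma_1$.
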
 
\begin{proof}
The second largest eigenvalue of $P$ is $\lambda = 2\delta - 1$. Hence, the spectral formula yields
$\sigma^2 = (1 + \lambda)/(1 - \lambda) = \delta/(1 - \delta)$.   
And a direct calculation gives $\Var_\pi(S_{2}) = \E_\pi[S_{2}^2 ] = 4 \delta$.  
\end{proof} 

When $h \notin L^2(\pi)$ but a $\sqrt{n}$-CLT holds, the fact that $h(X_k) + h(X_{k-1})$ is square-integrable can be used to precisely characterize the behavior of $A, B$. 

\begin{corollary}\label{coro:P2}
Let $P$ be reversible and ergodic and $(X_n)_{n \ge 0}$ be stationary. 
Let $h \in L^1_0(\pi) \setminus L^2(\pi)$, and assume a $\sqrt{n}$-CLT holds for $h$ with $0 \leq \sigma^2 < \infty$. Then,
\begin{equation}
    \liminf_{n \rightarrow \infty} A_n = \sigma^2, \quad \limsup_{n \rightarrow \infty} A_n = \infty, \quad 
    \gamma_{2k} = \infty \text{ and }\gamma_{2k+1} = -\infty \text{ for } k \geq 0.  
\end{equation}
\end{corollary}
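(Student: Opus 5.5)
The plan is to derive everything from Theorem~\ref{thm:P2}. Since a $\sqrt{n}$-CLT holds, $(S_n/\sqrt{n})_{n\ge1}$ is bounded in probability. Theorem~\ref{thm:P2} then gives three facts:
\[
\sup_{n\ge1} A_{2n}\le 2\sigma^2,\qquad \lim_{n\to\infty}A_{2n}=\sigma^2,\qquad \E_\pi\bigl[(h(X_0)+h(X_1))^2\bigr]\le 4\sigma^2 .
\]
In particular $S_{2n}\in L^2$ for every $n$. Let $g_k=h(X_{k-1})+h(X_k)$. By stationarity, each $g_k$ is in $L^2$ with $\E_\pi[g_k^2]\le 4\sigma^2$.

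\emph{Behavior of $A_n$.} For odd indices, write $S_{2n+1}=S_{2n}+h(X_{2n})$. The term $S_{2n}$ is in $L^2$, while $h(X_{2n})\sim h(X_0)$ is not, since $h\notin L^2(\pi)$. Hence $S_{2n+1}\notin L^2$, and $A_{2n+1}=\infty$ for every $n\ge0$. (This refines Lemma~\ref{lem:inf-A-h2} by identifying which index carries the infinite variance.) Combined with $A_{2n}\to\sigma^2<\infty$, we get $\liminf_n A_n=\sigma^2$ and $\limsup_n A_n=\infty$.

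\emph{Behavior of $\gamma_k$.} The key observation is a telescoping identity built from the $L^2$ pair sums $g_k$:
\[
h(X_k)-(-1)^k h(X_0)=\sum_{j=1}^{k}(-1)^{k-j} g_j =: R_k ,
\]
which follows by induction from $h(X_j)=g_j-h(X_{j-1})$. So $R_k\in L^2$ for each fixed $k$, and $h(X_k)=(-1)^k h(X_0)+R_k$.

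For even $k$, the elementary bound $|ab|\le a^2/2+b^2/2$ gives
\[
h(X_0)h(X_k)=h(X_0)^2+h(X_0)R_k\ \ge\ \tfrac12 h(X_0)^2-\tfrac12 R_k^2 .
\]
The negative part of the product is therefore dominated by $R_k^2/2$, which is integrable, so $\gamma_k$ is well defined in $(-\infty,\infty]$. Its value is at least $\tfrac12\pi(h^2)-\tfrac12\E[R_k^2]=\infty$. This includes $k=0$, where $R_0=0$.

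For odd $k$, the same argument gives
\[
h(X_0)h(X_k)\le -\tfrac12 h(X_0)^2+\tfrac12 R_k^2 .
\]
Now the positive part is integrable, so $\gamma_k$ is well defined and equals $-\infty$.

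The substantive input is entirely Theorem~\ref{thm:P2}, specifically the square integrability of adjacent pair sums. The only step needing care is the well-definedness of $\gamma_k$ as an extended real number. I expect this to be the main (if modest) obstacle, because $h\in L^1$ and $R_k\in L^2$ alone do not make $h(X_0)R_k$ integrable. The one-sided domination by $\pm\tfrac12 h^2+\tfrac12 R_k^2$ handles it, since it ensures exactly one of the positive and negative parts is integrable.
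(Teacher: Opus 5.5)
Your proposal is correct and follows essentially the same route as the paper: Theorem~\ref{thm:P2} gives square-integrability of adjacent pair sums, and the alternating telescoping identity combined with a one-sided elementary inequality fixes the sign of each $\gamma_k$. The one small difference is in the $A_n$ part. You note directly that $A_{2n+1}=\infty$ for all $n$ because $S_{2n}\in L^2$, which makes $\liminf_n A_n=\sigma^2$ immediate, whereas the paper uses Lemma~\ref{lem:inf-A-h2} for the $\limsup$ and a Fatou argument for $\liminf_n A_n\ge\sigma^2$.
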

\begin{proof}
By Lemma~\ref{lem:inf-A-h2}, $h \notin L^2(\pi)$ yields $\limsup_{n \rightarrow \infty} A_n = \infty$.
By Theorem~\ref{thm:P2}, $A_{2n} \rightarrow \sigma^2$, which implies $\liminf_{n \rightarrow \infty} A_n \leq \sigma^2$. Moreover, since $u \mapsto u^2$ is lower semicontinuous and $S_n / \sqrt{n} \Rightarrow \Normal (0, \sigma^2)$, Fatou's lemma yields $\liminf_{n \rightarrow \infty} A_n \geq \sigma^2$, and thus $\liminf_{n \rightarrow \infty} A_n = \sigma^2$. 

For the lag-$k$ autocovariance, note that 
\begin{equation}
    h(X_0) - h(X_{2k}) = \sum_{j=0}^{2k-1} (-1)^j [h(X_{j}) + h(X_{j+1})], 
\end{equation}
where the sum of each adjacent pair, $h(X_{j}) + h(X_{j+1})$, is in $L^2$. Hence, $ h(X_0) - h(X_{2k}) \in L^2$. Since for any $a, b \in \R$, the negative part of  $ab$, denoted by $(ab)^{-}$, satisfies $(ab)^{-} \leq (a-b)^2/4$, 
\begin{equation}
    \E_\pi \left[ ( h(X_0) h(X_{2k}) )^- \right] \leq \frac{1}{4} \E_\pi \left[ ( h(X_0) - h(X_{2k}) )^2 \right] 
    < \infty. 
\end{equation}
Meanwhile, because
\begin{equation}
    h(X_0)^2 + h(X_{2k})^2 = ( h(X_0) - h(X_{2k}) )^2  + 2 h(X_0) h(X_{2k}) \text{ is not integrable,} 
\end{equation}
the positive part of $h(X_0) h(X_{2k})$ cannot be integrable, which proves $\gamma_{2k} = \infty$. 
An analogous argument shows that $ h(X_0) + h(X_{2k + 1}) \in L^2$ and consequently $\gamma_{2k + 1} = -\infty$. 
\end{proof}

Combining Theorem~\ref{thm:P2} with the results of~\citet{HaggstromRosenthal2007}, we also obtain solutions to HR Problems 4 and 6. 

\begin{corollary}[Solution to HR Problem 6]\label{thm:HR6}
    Let $P$ be reversible and ergodic and $(X_n)_{n \ge 0}$ be stationary. Let $h \in L^2(\pi)$ with $\pi(h) = 0$ satisfy a $\sqrt{n}$-CLT. 
    Then, $A < \infty$. 
\end{corollary}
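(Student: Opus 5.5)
The plan is to combine Theorem~\ref{thm:P2} with Theorem~\ref{thm:prelim}(ii).

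First, the CLT assumption gives tightness. Since $S_n/\sqrt{n} \Rightarrow \Normal(0,\sigma^2)$, the sequence $(S_n/\sqrt{n})_{n\geq 1}$ is bounded in probability. Theorem~\ref{thm:P2} therefore applies, because $h \in L^2(\pi) \subset L^1(\pi)$ and $\pi(h)=0$. It yields $\sup_{n\ge1} A_{2n} \le 2\sigma^2 < \infty$ and $A_{2n}\to\sigma^2$.

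Second, I upgrade the even-indexed control to the full sequence. Because $h\in L^2(\pi)$, each $\gamma_k$ is finite and every $A_n$ is finite. By Theorem~\ref{thm:prelim}(ii), which is Theorem~4 of H\"aggstr\"om--Rosenthal and applies since $P$ is reversible and ergodic with $h\in L^2(\pi)$, the limit $A=\lim_n A_n$ exists in $[0,\infty]$ and equals $B=C$. A limit that exists must agree with its limit along the subsequence $n=2m$. Hence $A=\lim_m A_{2m}=\sigma^2<\infty$.

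If one prefers to avoid relying on existence of the limit, there is a direct elementary route. Write $S_{2n+1}=S_{2n}+h(X_{2n})$ and apply Minkowski's inequality:
\[
\sqrt{\Var_\pi(S_{2n+1})}\le \sqrt{\Var_\pi(S_{2n})}+\norm{h}_{L^2(\pi)}.
\]
Dividing by $\sqrt{2n+1}$ shows $\limsup A_{2n+1}\le \limsup A_{2n}=\sigma^2$. The same inequality with the roles reversed gives the matching lower bound, so $A_n\to\sigma^2$ along all $n$.

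No step is a real obstacle, since all the analytic work sits inside Theorem~\ref{thm:P2}. The only point needing care is that $\sqrt{n}$-CLT implies tightness, which holds because weakly convergent sequences of real random variables are tight.
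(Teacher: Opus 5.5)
Your proposal is correct, and its main route is the paper's proof: Theorem~\ref{thm:P2} controls the even-indexed $A_{2n}$, and Theorem~4 of H\"aggstr\"om--Rosenthal guarantees that $\lim_n A_n$ exists, so that limit is finite. (You identify it as $\sigma^2$ via $A_{2n}\to\sigma^2$, whereas the paper only uses $A_{2n}\le 2\sigma^2$.) Your Minkowski fallback, $\norm{S_{2n+1}}_{L^2}\le\norm{S_{2n}}_{L^2}+\norm{h}_{L^2(\pi)}$ together with its reverse, is also valid; it has the small advantage of removing the dependence on the H\"aggstr\"om--Rosenthal existence result, giving $A=\sigma^2$ directly from Theorem~\ref{thm:P2} and $h\in L^2(\pi)$.
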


\begin{proof}
By Theorem 4 of~\citet{HaggstromRosenthal2007}, if $h \in L^2(\pi)$, then $A$ exists as an extended-real limit of the sequence $A_n$. 
By Theorem~\ref{thm:P2}, the existence of a $\sqrt{n}$-CLT implies that $A_{2n} \leq 2 \sigma^2$ for each $n$, and thus $A < \infty$. 
\end{proof}

\begin{corollary}[Solution to HR Problem 4]\label{thm:HR4}
    Let $P$ be reversible and ergodic and $(X_n)_{n \ge 0}$ be stationary. Let $h \in L^1(\pi)$ with $\pi(h) = 0$ satisfy a $\sqrt{n}$-CLT. 
    Then, $A < \infty$ if and only if $\pi(h^2) < \infty$. 
\end{corollary}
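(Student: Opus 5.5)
The statement is an equivalence, so the plan is to prove each direction from results already established.

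For the direction $A<\infty \Rightarrow \pi(h^2)<\infty$, we simply invoke the first assertion of Lemma~\ref{lem:inf-A-h2}. If $A$ exists and is finite, then $A_n<\infty$ for all large $n$, so two consecutive partial sums $S_n,S_{n+1}$ have finite variance. Their difference $h(X_n)$ is therefore square-integrable, and by stationarity $h\in L^2(\pi)$. This direction uses neither reversibility nor the CLT.

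For the converse, assume $\pi(h^2)<\infty$ together with the $\sqrt{n}$-CLT. We would argue exactly as in Corollary~\ref{thm:HR6}, in two steps.

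First, existence of $A$. Since $h\in L^2(\pi)$ and $P$ is reversible and ergodic, Theorem~\ref{thm:prelim}(ii) (Theorem~4 of \citet{HaggstromRosenthal2007}) gives that $A=\lim_n A_n$ exists in $[0,\infty]$ and equals $B=C$. This matters because for $h\notin L^2(\pi)$ the limit need not exist at all; Corollary~\ref{coro:P2} shows it is then genuinely undefined.

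Second, finiteness. The CLT makes $(S_n/\sqrt n)$ tight, so Theorem~\ref{thm:P2} applies and yields $\sup_n A_{2n}\le 2\sigma^2<\infty$. Because the full limit exists, it coincides with the limit along the even subsequence, so $A=\lim_n A_{2n}\le 2\sigma^2$. In fact Theorem~\ref{thm:P2} gives $\lim_n A_{2n}=\sigma^2$, hence $A=\sigma^2$.

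There is no real obstacle here: all the difficulty sits inside Theorem~\ref{thm:P2}. The only points needing care are, first, noting that a CLT implies tightness, so Theorem~\ref{thm:P2} is applicable. Second, making sure existence of the limit $A$ comes from the $L^2$ result of \citet{HaggstromRosenthal2007}, so that the bound along even indices transfers to $A$ itself. If one also wants the remark that, outside $L^2$, $A$ is undefined rather than infinite, Corollary~\ref{coro:P2} supplies it.
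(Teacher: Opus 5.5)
Your proposal is correct and matches the paper's proof. You use Lemma~\ref{lem:inf-A-h2} for one direction; for the other you inline the proof of Corollary~\ref{thm:HR6}: existence of $A$ from Theorem~\ref{thm:prelim}(ii), then the even-index bound $\sup_n A_{2n}\le 2\sigma^2$ from Theorem~\ref{thm:P2}.
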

\begin{proof}
By Lemma~\ref{lem:inf-A-h2},  $A < \infty$ implies $h \in L^2(\pi)$. 
By Corollary~\ref{thm:HR6},  $h \in L^2(\pi)$ implies $A < \infty$. 
\end{proof}

\subsection{Proofs of Propositions~\ref{prop:key-representation} and~\ref{prop:key-representation-CLT}}
\label{sec:prop-key-representation}

For $0 < p < 1$, let $N_p$ have a geometric distribution and be independent of $(X_n)_{n \ge 0}$. Thus, 
\begin{equation}\label{eq:density-Np}
\Prob( N_p = n ) = p(1-p)^{n- 1}, \quad n = 1, 2, \dots. 
\end{equation} 
All expectations below include the randomness of $N_p$.
Define $\tilde{N}_p = N_p - 1$. 
Let $\tilde{S}_{n} = \sum_{k = 1}^{n} h(X_k)$ denote the sum starting at index $1$, and define $\tilde{S}_{0} = 0$. 
Pick $t \neq 0$ and define $\theta = t \sqrt{p}$. Define the ``characteristic resolvent'' functions by 
\begin{equation}
f_p(x; t) = \E_x\left[ e^{i \theta \tilde{S}_{\tilde{N}_p}} \right], \quad g_p(x; t) = \E_x\left[ e^{i \theta S_{N_p}} \right] = e^{i \theta h(x)} f_p(x; t). 
\end{equation}
We will omit the dependence on $t$ and simply write $f_p(x), g_p(x)$. Define  
\begin{equation}\label{eq:def-cp}
    c_p \coloneqq \pi(f_p) = \E_\pi\left[ \exp\left(i \theta \tilde{S}_{\tilde{N}_p} \right) \right]. 
\end{equation}
 
The proof of the propositions relies on two properties of $f_p$ and $g_p$. The first shows that $f_p$ becomes asymptotically independent of $x$ as $p \downarrow 0$, and if the $\sqrt{n}$-CLT holds, we can also find the limit of $c_p$. This is the only point in the proof that requires the use of the ergodicity assumption. 

\begin{lemma}\label{lem:fp-bound}
For $\pi$-almost every $x$, $ | f_p(x) - c_p |  \rightarrow 0$ as $p \downarrow 0$. 
If $S_n / \sqrt{n} \Rightarrow \Normal (0, \sigma^2)$, then 
\begin{equation}
    \lim_{p \downarrow 0} c_p = \frac{1}{1 + t^2 \sigma^2 / 2}. 
\end{equation} 
\end{lemma}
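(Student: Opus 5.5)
Both claims come from conditioning on $\tilde N_p$ and using ergodicity (total variation convergence) for the first, and the CLT along a geometric time scale for the second.

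\emph{First claim.} Write
\begin{equation}
f_p(x) = \sum_{n\ge 0} p(1-p)^n\, \E_x\bigl[e^{i\theta \tilde S_n}\bigr].
\end{equation}
Fix an integer $m\ge 1$. Split $\tilde S_n$ into the first $m$ steps and the remainder, i.e.\ $\tilde S_n = \tilde S_m + (\tilde S_n - \tilde S_m)$ for $n\ge m$. The remainder is a functional of the chain started at $X_m$. By the Markov property,
\begin{equation}
\E_x\bigl[e^{i\theta\tilde S_n}\bigr] = \E_x\bigl[e^{i\theta \tilde S_m}\,\phi_{n-m}(X_m)\bigr], \qquad \phi_k(y)=\E_y\bigl[e^{i\theta \tilde S_k}\bigr].
\end{equation}
Summing over $n$ gives $\E_x[e^{i\theta \tilde S_m} f_p(X_m)]$ up to an error of at most $1-(1-p)^m\le mp$, coming from the terms $n<m$. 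Since $\theta = t\sqrt p\to 0$ and $\tilde S_m$ is a.s.\ finite, $|e^{i\theta\tilde S_m}-1|\to 0$ in $L^1(\P_x)$ by dominated convergence. Hence
\begin{equation}
f_p(x) = \E_x[f_p(X_m)] + o(1) = (P^m f_p)(x) + o(1) \quad \text{as } p\downarrow0, \ m \text{ fixed}.
\end{equation}
The same computation under $\pi$ gives $c_p = \pi(f_p)= \pi(P^m f_p)$ exactly, by stationarity. Since $|f_p|\le 1$,
\begin{equation}
|(P^mf_p)(x) - \pi(f_p)|\le 2\,\|P^m(x,\cdot)-\pi\|_{TV}.
\end{equation}
Therefore $\limsup_{p\downarrow0}|f_p(x)-c_p|\le 2\|P^m(x,\cdot)-\pi\|_{TV}$, which tends to $0$ as $m\to\infty$ for $\pi$-a.e.\ $x$ by ergodicity. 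The one delicate point is the $o(1)$ term, which must be controlled for fixed $x$ and fixed $m$. It is controlled because $\E_x|e^{i\theta\tilde S_m}-1|\to0$ by bounded convergence, and the bound is uniform over the tail index $n$.

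\emph{Second claim.} Since the chain is stationary, $\tilde S_n$ has the same law as $S_n$ under $\pi$. So
\begin{equation}
c_p = \sum_{n\ge0}p(1-p)^n\,\psi_n\bigl(t\sqrt{pn}\bigr), \qquad \psi_n(u)=\E_\pi\bigl[e^{iuS_n/\sqrt n}\bigr].
\end{equation}
Substitute $n = s/p$, so that $pn\approx s$ with $s$ approximately $\Exp(1)$-distributed as $p\downarrow0$. Equivalently, $p\tilde N_p\Rightarrow \Exp(1)$, and $\tilde N_p$ is independent of the chain.

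By the CLT, $\psi_n(u)\to e^{-\sigma^2u^2/2}$. Lévy's continuity theorem upgrades this to uniform convergence on compact sets of $u$. Split $c_p$ according to whether $\tilde N_p<K$ or $\tilde N_p\ge K$ for a fixed $K$. The first part has probability at most $Kp\to0$. On $\{\tilde N_p\ge K\}$, replace $\psi_n(t\sqrt{pn})$ by $e^{-\sigma^2t^2pn/2}$ with an error at most $\sup_{n\ge K}\sup_{|u|\le U}|\psi_n(u)-e^{-\sigma^2u^2/2}|$, plus the probability that $pn>U^2/t^2$. The latter is small uniformly in $p$ for large $U$, by tightness of $p\tilde N_p$.

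Finally, as $p\downarrow0$,
\begin{equation}
\E\bigl[e^{-\sigma^2t^2p\tilde N_p/2}\bigr]\to\int_0^\infty e^{-s}e^{-\sigma^2t^2s/2}\,ds=\frac{1}{1+t^2\sigma^2/2},
\end{equation}
because $p\tilde N_p\Rightarrow \Exp(1)$ and the integrand is bounded and continuous. This uniform-on-compacts approximation is routine. I expect the main care to be needed in the first claim, where the interchange of limits in $p$ and $m$ must be handled as above.
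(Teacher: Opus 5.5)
Your proposal is correct and follows essentially the same route as the paper: for the first claim you use the same memoryless split at a fixed horizon $m$, the bound through $\E_x\lvert e^{i\theta\tilde S_m}-1\rvert$, and total-variation convergence of $P^m(x,\cdot)$. One harmless slip there: the error is really at most $2\{1-(1-p)^m\}$ rather than $1-(1-p)^m$, because the factor $(1-p)^m$ in front of $\E_x[e^{i\theta\tilde S_m}f_p(X_m)]$ must also be removed. For the second claim you prove inline, via locally uniform convergence of the characteristic functions of $S_n/\sqrt n$ and $p\tilde N_p\Rightarrow\Exp(1)$, what the paper obtains by combining $c_p=p+(1-p)\phi_p(t)$ with its random-index lemma (Lemma~\ref{lem:random-index}(ii)).
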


The second ingredient is a bound on the $L^2$-norm of the normalized one-step difference. For $p \in (0, 1)$, define a complex-valued difference quotient
\begin{equation}\label{eq:def-Dp}
D_p(x, y) = \frac{  g_p(y) -  (P g_p)(x)  }{ i\theta c_p }.
\end{equation}
The centering by $(P g_p)(x)$ ensures that $D_p(X_0, X_1)$ has conditional mean zero given $X_0$. 

\begin{lemma}\label{lem:Dp-norm}
For any fixed $t \neq 0$ and $0 < p < 1$ such that $c_p \neq 0$, 
\begin{equation}
    \norm{ D_p }_{L^2(\Gamma)}^2  \leq \frac{ 2 |1 - c_p| }{ t^2 (1 - p)^2 |c_p| }. 
\end{equation}
\end{lemma}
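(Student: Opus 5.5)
The plan is to compute $\norm{D_p}_{L^2(\Gamma)}^2$ exactly, using a one-step renewal identity for the characteristic resolvent, and then bound the result by elementary inequalities. The only inputs are $|f_p|\le 1$, which holds because $f_p$ is a characteristic function, and Jensen's inequality.

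\emph{Step 1 (first-step decomposition).} Condition on whether $\tilde N_p=0$, an event of probability $p$ on which $\tilde S_{\tilde N_p}=0$. On the complement, $\tilde N_p-1$ has the law of $\tilde N_p$ by memorylessness, and $\tilde S_{\tilde N_p}=h(X_1)+\sum_{k=2}^{\tilde N_p}h(X_k)$. This sum is exactly $S_{N_p}$ for the chain restarted at $X_1$. The Markov property then gives
\begin{equation}
f_p(x)=p+(1-p)(Pg_p)(x), \qquad\text{i.e.}\qquad (Pg_p)(x)=\frac{f_p(x)-p}{1-p}.
\end{equation}
In addition, $|g_p|=|f_p|\le 1$, since $g_p=e^{i\theta h}f_p$.

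\emph{Step 2 (exact $L^2$ computation).} Since $(Pg_p)(X_0)=\E[g_p(X_1)\mid X_0]$, the Pythagorean identity for the conditional expectation, together with stationarity of $\pi$, gives
\begin{equation}
\E_\pi\bigl|g_p(X_1)-(Pg_p)(X_0)\bigr|^2=\pi(|g_p|^2)-\pi(|Pg_p|^2)=\pi(|f_p|^2)-\frac{\pi(|f_p-p|^2)}{(1-p)^2}.
\end{equation}
Multiply by $(1-p)^2$ and expand $|f_p-p|^2=|f_p|^2-2p\,\mathrm{Re} f_p+p^2$. The result is
\begin{equation}
-2p\,\pi(|f_p|^2)+p^2\pi(|f_p|^2)+2p\,\mathrm{Re}\,c_p-p^2\le 2p\bigl(\mathrm{Re}\,c_p-\pi(|f_p|^2)\bigr),
\end{equation}
where the inequality uses $\pi(|f_p|^2)\le 1$.

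\emph{Step 3 (closing the bound).} By Jensen, $\pi(|f_p|^2)\ge|\pi(f_p)|^2=|c_p|^2$. Moreover,
\begin{equation}
\mathrm{Re}\,c_p-|c_p|^2=\mathrm{Re}\bigl(\bar c_p(1-c_p)\bigr)\le|c_p|\,|1-c_p|.
\end{equation}
Dividing by $\theta^2|c_p|^2=t^2p|c_p|^2$ and by $(1-p)^2$ then yields $\norm{D_p}_{L^2(\Gamma)}^2\le 2|1-c_p|/\bigl(t^2(1-p)^2|c_p|\bigr)$, which is the claim.

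The only delicate point is Step 1. One must check carefully that the index shift between $\tilde S$, which starts at index $1$, and $S$, which starts at index $0$, matches the definition $g_p=e^{i\theta h}f_p$, and that memorylessness of the geometric variable is applied with the right offset. The remaining steps are routine algebra.
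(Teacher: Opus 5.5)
Your proposal is correct and follows essentially the same route as the paper. Both use the identity $f_p = p + (1-p)Pg_p$ and the conditional-variance (Pythagorean) identity $|c_p|^2\theta^2\norm{D_p}_{L^2(\Gamma)}^2 = \pi(|f_p|^2) - \pi(|Pg_p|^2)$, then Jensen and $\operatorname{Re}(\bar c_p(1-c_p)) \le |c_p|\,|1-c_p|$; the only cosmetic difference is that you drop the $p^2$ terms via $\pi(|f_p|^2)\le 1$ before applying Jensen, whereas the paper applies Jensen first and then uses $|c_p|\le 1$.
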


We defer the proofs of both lemmas to the end of this subsection and first show how they imply the two propositions. 

\begin{proof}[Proof of Proposition~\ref{prop:key-representation}] 
Using $\tilde{S}_0 = 0$ and the Markov property,
\begin{align}
f_p(x) 
 = p + \sum_{k = 1}^{\infty}p (1 - p)^k \E_x\left[ \exp\left( i \theta \tilde{S}_k \right) \right] 
 = p + (1 - p) (Pg_p)(x). 
\label{eq:f_p_identity}
\end{align} 
Taking expectation on both sides yields 
\begin{equation}
c_p = p + (1 - p) \pi(g_p) = p + (1 - p) \phi_p(t), \qquad \phi_p(t) = \E_\pi\left[ e^{i t \sqrt{p} S_{N_p}}\right]. 
    \label{eq:cp_identity}
\end{equation}
By Lemma~\ref{lem:random-index},   the family $( \sqrt{p} S_{ N_p} )_{0 < p < 1}$ is also bounded in probability, and their characteristic functions are equicontinuous at zero. In particular, we can choose $t$ such that $\sup_{0 < p < 1} |\phi_p(t) - 1| \leq 1/2$, and by~\eqref{eq:cp_identity}, this implies 
\begin{equation}\label{eq:cp-bound} 
\sup_{0 < p < 1} | c_p - 1| \leq \frac{1}{2}. 
\end{equation}   

Adding and subtracting $f_p(x)$ in the numerator of $D_p(x, y)$ yields  
\begin{align}  
D_p(x,y) + D_p(y, x) 
&= \frac{ [ e^{ i\theta h(y)  } - 1] f_p(y) }{ i\theta c_p}
+ \frac{[ e^{ i\theta h(x)  } - 1] f_p(x) }{ i\theta c_p} \\
&\quad+ \frac{ f_p(x) - (P g_p)(x) }{ i\theta c_p }
+\frac{ f_p(y) -  (P g_p)(y) }{ i\theta c_p }. \label{eq:sym-Dp}
\end{align}  
Since $|g_p(x)| \leq 1$ for every $x$ and  $\theta = t \sqrt{p}$, by~\eqref{eq:f_p_identity}, 
\begin{equation}\label{eq:bound-fp}
    \sup_{x \in \X} \left| \frac{ f_p(x) - (P g_p)(x) }{\theta} \right| \le \frac{\sqrt{p}}{|t|} \sup_{x \in \X} \left| 1 - (P g_p)(x) \right| \le   \frac{2 \sqrt{p}}{|t|}.
\end{equation}  
For the first two terms on the right-hand side of~\eqref{eq:sym-Dp}, we have 
\begin{equation}\label{eq:bound-fp-cp}
    \lim_{p \downarrow 0}  \frac{f_p(x)}{c_p} = 1, \qquad \lim_{p \downarrow 0}  \frac{   e^{ i\theta h(x)  } - 1  }{ i\theta } = h(x). 
\end{equation}
The first claim follows from~\eqref{eq:cp-bound} and Lemma~\ref{lem:fp-bound}, and the second follows from $\theta = t \sqrt{p}$ and that $\lim_{|z| \rightarrow 0} (e^z - 1)/z = 1$ for complex $z$. 
So we obtain from~\eqref{eq:sym-Dp},~\eqref{eq:bound-fp} and~\eqref{eq:bound-fp-cp} that 
\begin{equation}
\lim_{p \downarrow 0} [ D_p(x,y) + D_p(y, x) ] = h(x) + h(y), \text{ for $\Gamma$-almost every $(x,y) \in \X \times \X$.}
\label{eq:ae_D_p_limit}
\end{equation}

By~\eqref{eq:cp-bound}  and Lemma~\ref{lem:Dp-norm}, we have  
\begin{equation}
\label{eq:D_p_unif_bound_L2} 
   \sup_{0 < p \leq 1/2}  \norm{ D_p }_{L^2(\Gamma)}^2   \leq  \sup_{0 < p \leq 1/2} \frac{2}{t^2 (1 - p)^2} \leq \frac{8}{t^2}. 
\end{equation}
Hence, we can choose a sequence $(p_j)_{j \ge 0}$  such that $p_j \downarrow 0$, and then the uniform upper bound \eqref{eq:D_p_unif_bound_L2} implies that $(D_{p_j})_{j \ge 0}$ is a bounded sequence in $L^2(\Gamma, \mathbb{C})$. 
Then, possibly reindexing, there exists a subsequence $(D_{p_j})_{j \ge 0}$ weakly convergent  to a limit $D \in L^2(\Gamma, \mathbb{C})$, that is, 
\begin{equation}
    \int D_{p_j}(x, y) \overline{F(x, y)}    \Gamma (d x, dy)  \rightarrow \int D(x, y)  \overline{F(x, y) } \Gamma (d x, dy) \text{ for every } F \in L^2(\Gamma, \mathbb{C});
\end{equation}
see, for example,~\citet[Chapter 3]{kallenberg1997foundations}. 
Let $\mathcal R$ denote the   reversal operator defined for functions $F : \X \times \X \to \mathbb{C}$ by
\[
(\mathcal{R} F)(x, y) = F(y, x).
\]
By the reversibility of $P$,  $G \in L^2(\Gamma, \mathbb{C})$ implies $\mathcal{R} G \in L^2(\Gamma, \mathbb{C})$, and $\int (\mathcal{R} F) \overline{ G} \, d\Gamma = \int F \, \overline{\mathcal{R} G} \, d \Gamma$ (i.e., $\mathcal{R}$ is unitary). Hence, $\mathcal{R} D_{p_j} \to \mathcal{R} D$ weakly in $L^2(\Gamma, \mathbb{C})$, and thus 
\begin{equation}
D_{p_j} + \mathcal{R} D_{p_j}
\to D + \mathcal{R} D  \text{ weakly  in $L^2(\Gamma, \mathbb{C})$.}
\label{eq:weak_D_p_limit}
\end{equation} 
Since $\mathcal{R}$ is unitary and using the bound \eqref{eq:D_p_unif_bound_L2},
\begin{align}
\sup_{0 < p_j \leq 1/2} \norm{ D_{p_j} + \mathcal{R} D_{p_j} }_{L^2(\Gamma)}^2 \le 4 \sup_{0 < p_j \leq 1/2} \norm{D_{p_j}}_{L^2(\Gamma)}^2 \le \frac{32}{t^2}. \label{eq:D_p_RD_p_unif_bound_L2}
\end{align}

Define $H : \X \times \X \to \R$ by $ H(x, y) = h(x) + h(y).$ 
By~\eqref{eq:ae_D_p_limit}, $D_{p_j} + \mathcal{R} D_{p_j} \to  H$, $\Gamma-$almost everywhere. 
Combining~\eqref{eq:D_p_RD_p_unif_bound_L2} with Fatou's lemma implies $H \in L^2(\Gamma, \mathbb{C})$ and that 
\begin{equation}\label{eq:conv_to_H}
   D_{p_j} + \mathcal{R} D_{p_j} \to  H \text{ both strongly in $L^1(\Gamma, \mathbb{C})$ and weakly in $L^2(\Gamma, \mathbb{C})$}.  
\end{equation}
Indeed, the strong convergence in $L^1(\Gamma, \mathbb{C})$ is due to uniform integrability, which, together with H\"{o}lder's inequality, implies that as $j \to \infty$, for any bounded measurable $G$,
\begin{equation}
\int   \left( D_{p_j} + \mathcal{R} D_{p_j} \right) \overline{G} \, d\Gamma \to \int  H  \overline{G}  \, d\Gamma.
\label{eq:weak_l2_convergence}
\end{equation}
Since bounded measurable functions are dense in $L^2(\Gamma, \mathbb{C})$ and the uniform bound holds \eqref{eq:D_p_RD_p_unif_bound_L2}, \eqref{eq:weak_l2_convergence} holds also for $G \in L^2(\Gamma, \mathbb{C})$, which proves the weak convergence in $L^2(\Gamma, \mathbb{C})$ stated in~\eqref{eq:conv_to_H}. 
%For the weak convergence in $L^2(\Gamma, \mathbb{C})$, see, e.g.,~\citet[Chapter 8.2, Theorem 12]{RoydenFitzpatrick2010}.  
Since weak limits are unique,  \eqref{eq:conv_to_H} and \eqref{eq:weak_D_p_limit} imply we have the identity
\begin{equation} \label{eq:target-identity}
D + \mathcal{R} D =  H.
\end{equation}
 
Finally, define
\begin{equation}
    d(x, y) =  \operatorname{Re} \left( D(x, y)  \right).  
\end{equation}
Then $d$ is real-valued and belongs to $L^2(\Gamma)$. 
We verify that $d$ satisfies the three conditions given in the proposition statement. 
First,~\eqref{eq:target-identity}  yields 
$d(x, y) +  d(y, x) = h(x) + h(y)$ 
for $\Gamma$-almost every $(x, y) \in \X \times \X$. 
Second, by weak lower semicontinuity of the norm and~\eqref{eq:D_p_RD_p_unif_bound_L2}, 
\begin{equation}\label{eq:L2-bound-d}
    \E_\pi[ d(X_0, X_1)^2 ] = \norm{ d  }_{L^2(\Gamma)}^2  \leq \liminf_{j \to \infty} \| D_{p_j}  \|_{L^2(\Gamma)}^2 < \infty. 
\end{equation} 
Third, let $\mathcal{K} = \{ F \in L^2(\Gamma, \mathbb{C}) \colon \E[F(X_0, X_1) \mid X_0]  = 0\}$, which is a closed linear subspace. 
Since $D_p \in \mathcal{K}$ for every $p$, the weak limit $D$ also belongs to $\mathcal{K}$, which implies that $\E[ d(X_0, X_1) \mid X_0 ] = 0$.  
This concludes the proof of Proposition~\ref{prop:key-representation}. 
\end{proof}

\begin{proof}[Proof of Proposition~\ref{prop:key-representation-CLT}]
Since the $\sqrt{n}$-CLT implies $(S_n / \sqrt{n})_{n \geq 1}$ is bounded in probability, all conclusions and proof arguments for Proposition~\ref{prop:key-representation} still hold. 
It only remains to prove the improved bound for $ \norm{ d  }_{L^2(\Gamma)}^2$.  
By Lemma~\ref{lem:Dp-norm} and~\eqref{eq:L2-bound-d}, 
\begin{align} 
     \norm{ d  }_{L^2(\Gamma)}^2  
      \leq \liminf_{j \to \infty} \left\| D_{p_j}  \right\|_{L^2(\Gamma)}^2  
      \leq  \liminf_{j \to \infty} \frac{ 2 |1 - c_{p_j}| }{ t^2 (1 - p_j)^2 |c_{p_j}| } = \sigma^2,  
\end{align} 
where the last step uses the limit of $c_p$ given in Lemma~\ref{lem:fp-bound}. This concludes the proof. 
\end{proof}

\begin{proof}[Proof of Lemma~\ref{lem:fp-bound}]
By the definition of $f_p$ and the memoryless property of $N_p$, 
\begin{align*}
    f_p(x) &= \E_x\left[ e^{i \theta \tilde{S}_{\tilde{N}_p}} \one_{ \{ \tilde{N}_p < m\} } \right]  + \E_x\left[ e^{i \theta \tilde{S}_{\tilde{N}_p}} \one_{ \{ \tilde{N}_p \geq m\} } \right] \\ 
    &= \E_x\left[ e^{i \theta \tilde{S}_{\tilde{N}_p}} \one_{ \{ \tilde{N}_p < m\} } \right]  + (1 - p)^m \E_x\left[ e^{i \theta \tilde{S}_m} f_p(X_m) \right] 
\end{align*}
for every fixed $m \geq 1$. 
Let $q = 1-p$. Since $\Prob( \tilde{N}_p < m) = 1 - q^m$, we get
\begin{align*}
    \left| f_p(x) - (P^m f_p)(x)  \right| 
&\leq \left| \E_x\left[ e^{i \theta \tilde{S}_{\tilde{N}_p}} \one_{ \{ \tilde{N}_p < m\} } \right]  + q^m \E_x\left[ e^{i \theta \tilde{S}_m} f_p(X_m) \right]   - \E_x[ f_p(X_m) ] \right| \\
&\leq 2(1 - q^m) + q^m \left|  \E_x\left[ \left(e^{i \theta \tilde{S}_m} - 1 \right) f_p(X_m) \right]  \right|  \\
&\leq 2(1 - q^m) + q^m   \E_x \left|e^{i \theta \tilde{S}_m} - 1 \right| \leq 4. 
\end{align*}
With $\theta = t \sqrt{p}$, we apply the dominated convergence theorem to the preceding bound to obtain
\begin{align*}
   \lim_{p \downarrow 0} \left| f_p(x) - (P^m f_p)(x)  \right| = 0. 
\end{align*}
Meanwhile, since $|f_p(x)| \leq 1$ for every $x$ and $c_p = \pi(f_p)$, by the variational characterization of the total variation distance, 
\begin{align*}
   \left|  (P^m f_p)(x)  - c_p \right|  \leq 2 \| P^m(x, \cdot) - \pi \|_{\rm TV}.
\end{align*}
Since $P$ is ergodic,  $\| P^m(x, \cdot) - \pi \|_{\rm TV} \rightarrow 0$ for $\pi$-almost every $x$ as $m \rightarrow \infty$. Applying the triangle inequality and letting $m \rightarrow \infty$ shows that $|f_p(x) - c_p|$ goes to $0$. 

Now assume the $\sqrt{n}$-CLT exists. %By Proposition 21.1.3 of~\citet{DoucMoulinesPriouretSoulier2018}, the same $\sqrt{n}$-CLT holds under the initial distribution $\delta_x$, for $\pi$-almost every $x$. 
By Lemma~\ref{lem:random-index} and~\eqref{eq:cp_identity},
\begin{align*}
    \lim_{p \downarrow 0} c_p = \lim_{p \downarrow 0} \phi_p(t) = \frac{1}{1 +  t^2 \sigma^2/2}, 
\end{align*} 
which concludes the proof. 
\end{proof}

\begin{proof}[Proof of Lemma~\ref{lem:Dp-norm}]
    Since $| g_p(x) | = |f_p(x)|$ and using \eqref{eq:f_p_identity}, the   conditional variance  of $D_p$ is 
\begin{align*}
|c_p|^2   \theta^2 \, \E  \left[  |D_p(X_0, X_1)|^2 \mid X_0 \right]  
&=  \E \left[  |g_p(X_1) -  (P g_p)(X_0) |^2 \mid X_0 \right]  \\ 
&=  \E [ |f_p(X_1)|^2 \mid X_0] -   |(P g_p)(X_0)|^2.
\end{align*}
Taking expectation over $X_0 \sim \pi$ yields 
\begin{align*}
|c_p|^2   \theta^2 \norm{ D_p }_{L^2(\Gamma)}^2
&= \norm{f_p}_{L^2(\pi)}^2 - \norm{ P g_p }_{L^2(\pi)}^2.
\end{align*} 
Using the identity \eqref{eq:f_p_identity} and $\norm{ f_p - p }_{L^2(\pi)}^2 = \norm{ f_p }_{L^2(\pi)}^2 - 2 p \operatorname{Re}({\pi(f_p)}) + p^2$, we get 
\begin{align*}
\norm{ D_p }_{L^2(\Gamma)}^2
&= \frac{1}{|c_p|^2 \theta^2} \left[ \norm{f_p}_{L^2(\pi)}^2 - \frac{1}{(1-p)^2}\norm{ f_p - p }_{L^2(\pi)}^2 \right] \\
&= \frac{2 \operatorname{Re}(c_p) - \norm{f_p}_{L^2(\pi)}^2 (2 - p)   - p}{t^2 (1-p)^2 |c_p|^2}. 
\end{align*}
Since $\norm{f_p}_{L^2(\pi)}^2 \geq |\pi(f_p)|^2 = |c_p|^2$, we have 
\begin{align*}
 2 \operatorname{Re}(c_p) - \norm{f_p}_{L^2(\pi)}^2 (2 - p)   - p
& \leq 2 \operatorname{Re}(c_p) - |c_p|^2 (2 - p)   - p  \\
& \leq 2  ( \operatorname{Re}(c_p) - |c_p|^2) \\
& = 2  \operatorname{Re}( \overline{c_p}(1 - c_p) )   \\ 
& \leq 2 |c_p| |1 - c_p|. 
\end{align*}
The claimed bound then follows. 
\end{proof}

\subsection{An Alternative Proof for HR Problem 6}\label{sec:P6}

Although Theorem~\ref{thm:P2} readily resolves HR Problem 6, it does not provide any upper bound on the asymptotic variance $\sigma^2$. 
But when $h \in L^2(\pi)$, additional spectral and regenerative tools become available. This motivates us to provide an alternative proof which yields an upper bound on $\sigma^2$ by reversing the standard arguments used to establish the $\sqrt{n}$-CLT. 
 
To explain our bound, we first recall the minorization condition and the resulting regenerative construction of the chain $P$. By \citet[Theorem 5.2.1]{meyn:tweedie:2009}, we can choose an integer $m \ge 1$, a measurable set $\C$ with $\pi( \C ) > 0$, and a constant $\beta \in (0 , 1]$ such that 
\begin{align*}
P^m(x, \B) \ge \beta \one_{\C}(x) \pi_{\C}(\B), \quad x \in \X, \, \B \in \F, 
\end{align*} 
where the probability measure $\pi_{\C}$ is defined by $\pi_{\C}(\cdot) = \pi(\cdot \cap \C)/\pi(\C) $.  
Note that  \citet[Theorem 5.2.1]{meyn:tweedie:2009} states the minorization condition as $P^m(x, \B) \geq \delta \pi(\B)$ for $x \in \C$ and $\B \subset \C$, and we have renormalized the minorization constant by $\beta = \delta \pi(\C)$.   Define the  residual kernel 
\begin{equation}\label{eq:def-residual-kernel}
    L(x, \B) = P^m(x, \B) - \beta \pi_\C(\B) \one_\C(x).
\end{equation} 

Define the skeleton chain $(Y_k)_{k \ge 0}$ by $Y_k = X_{km}$.
Define the $m$-step block-sum function $g$ and the corresponding partial sum by  
\begin{equation}\label{eq:def-block-reward}
    g = \sum_{j = 0}^{m-1} P^j h, \qquad T_n = \sum_{k=0}^{n-1} g(Y_k). 
\end{equation} 
The function $g$ gives the expected sum of $h(X_n)$ over the next $m$ steps conditional on the current state.  
Since $h \in L^2(\pi)$ and $P$ is an $L^2(\pi)$-contraction, $g \in L^2(\pi)$. 
We construct a split chain by lifting the skeleton chain up a dimension with Bernoulli random variables~\citep[Chapters 5 and 13]{meyn:tweedie:2009}. Define $(I_k, Y_k)_{k \ge 0}$ with $I_k \in \{0, 1\}$ by the transition probabilities  
\begin{align*}
&\Prob( I_k = 1, Y_{k + 1} \in \B \mid Y_{k} = x) = \beta \pi_\C(\B) \one_{\C}(x)
\\
&\Prob( I_k = 0, Y_{k + 1} \in \B \mid Y_{k} = x ) = L(x, \B).
\end{align*}
Marginally $(Y_k)_{k \ge 0}$ still has transition kernel $P^m$.  
Unless otherwise stated, the split chain is initialized with $Y_0\sim\pi$, and $\E$ denotes expectation under the resulting split-chain law. When $Y_0\sim\mu$, we write $\E_\mu$.
Define regeneration times  $1 \le \tau_1 < \tau_2 < \cdots$ by 
\[
\{\tau_i : i \ge 1 \} = \{ k + 1 : I_k = 1, \, k \geq 0 \}.
\]
For $i \ge 1$, define a sequence of i.i.d. random variables by
\begin{equation}\label{eq:def-Zi-reward}
    Z_i = \sum_{k = \tau_i}^{\tau_{i + 1} - 1} g(Y_k).
\end{equation}   
Each $Z_i$ is the sum of $g(Y_k)$ accumulated over a regeneration cycle. The length of each cycle, $\tau_{i + 1} - \tau_i$, has a finite mean, but its second moment is not necessarily finite. 
However, it can be proved that $\E[Z_1^2]$ is finite under the tightness assumption, which further yields an upper bound on $\sigma^2$. The proof of the following theorem is deferred to the Appendix. 

\begin{theorem}\label{thm:tight-variance-converse}  
Let $P$ be reversible and ergodic and $(X_n)_{n \geq 0}$ be stationary. Let $h \in L^2(\pi)$ with $\pi(h) = 0$ and $\beta, \C, g, (Y_k)_{k \geq 0},  (T_n)_{n \geq 1}, (Z_i)_{i \geq 1}$ be as defined above.  
If $(S_n / \sqrt{n})_{n \geq 1}$  is bounded in probability, then  
\begin{align}
    \frac{S_n}{\sqrt{n}} \Longrightarrow \Normal(0, \sigma^2), \qquad 
     \frac{T_n}{\sqrt{n}}   \Longrightarrow \Normal(0, \sigma_g^2), 
\end{align}
where $\sigma^2 = A = B = C$ and 
\begin{equation}
    \sigma^2 \leq \sigma_g^2 = \beta \pi(\C) \E[Z_1^2] < \infty.
\end{equation} 
\end{theorem}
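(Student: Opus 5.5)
The plan is to first transfer tightness from $S_n$ to the skeleton sums $T_n$, then use regeneration to show $\E[Z_1^2]<\infty$, and finally pass back to $h$ through the spectral representation, so that the Kipnis--Varadhan theorem (Theorem~\ref{thm:prelim}(i)) and \citet[Theorem 4]{HaggstromRosenthal2007} give the CLT for $S_n$ with $\sigma^2=A=B=C$.

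\emph{Step 1: from $S$ to $T$.} Write $S_{nm}=\sum_{k=0}^{n-1}W_k$ with block sums $W_k=\sum_{j=0}^{m-1}h(X_{km+j})$. By the Markov property, $\E[W_k\mid \F_{km}]=g(Y_k)$, so $S_{nm}-T_n=\sum_{k}(W_k-g(Y_k))$ is a sum of square-integrable martingale differences. Its variance is at most $n\,\E_\pi[W_0^2]\le n m^2\pi(h^2)$. Hence $(T_n/\sqrt n)$ is bounded in probability whenever $(S_n/\sqrt n)$ is.

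\emph{Step 2: regeneration.} By Kac-type identities for the split chain, $\E[\tau_2-\tau_1]=1/(\beta\pi(\C))$, and $\E[Z_1]=\pi(g)\,\E[\tau_2-\tau_1]=0$ (Wald's identity with $g\in L^1$). The sums $\sum_{i\le k}Z_i$ are i.i.d.\ sums. I would show that tightness of $T_n/\sqrt n$ forces tightness of $k^{-1/2}\sum_{i\le k}Z_i$. To do so:
\begin{itemize}
\item Sample $T$ at the regeneration times, using $\tau_k/k\to 1/(\beta\pi(\C))$ almost surely.
\item Control the discrepancy between $T_{\tau_k}$ and $T$ at nearby deterministic times. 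This is the delicate part, since tightness at deterministic $n$ does not immediately transfer to random times. Here I would invoke the cycle-sum results of \citet{Glynn1997} and \citet{bednorz:etal:2008}, which show that for regenerative processes the $\sqrt n$-CLT/tightness of $T_n$ is equivalent to $\E[Z_1^2]<\infty$. If needed, this can be backed by a symmetrization argument: after independent symmetrization, one can use that a tight normalized i.i.d.\ sum has a finite second moment (Feller).
\end{itemize}
This yields $\E[Z_1^2]<\infty$, and the regenerative CLT then gives $T_n/\sqrt n\Rightarrow\Normal(0,\sigma_g^2)$ with $\sigma_g^2=\beta\pi(\C)\E[Z_1^2]$. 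This step does not require $\E[(\tau_2-\tau_1)^2]<\infty$, and it is the main obstacle I anticipate.

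\emph{Step 3: back to $h$ via spectra.} Since $P^m$ is reversible and $g=\sum_{j<m}P^jh$, the spectral measure of $g$ under $P^m$ is obtained from $\mathcal E_h$ by pushing the factor $(\sum_{j<m}\lambda^j)^2$ through. Therefore
\[
C_g=\int\frac{(1+\lambda^m)(1-\lambda^m)}{(1-\lambda)^2}\,\mathcal E_h(d\lambda).
\]
By Theorem~\ref{thm:prelim}(ii) applied to the skeleton, together with finiteness of $\E[Z_1^2]$, we get $C_g=\sigma_g^2<\infty$. To pass to $C$, compare the integrands. On $[0,1)$, since $(1-\lambda^m)/(1-\lambda)\ge1$ and $(1+\lambda^m)/(1+\lambda)\ge 1/2$, the integrand of $C_g$ dominates a constant multiple of $(1+\lambda)/(1-\lambda)$. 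On $[-1,0)$, $(1+\lambda)/(1-\lambda)\le 1$ is bounded. Hence $C<\infty$, and Kipnis--Varadhan together with HR Theorem 4 yield $S_n/\sqrt n\Rightarrow\Normal(0,\sigma^2)$ with $\sigma^2=A=B=C$.

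The sharp inequality $\sigma^2\le\sigma_g^2$ is a pointwise spectral comparison. I would try to prove it directly, checking that $(1+\lambda)/(1-\lambda)\le(1+\lambda^m)(1-\lambda^m)/(1-\lambda)^2$, i.e.\ $(1+\lambda)(1-\lambda)\le(1-\lambda^{2m})$, on the relevant range. This holds for $\lambda\in[0,1]$ and needs a separate check for negative $\lambda$. If it fails there, I would fall back on the martingale decomposition of Step 1 to compare $\Var(S_{nm})$ with $\Var(T_n)$ asymptotically.
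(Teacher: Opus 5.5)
\textbf{Steps 1 and 2.} These essentially follow the paper (Proposition~\ref{prop:skeleton-reduction} and Proposition~\ref{lem:finite-Z1-square}). For Step 2, though, the workable direction is the paper's, not sampling $T$ at the times $\tau_k$. Work at a deterministic $n$ and write $T_n=R^-+\sum_{i\le\widetilde\omega_n}Z_i+R_n^+$. The edge terms are $o_P(\sqrt n)$ because the age $n-\widehat\tau_n$ is tight under stationarity. Then apply Lemma~\ref{lem:bednorz} to the random index $\widetilde\omega_n$; alternatively, use \citet{Chen1999} together with Lemma~\ref{lem:direct-glynn-whitt}. Identifying $\sigma_g^2=\beta\pi(\C)\E[Z_1^2]$ through Glynn--Whitt is then fine.

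\textbf{The gap in Step 3.} It sits at the sentence ``by Theorem~\ref{thm:prelim}(ii) applied to the skeleton, together with finiteness of $\E[Z_1^2]$, we get $C_g=\sigma_g^2<\infty$.'' Theorem~\ref{thm:prelim}(ii) only gives $A_g=B_g=C_g\in[0,\infty]$, where $A_g=\lim_n n^{-1}\Var_\pi(T_n)$. It says nothing linking this second-moment limit to the distributional limit $\sigma_g^2$. Weak convergence of $T_n/\sqrt n$ gives no control of $\E[T_n^2]/n$. Deducing $C_g<\infty$ from a $\sqrt n$-CLT for a reversible chain is precisely the converse of Kipnis--Varadhan (HR Problem~6) for $(P^m,g)$, which is what you are trying to prove.

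Nor does $\E[Z_1^2]<\infty$ give a linear bound on $\Var_\pi(T_n)$ directly. Complete cycles are harmless. The partial cycles at the two ends, however, carry sums governed by $u(Y_0)$ and $u(Y_n)$, where $u=\sum_{j\ge0}L^jg$. This $u$ need not lie in $L^2(\pi)$, and $\tau_2-\tau_1$ need not have a second moment. These edge contributions are exactly what can spoil $\sup_n n^{-1}\Var_\pi(T_n)<\infty$.

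\textbf{The missing idea.} It is the paper's forward--backward step. Use the Poisson equation $g=u-P^mu$ (Lemma~\ref{lem:Poisson}) and reversibility of $P^m$ to write $T_n=\tfrac12\sum_{j=1}^n(\xi_j+\xi_j^*)+\tfrac12\{g(Y_0)-g(Y_n)\}$. Here the endpoint terms involve only $g\in L^2(\pi)$, not $u$. Combine this with the energy identity $\E[\xi_1^2]=\beta\pi(\C)\E[Z_1^2]$ (Proposition~\ref{lem:key-energy-identity}), which is proved by an $L^2$-martingale argument assuming only $\E[Z_1^2]<\infty$. Together these give $\limsup_n n^{-1}\Var_\pi(T_n)\le\beta\pi(\C)\E[Z_1^2]$, hence $C\le C_g\le\beta\pi(\C)\E[Z_1^2]<\infty$. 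Your spectral comparison and Kipnis--Varadhan then finish the proof.

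You could instead close the gap by applying Theorem~\ref{thm:P2} to $(P^m,g)$, which gives $A_g=\sigma_g^2$ directly. But then the argument is no longer independent of the main result, which is the point of this alternative proof.

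\textbf{Two minor points.} The comparison you were unsure about holds on all of $[-1,1]$, since it reduces to $\lambda^{2m}\le\lambda^2$. You also need $\mathcal{E}_h(\{1\})=0$ (Lemma~\ref{lem:direct-endpoints}) to handle the endpoint $\lambda=1$.
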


\section{HR Problems 3, 5 and 7: Implications of Roberts' Condition}  

\subsection{A Holding-Time Lower Bound on $\sigma^2$}\label{sec:P3}
 
\begin{theorem}[Solution to HR Problem 3]\label{thm:holding-noise-bound}
Let $P$ be ergodic and $(X_n)_{n \ge 0}$ be stationary. 
Let $h\in L^1(\pi)$ with $\pi(h) = 0$.  
Define $r(x)=P(x,\{x\})$ and $q(x)=1-r(x).$
If $(S_n/ \sqrt{n})_{n \geq 1}$ is bounded in probability, then $h=0$ on $\{q=0\}$ $\pi$-almost surely, and
\begin{equation}\label{eq:holding-energy-bound}
\int_{\{q>0\}}h(x)^2\frac{r(x)}{q(x)}\,\pi(dx) < \infty.
\end{equation}
Consequently,
\begin{equation}\label{eq:roberts-limit-zero}
\lim_{n\to\infty}n\E_\pi\left[h(X_0)^2r(X_0)^n\right]=0.
\end{equation}
If  $S_n/ \sqrt{n} \Rightarrow \Normal(0,\sigma^2)$ for $0 \leq \sigma^2 < \infty$, then we also have 
\begin{equation} \label{eq:holding-energy-bound-CLT}
\int_{\{q>0\}}h(x)^2\frac{r(x)}{q(x)}\,\pi(dx) \leq \sigma^2.
\end{equation}
\end{theorem}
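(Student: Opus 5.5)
The plan is to follow the hint in the solution to HR Problem 3: condition on the sequence of distinct states visited (the jump chain) and integrate out the geometric holding times. Write the trajectory as successive distinct states $W_0, W_1, \dots$ with holding times $H_0, H_1, \dots$. Given the jump chain, the $H_j$ are conditionally independent, and $H_j \sim \Geom(q(W_j))$ on $\{1,2,\dots\}$. On $\{q=0\}$ the chain stays put forever, and the states with $q=0$ are absorbing.

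\textbf{Step 1: the set $\{q=0\}$.} Ergodicity forces $\pi(\{q=0\})$ to be carried by at most an atom that is the whole support. More simply, if $\pi(q=0)>0$ and $h\neq0$ on a positive-measure part of it, then on that event $S_n/\sqrt n = \sqrt n\, h(X_0)$, which diverges. This contradicts tightness, so $h=0$ on $\{q=0\}$ $\pi$-a.s.

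\textbf{Step 2: conditional characteristic function.} Let $\mathcal{J}$ denote the jump-chain $\sigma$-field. Then $S$ summed over the first $m$ complete holding blocks equals $\sum_{j<m} H_j h(W_j)$. For the geometric distribution with success probability $q$, the characteristic function of $H$ has modulus
$$\Bigl|\frac{q e^{is}}{1-(1-q)e^{is}}\Bigr| = \Bigl(1+\frac{2r(1-\cos s)}{q^2}\Bigr)^{-1/2} \le \exp\Bigl(-c\,\min\bigl\{1,\tfrac{r}{q^2}s^2\bigr\}\Bigr).$$
Multiplicativity gives
$$\bigl|\E[e^{i\theta S_{\text{blocks}}}\mid\mathcal J]\bigr|\le\exp\Bigl(-c\sum_{j<m}\min\{1,\theta^2 h(W_j)^2 r(W_j)/q(W_j)^2\}\Bigr).$$
This is the content I expect Proposition~\ref{prop:holding-contraction} to record. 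The fiddly part is handling the partial first and last blocks at deterministic time $n$. I would avoid this by randomizing: use a geometric time $N_p$ as in Section~\ref{sec:prop-key-representation}, or use the block count $m \approx \kappa n$ with $\kappa$ the stationary jump rate $\pi(q)$. The boundary blocks contribute a term that is conditionally independent and has bounded modulus, so it can be dropped using a one-sided bound.

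\textbf{Step 3: from tightness to the integral.} Take $\theta=t/\sqrt n$. Tightness together with equicontinuity of the characteristic functions at $0$ makes $\E|\E[e^{i\theta S_n}\mid\mathcal J]|$ bounded below by $1/2$ for small $t$, uniformly in $n$. By the ergodic theorem for the jump chain, whose stationary law is $\pi_J(dx)\propto q(x)\pi(dx)$, we have
$$\frac1m\sum_{j<m}\min\{1,\tfrac{t^2}{n}h^2r/q^2\}(W_j)\approx \frac{\int q\min\{1,t^2 h^2 r/(nq^2)\}\,d\pi}{\pi(q)}.$$
With $m\approx \pi(q) n$, the exponent is about
$$\int \min\Bigl\{nq,\ t^2h^2\frac{r}{q}\Bigr\}\,d\pi.$$
This increases by monotone convergence to $t^2\int_{\{q>0\}}h^2 r/q\,d\pi$. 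If the integral were infinite, the conditional modulus would tend to $0$ in probability, which contradicts the lower bound and proves \eqref{eq:holding-energy-bound}. Under the CLT, I would compare with $e^{-t^2\sigma^2/2}$. Taking $t\to0$, Jensen gives
$$e^{-t^2\sigma^2/2}=\lim|\E e^{i\theta S_n}|\le \lim \E|\E[\cdot\mid\mathcal J]| \le \exp(-c' t^2 I),$$
where $I$ denotes the integral in \eqref{eq:holding-energy-bound-CLT}. For the sharp constant I need the exact expansion $\log|\phi_H(s)|\approx -\tfrac{r}{2q^2}s^2$ for small $s$ rather than a crude $c$; this gives $I\le\sigma^2$. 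The main obstacle is this step: making the ergodic-average approximation rigorous uniformly in the truncation, and getting the exact constant. I would first prove everything with the truncation level fixed (replace $h^2 r/q^2$ by its truncation at level $K$ so that a quadratic Taylor bound is uniform), then let $K\to\infty$.

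\textbf{Step 4: Roberts' limit.} Finally, $n r^n \le \sup_n n r^n$. We have $n r^n q \le n r^n(1-r)\le 1$, and $n r^{n}\le r/q$ up to a constant for all $r<1$, since $n r^{n-1}(1-r)\le 1$. Moreover $n r^n\to0$ pointwise on $\{q>0\}$. Dominated convergence with the dominating function $h^2 r/q\in L^1(\pi)$, together with $h=0$ on $\{q=0\}$, then yields \eqref{eq:roberts-limit-zero}.
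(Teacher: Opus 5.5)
\textbf{There is a genuine gap in Steps 2--3.} It sits where you pass from the hypothesis on $S_n$ at deterministic times to the product formula.

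At a deterministic time $n$, conditioning on the jump chain $\mathcal J$ does \emph{not} give a product structure. The number $K_n$ of holding blocks completed by time $n$, and the length $n-\sum_{j<K_n}H_j$ of the truncated last block, are functions of the holding times themselves. So
\[
S_n=\sum_{j<K_n}H_jh(W_j)+\Bigl(n-\sum_{j<K_n}H_j\Bigr)h(W_{K_n})
\]
is not a sum of conditionally independent pieces plus an independent boundary term, and the ``one-sided bound'' that drops the boundary factor is not available.

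If you instead fix the block count $m$, the product formula is exact. But the quantity is then $S_{\tau_m}$ with $\tau_m=\sum_{j<m}H_j$, a path-dependent random time. Tightness (or the CLT) of $S_n/\sqrt n$ along deterministic $n$ does not give tightness of $S_{\tau_m}/\sqrt m$. That would need an Anscombe-type control of $\sup_{|k-n|\leq\delta n}|S_k-S_n|/\sqrt n$, which the hypothesis does not provide, especially with only $h\in L^1(\pi)$ and no reversibility. So neither the lower bound $\E|\E[e^{i\theta S}\mid\mathcal J]|\geq 1/2$ nor the limit $e^{-t^2\sigma^2/2}$ is justified for the object your product bound actually controls.

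\textbf{The fix is the first option you mention, carried through consistently.} Take $N_p$ independent of the chain, i.e.\ kill after each step with probability $p$.
\begin{itemize}
\item By competing risks, every observed holding period, including the final killed one, is $\Geom(q(x)+pr(x))$. These periods are conditionally independent given $\sigma(K_p,W_0,\dots,W_{K_p})$, so the product formula is exact with no boundary term.
\item Independence of $N_p$ lets Lemma~\ref{lem:random-index} transfer tightness to $\sqrt p\,S_{N_p}$. Under the CLT it gives the limit $1/(1+\sigma^2t^2/2)$, not $e^{-t^2\sigma^2/2}$.
\item On the other side, the exponent $p\sum_{j\leq K_p}a_m(W_j)$ converges in law to $TG_m$ with $T\sim\Exp(1)$, not to a constant, because $pN_p$ does not concentrate. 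This yields the Laplace-transform bound $1/(1+t^2G_m/2)$.
\item Comparing the two gives $G_m\leq 2/t_0^2$, respectively $G_m\leq\sigma^2$.
\end{itemize}
This is exactly the paper's route (Proposition~\ref{prop:holding-contraction} and Lemma~\ref{lem:uniform-contraction}).

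\textbf{Smaller point.} Your bound $\exp(-c\min\{1,rs^2/q^2\})$ is false for $|s|>\pi$, since $1-\cos s$ vanishes at multiples of $2\pi$. You must truncate $|h|$ before using it, as the paper does via $\Lambda_m$.

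Steps~1 and~4 are fine and match the paper.
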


Conditional on the sequence of distinct states visited by the chain, the holding times at each state are independent geometric random variables.
The holding time $L$ at state $x$ has conditional variance $\Var(L\mid x)= r(x)/q(x)^2.$  Multiplying it by stationary frequency of new holding periods $q(x)\pi(dx)$ and the squared contribution $h(x)^2$ yields 
\begin{equation}\label{eq:holding-noise-density}
 q(x)\pi(dx)\,h(x)^2\frac{r(x)}{q(x)^2} = h(x)^2\frac{r(x)}{q(x)}\pi(dx),
\end{equation}
which can be thought of as the variance per unit time generated purely by the geometric holding times. This is exactly the integrand in~\eqref{eq:holding-energy-bound}. Moreover, since these geometric fluctuations are conditionally independent, they cannot be canceled by the subsequent evolution of the jump chain.  
The main step of our proof is to turn this observation into an upper bound on $| \phi_p(t) |$, as formalized in the following proposition; see Section~\ref{sec:prop-holding-contraction} for the proof. Since this inequality arises entirely from the holding times, it does not involve second moment of $S_n$.

As in the proof of Proposition~\ref{prop:key-representation},  introduce the geometric random variable $N_p$ independent of $(X_n)_{n \geq 0}$ with density given by~\eqref{eq:density-Np}.  
Define the characteristic function of $\sqrt{p} S_{N_p}$ by
\begin{equation}
      \phi_p(t) = \E_\pi[  e^{i t \sqrt{p} S_{N_p}} ].  
\end{equation} 
For $m\geq1$, define the truncation set $\Lambda_m$ and bounded weight $a_m(x)$ by 
\begin{equation}\label{eq:Am-am-definition}
\Lambda_m=\left\{x:q(x)\geq m^{-1},\ |h(x)|\leq m\right\}, \qquad 
a_m(x)=\frac{h(x)^2r(x)}{q(x)^2}\one_{\Lambda_m}(x). 
\end{equation}
\begin{proposition}\label{prop:holding-contraction}
For every $m \geq 1$ and $t \neq 0$, we have 
\begin{equation}
    \limsup_{p \downarrow 0} |\phi_p(t)| \leq \frac{1}{1 + t^2 G_m / 2},
\end{equation}
where $G_m$ is defined by 
\begin{equation}\label{eq:def-Gm}
   G_m \coloneqq \int q(x)a_m(x)\,\pi(dx) = \int_{\Lambda_m} h(x)^2\frac{r(x)}{q(x)}\,\pi(dx). 
\end{equation}
\end{proposition}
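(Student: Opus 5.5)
We plan to condition on the jump chain and integrate out the holding times, exactly as the heuristic above suggests. Write the stationary path as a sequence of holding periods: distinct consecutive states $W_0, W_1, \dots$ (with $W_{j+1}\neq W_j$), with holding lengths $L_0, L_1, \dots$. Conditional on the jump sequence, the lengths $L_j$ for $j\geq1$ are independent, and $L_j\sim\Geom(q(W_j))$ on $\{1,2,\dots\}$. The initial holding period of the stationary chain is handled separately; it contributes only a bounded modulus factor, which we bound by $1$. Since $N_p$ is geometric and independent, it is convenient to view the chain as killed at each step with probability $p$. The memoryless property lets us treat killing step by step.

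Concretely, we define for each $x$ a per-holding-period quantity. Starting a holding period at state $x$, the chain either is killed during the period or completes it. For the completed case, the contribution to $\E[e^{i\theta S}]$ is
\begin{equation}
\E\bigl[e^{i\theta h(x)L}(1-p)^{L}\bigr], \qquad L\sim\Geom(q(x)).
\end{equation}
The killed case has total probability weight $O(p/q(x))$. By a renewal/first-step argument on the embedded jump chain, $\phi_p(t)$ becomes a sum over the number $J$ of completed holding periods before killing, and each term has modulus at most $\prod_j \bigl|\E[e^{i\theta h(W_j)L_j}(1-p)^{L_j}]\bigr|$ times a killing factor. For $x\in\Lambda_m$ (so $q\ge 1/m$ and $|h|\le m$, hence $\theta h(x)\to0$ uniformly), a second-order expansion of the geometric characteristic function gives
\begin{equation}
\bigl|\E[e^{i\theta h L}(1-p)^{L}]\bigr| \le 1 - \frac{p}{q(x)} - \frac{\theta^2}{2}\,h(x)^2\frac{r(x)}{q(x)^2}\bigl(1+o(1)\bigr) + o(p).
\end{equation}
The first-order $i\theta h/q$ term contributes only a phase, which the modulus removes. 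The error is uniform over $\Lambda_m$ as $p\downarrow0$, using $\theta^2=t^2p$. Outside $\Lambda_m$ we use only $|\cdot|\le 1-$(killing) to keep the killing term. In time units, a holding period at $x$ lasts $1/q(x)$ steps, so the per-step decay rate of the modulus is $p+\tfrac{t^2p}{2}a_m(x)q(x)$ on average.

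We then sum over time. Taking $\E$ over the jump chain and using $\log$ of products, we need an ergodic average of $a_m(X_k)$ along the trajectory up to the killing time. For a killing time of order $1/p$, the ergodic theorem for the stationary ergodic chain gives $\frac1n\sum_{k<n}a_m(X_k)\to\pi(a_m\,\cdot)$ in an appropriate time-weighted sense. Because $a_m$ is bounded, this holds in $L^1$ and, after the $\Geom$ averaging, also along $N_p$. Summing over $n$ against the geometric weights gives, by Jensen/convexity (or dominated convergence), that $|\phi_p(t)|$ is asymptotically at most $\sum_n p(1-p)^{n-1}e^{-t^2 p\, n\, G_m/2}\to \int_0^\infty e^{-s}e^{-t^2G_m s/2}\,ds = 1/(1+t^2G_m/2)$. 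The quantity per unit time is $q\,a_m$ integrated against $\pi$, namely $G_m$. The asymmetry from the stationary initial period and the last, incomplete period is negligible, since each carries a bounded modulus factor.

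The main obstacle is making the conditional factorization rigorous. Given the whole jump sequence, the holding times are independent, but the kill time $N_p$ interacts with them, and the value $S_{N_p}$ includes a truncated final holding period. We would first try to avoid this by working with the resolvent directly: let $u(x)$ be the modulus-type bound for $\E_x[e^{i\theta S_{N_p}}]$ and derive a one-period renewal inequality $|F(x)|\le \kappa_p(x)+\rho_p(x)\sup|F|$-type relations. Alternatively, and more robustly, we would replace the modulus by the conditional expectation of $e^{i\theta S_{N_p}}$ given the jump chain and $N_p$-independent uniforms. We would then bound it by the product of per-period factors, with the ergodic-theorem step applied to the time-change $\sum_j L_j$. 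Obtaining the uniform $o(1)$ errors requires precisely the truncation to $\Lambda_m$.
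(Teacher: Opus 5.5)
There is a genuine gap, and it sits where you point: the passage from per-period factors to a bound in terms of the killing time is never made rigorous, and the repairs you list do not close it.

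\begin{itemize}
\item Conditioning on the jump chain together with the killing randomness (equivalently $N_p$) destroys the product structure. Given $N_p$, the completed periods and the truncated one are tied by the constraint that they add up to $N_p$.
\item A sup-norm renewal inequality $|F(x)|\le\kappa_p(x)+\rho_p(x)\sup|F|$ is controlled by the worst state. It cannot produce the $\pi$-average $G_m$.
\item Jensen cannot replace a random exponent by its mean, since $\E e^{-X}\ge e^{-\E X}$ goes the wrong way.
\item Your additive expansion $1-p/q-\tfrac{t^2p}{2}a_m(1+o(1))+o(p)$ mixes the killing and holding contributions. It leaves no clean way back to the law of the killing time.
\item As written, the ergodic step gives the wrong constant: $\frac1n\sum_{k<n}a_m(X_k)\to\pi(a_m)$, not $G_m=\pi(qa_m)$.
\end{itemize}

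Your first route (sum over the number $J$ of completed periods, then the triangle inequality) is sound, but it needs one idea you are missing: the competing-risks factorization. Put $d_p=q+pr$, $u=\theta h(x)$, $L\sim\Geom(q(x))$ and $L'\sim\Geom(d_p(x))$. Then exactly
\[
\E\bigl[e^{iuL}(1-p)^L\bigr]=\Bigl(1-\frac{p}{d_p(x)}\Bigr)\E\bigl[e^{iuL'}\bigr],
\qquad
\sum_{\ell\geq1}\bigl((1-p)r(x)\bigr)^{\ell-1}p\,e^{iu\ell}=\frac{p}{d_p(x)}\,\E\bigl[e^{iuL'}\bigr].
\]
Here $1-p/d_p$ and $p/d_p$ are precisely the probabilities that a period at $x$ ends by a jump or by killing.

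Hence your bound from the sum over $J$ equals $\E\bigl[\prod_{j=0}^{K_p}e^{-b_{p,t}(Y_j)}\bigr]$. The truncated last period carries the same holding modulus as every other period, and the killing weights reassemble into the law of $K_p$. This is what the paper uses directly: given $\sigma(K_p,Y_0,\dots,Y_{K_p})$, all observed holding times, including the killed one, are independent $\Geom(d_p(Y_j))$. So $|\Psi_p(t)|=\prod_j e^{-b_{p,t}(Y_j)}$.

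A multiplicative bound $b_{p,t}\ge p(t^2/2-\eta)a_m$, uniform in $x$, then avoids accumulating additive errors over order $1/p$ periods.

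For the ergodic step, count each period once through the pathwise identity
\[
\sum_{j\le K_p}a_m(Y_j)=a_m(X_0)+\sum_{k=1}^{N_p-1}a_m(X_k)\one_{\{X_k\neq X_{k-1}\}}.
\]
Its summands have stationary mean $\pi(qa_m)=G_m$. Birkhoff's theorem along the independent index $N_p$, together with $pN_p\Rightarrow\Exp(1)$ and bounded convergence, gives $1/(1+(t^2/2-\eta)G_m)$; then let $\eta\downarrow0$.

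No special treatment of the initial period is needed: by the Markov property, the holding time at $X_0$ is already geometric.
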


\begin{proof}[Proof of Theorem~\ref{thm:holding-noise-bound}]
 
By Lemma~\ref{lem:random-index},  $(\sqrt{p} S_{N_p})_{0 < p < 1}$ is  bounded in probability with equicontinuous characteristic functions, which implies that there is some $t_0 \neq 0$ such that 
\begin{equation}\label{eq:unif-cont-chf}
    \inf_{0 < p < 1} | \phi_p(t_0) | \geq \frac{1}{2}. 
\end{equation}
It then follows from Proposition~\ref{prop:holding-contraction} that 
\begin{equation}\label{eq:chf-comparison}
    \frac{1}{2} \leq \frac{1}{1+ t_0^2 G_m / 2}, 
\end{equation}
and thus $G_m \leq 2 / t_0^2$. 
The sets $(\Lambda_m)$ increase to $\{q>0\}$, so by the monotone convergence theorem, 
\begin{equation}\label{eq:holding-bound-final}
\int_{\{q>0\}}h(x)^2\frac{r(x)}{q(x)}\,\pi(dx)
= \lim_{m\to\infty}G_m < \infty, 
\end{equation}
which proves~\eqref{eq:holding-energy-bound}. 

Consider states with  $q(x)=0$. For every $\varepsilon>0$, on the event $\{q(X_0)=0,\ |h(X_0)|>\varepsilon\}$, 
we have $|S_n|/\sqrt{n} > \varepsilon\sqrt n$.  
Since $(S_n/\sqrt n)_{n \geq 1}$ is bounded in probability, the probability of this event must be zero.
Letting $\varepsilon$ range over the positive rationals shows that $h(x) = 0$ for $\pi$-almost every $x$ with $r(x) = 1$. Now using the elementary inequality
\begin{align}\label{eq:nrn-domination}
nr^n \leq \sum_{k=1}^{n} r^k \leq\frac{r}{1-r}, \qquad 0 \leq r < 1,
\end{align}
we get that  
\begin{equation}\label{eq:roberts-dominating-function}
0\leq nh(X_0)^2r(X_0)^n
\leq h(X_0)^2\frac{r(X_0)}{1-r(X_0)}
=  h(X_0)^2\frac{r(X_0)}{1-r(X_0)}\one_{\{r(X_0)<1\}}. 
\end{equation}
The right-hand side is integrable due to~\eqref{eq:holding-energy-bound}. Since $ \lim_{n \rightarrow \infty } nr^n = 0$ for  each $r \in[0, 1)$, dominated convergence proves~\eqref{eq:roberts-limit-zero}. 

Finally, when the CLT assumption holds, for every $t\in\R$, Lemma~\ref{lem:random-index} yields 
\begin{align}\label{eq:limiting-characteristic-function}
\lim_{p\downarrow0} \phi_p(t) =\frac{1}{1+\sigma^2t^2/2}. 
\end{align} 
So the inequality~\eqref{eq:chf-comparison} can be replaced by 
\begin{equation}\label{eq:compare-characteristic-bounds}
\frac{1}{1+\sigma^2t^2/2}
\leq
\frac{1}{1+ t^2 G_m / 2},
\end{equation}
which yields $G_m \leq \sigma^2$. Monotone convergence then proves~\eqref{eq:holding-energy-bound-CLT}. 
\end{proof}

\begin{remark}\label{rmk:proof-P3-1}
The proof of Theorem~\ref{thm:holding-noise-bound} is closely related to the jump-chain representation, in which successive distinct states are separated by conditionally independent geometric holding times. 
This representation is standard for Metropolis--Hastings chains \citep{DoucRobert2011} and has been recorded for general Markov kernels by  \citet[Proposition~24]{ViholaHelskeFranks2020}.  
Moreover, an exact counterpart of~\eqref{eq:holding-energy-bound} is present in the earlier theory for reversible chains with $h \in L^2(\pi)$; see~\citet[Proposition~2]{DoucetEtAl2015} and \citet[Theorem~1]{deligiannidis:etal:2018}. 
Our contribution is to recover the sharp inequality directly from the CLT assumption, without assuming $h\in L^2(\pi)$, convergence of $A_n$, or reversibility. 
\end{remark}

\begin{corollary}[Solution to HR Problem 5] \label{thm:HR5} 
Let $P$ be reversible and ergodic and $(X_n)_{n \ge 0}$ be stationary. 
Let $h\in L^1_0(\pi)$.   
If $(S_n / \sqrt{n})_{n \geq 1}$  is bounded in probability and $\inf_{x \in \X} P(x, \{x\}) \geq \delta > 0$, then $A  < \infty.$
\end{corollary}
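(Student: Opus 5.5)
The plan is to combine Theorem~\ref{thm:holding-noise-bound} with Theorem~\ref{thm:P2}. First, tightness of $(S_n/\sqrt{n})_{n\geq1}$ lets us apply Theorem~\ref{thm:holding-noise-bound}, which gives $h=0$ $\pi$-a.s.\ on $\{q=0\}$ together with
\begin{equation*}
\int_{\{q>0\}} h(x)^2 \frac{r(x)}{q(x)}\,\pi(dx) < \infty .
\end{equation*}
Since $r(x)\geq\delta$ everywhere and $q(x)\leq 1$, on $\{q>0\}$ we have $r(x)/q(x)\geq \delta$, so $\delta\,\pi(h^2\one_{\{q>0\}})<\infty$. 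On $\{q=0\}$ the function $h$ vanishes, hence $\pi(h^2)\leq \delta^{-1}\int_{\{q>0\}} h^2 r/q\,d\pi<\infty$. Thus $h\in L^2(\pi)$. This is the one place where the uniform holding-probability assumption enters.

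Next, with $h\in L^2(\pi)$, $P$ reversible and ergodic, Theorem~\ref{thm:prelim}(ii) (Theorem 4 of H\"aggstr\"om and Rosenthal) shows that $A$ exists as an extended-real limit, with $A=B=C\in[0,\infty]$. Theorem~\ref{thm:P2}, using only tightness and reversibility, gives a $\sqrt n$-CLT with $\sigma^2<\infty$ and $\sup_n A_{2n}\leq 2\sigma^2$. Since the full sequence $A_n$ converges, its limit equals $\lim_n A_{2n}=\sigma^2<\infty$. So $A<\infty$. Alternatively, one can invoke Corollary~\ref{thm:HR6} directly once the CLT is in hand.

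There is no real obstacle, because all the heavy lifting is in the earlier theorems. The only points to check are the handling of the set $\{q=0\}$, which is dealt with by the vanishing of $h$ there, and the observation that the lower bound $r\geq\delta$ converts the weighted integral into plain square integrability.
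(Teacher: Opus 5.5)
Your proof is correct and follows the paper's route: you extract $h\in L^2(\pi)$ from Theorem~\ref{thm:holding-noise-bound} using $r\geq\delta$, then conclude via Theorem~\ref{thm:P2} together with Theorem~4 of H\"aggstr\"om and Rosenthal, which is the content of Corollary~\ref{thm:HR6}. The only difference is cosmetic. The paper passes through the consequence $n\E_\pi[h(X_0)^2r(X_0)^n]\to0$ and the bound $r^n\geq\delta^n$, whereas you use the integral bound~\eqref{eq:holding-energy-bound} directly with $r/q\geq\delta$, which is essentially the argument of Remark~\ref{rmk:P5}.
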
 
\begin{proof}
By Theorem~\ref{thm:holding-noise-bound}, the tightness of $(S_n/ \sqrt{n})_{n \geq 1}$ implies that Roberts' condition fails and actually, $n\E_\pi\left[h(X_0)^2r(X_0)^n\right] \rightarrow 0$. 
Hence, for sufficiently large $n$, we have 
\begin{equation}  
\infty >  n \E_\pi\left[h(X_0)^2r(X_0)^n\right] 
\geq n  \delta^n \E_\pi\left[h(X_0)^2 \right]. 
\end{equation} 
Since $\delta > 0$, this means $\pi(h^2) < \infty$. 
By Theorem~\ref{thm:P2}, a $\sqrt{n}$-CLT also holds, and then the conclusion follows from Corollary~\ref{thm:HR6}. 
\end{proof}

\begin{remark}\label{rmk:P5}
    By Theorem~\ref{thm:P2},  $S_n/\sqrt{n} \Rightarrow \Normal(0, \sigma^2)$ for some $0 \leq \sigma^2 < \infty$, and then one can use Theorem~\ref{thm:holding-noise-bound} to obtain a sharper bound on $\pi(h^2)$. 
    Since $r(x)/(1 - r(x)) \geq \delta / (1 - \delta)$ on $\{r < 1\}$, and $h = 0$ on $\{r = 1\}$, we obtain that 
    \begin{equation}
        \pi(h^2) \leq \frac{1 - \delta}{\delta} \sigma^2 < \infty. 
    \end{equation}
\end{remark}

\subsection{Proof of Proposition~\ref{prop:holding-contraction}} \label{sec:prop-holding-contraction} 

\begin{proof}[Proof of Proposition~\ref{prop:holding-contraction}]

For the  chain $(X_n)_{n \geq 0}$, the successive distinct states form a jump chain $(Y_k)_{k \geq 0}$.
Explicitly, on $\{q>0\}$, define the jump kernel
\begin{equation}\label{eq:jump-kernel-definition}
Q(x,\B)=\frac{P(x,\B)-r(x)\one_{\B}(x)}{q(x)}.
\end{equation}
Extend $Q$ arbitrarily to $\{q=0\}$.
Then
\begin{equation}\label{eq:holding-jump-decomposition}
P(x,dy)=r(x)\delta_x(dy)+q(x)Q(x,dy),
\qquad
Q(x,\{x\})=0 \text{ whenever }q(x)>0.
\end{equation} 
Then we realize $N_p$ by independently killing the path after each recorded observation with probability $p$.  
Hence, if the current state is $x$, the three possible outcomes have probabilities
\begin{equation}\label{eq:three-run-outcomes}
(1-p)r(x),
\qquad
(1-p)q(x),
\qquad
p,
\end{equation}
corresponding respectively to a surviving self-transition, a surviving departure from $x$, and killing.  
The observed holding period terminates at each step with probability  
\begin{equation}\label{eq:dp-definition}
d_p(x)=1-(1-p)r(x)=q(x)+pr(x),
\end{equation} 
and its length is geometric with parameter \(d_p(x)\).  
Moreover, by the standard competing-risks property of geometric random variables, the observed holding time is independent of the terminal type, conditional on the current state.  Conditional on departure, the next distinct state has law \(Q(x,\cdot)\), again independently of the holding time. 

Let $Y_0,\ldots,Y_{K_p}$ be the $K_p + 1$ successive distinct states observed before killing, and let $L_j$ be the observed holding time at $Y_j$. The final holding period is the one terminated by killing. Hence,
\begin{equation}\label{eq:sum-by-runs}
S_{N_p}=\sum_{j=0}^{K_p}L_jh(Y_j).
\end{equation}
Let $\RunSigma_p=\sigma(K_p,Y_0,\ldots,Y_{K_p})$ denote the observed jump chain. Our argument above  shows that  $L_0,\ldots,L_{K_p}$ are conditionally independent given $\RunSigma_p$, and  
\begin{equation}\label{eq:conditional-geometric-run-law}
L_j\mid\RunSigma_p\sim\Geom(d_p(Y_j)),
\qquad 0\leq j\leq K_p.
\end{equation} 
In particular, the final terminated period has the same conditional law as every preceding holding period, which is the primary motivation behind this geometric killing mechanism.

Define  
\begin{equation} 
    \Psi_p(t) = \E\left[e^{it\sqrt p S_{N_p}}\mid\RunSigma_p\right]. 
\end{equation} 
Then, $\phi_p(t) = \E_\pi[ \Psi_p(t)]$. To find an upper bound on $\Psi_p(t)$, we use~\eqref{eq:sum-by-runs} and conditional independence between $L_0,\ldots,L_{K_p}$ to get 
\begin{equation}\label{eq:chf-run-decomp}
    \left|\Psi_p(t)\right|
=\prod_{j=0}^{K_p}   \left|
\E\left[e^{it\sqrt p h(Y_j) L_j }\mid\RunSigma_p\right]
\right| 
=\prod_{j=0}^{K_p}e^{-b_{p,t}(Y_j)},
\end{equation}
where we define, for $p\in(0,1)$ and $t\in\R$,  
\begin{equation}\label{eq:bpt-definition}
b_{p,t}(x)
=-\log\left|\E\left[e^{it\sqrt p h(x)L}\right]\right|,
\qquad
L\sim\Geom(d_p(x)).
\end{equation}

Pick any $\eta \in (0, t^2/2)$. 
We prove in  Lemma~\ref{lem:uniform-contraction} below that every holding period at  $x$ produces an unavoidable decay proportional to $p   a_m(x)$, yielding the following upper bound on $|\Psi_p(t)|$:
\begin{equation}
 \left| \Psi_p(t) \right| \leq
\exp\left[ -   \left(\frac{t^2}{2} - \eta\right) p R_{p,m}\right], \qquad 
R_{p,m}=\sum_{j=0}^{K_p}a_m(Y_j),
\end{equation}
for any  $0 < p < p_0(m, t, \eta)$, where $p_0(m, t, \eta) > 0$ is constant independent of $x$. 
Jensen's inequality yields $|\phi_p(t)| \leq \E | \Psi_p(t) |$, and thus  
\begin{equation}\label{eq:conditional-characteristic-contraction}
     |\phi_p(t)|  \leq  \E\left[  \exp\left( - \frac{ t^2 - 2 \eta }{2} p R_{p,m}\right) \right]. 
\end{equation} 

To convert~\eqref{eq:conditional-characteristic-contraction} into a deterministic upper bound, we need to show that $p R_{p, m}$ converges. Consider the following pathwise representation of $R_{p, m}$: 
\begin{equation}\label{eq:Rpm-pathwise} 
R_{p,m} = a_m(X_0) + \sum_{k=1}^{N_p-1} W_{k, m}, \qquad W_{k,m}=a_m(X_k)\one_{\{X_k\neq X_{k-1}\}}.  
\end{equation} 
A routine calculation using stationarity gives $\E_\pi\left[ W_{k,m} \right] = G_m$ where $G_m$ is given by~\eqref{eq:def-Gm}.  
Since $a_m \leq m^4$,  the stationary sequence  $ (W_{k,m}  )_{k \geq 1}$ is bounded and ergodic. Birkhoff's ergodic theorem therefore gives
\begin{equation}\label{eq:run-start-ergodic-theorem}
\frac1n\sum_{k=1}^nW_{k,m}\longrightarrow G_m
\qquad\text{almost surely}.
\end{equation}
Because $N_p\to\infty$ in probability, the ergodic average in equation~\eqref{eq:run-start-ergodic-theorem}, evaluated at the independent random index $N_p-1$, converges to $G_m$ in probability.
The initial term in equation~\eqref{eq:Rpm-pathwise} is bounded and therefore vanishes after multiplication by $p$. 
As $p \downarrow 0$, the rescaled horizon satisfies  $pN_p\Rightarrow T$ with $T\sim\Exp(1).$  So Slutsky's theorem gives
\begin{equation}\label{eq:pRpm-limit}
pR_{p,m} = p a_m(X_0) + \frac{p (N_p - 1)}{N_p-1} \sum_{k=1}^{N_p - 1} W_{k, m} \Longrightarrow TG_m.
\end{equation}
Since  $u \mapsto e^{-cu}$ is bounded and continuous on $[0,\infty)$ for $c>0$, it follows from~\eqref{eq:conditional-characteristic-contraction}  that 
\begin{align}\label{eq:characteristic-limsup-bound}
\limsup_{p\downarrow0} |\phi_p(t)|  \leq
 \E\left[  \exp\left( - \frac{  t^2 - 2 \eta}{2} G_m T \right) \right] = \frac{1}{1+ (t^2/2 - \eta) G_m }, 
\end{align}
where the last step follows from the Laplace transform for $T \sim \Exp(1)$. 
Letting $\eta \downarrow 0$ proves the proposition. 
\end{proof}

\begin{lemma} \label{lem:uniform-contraction}
Fix $m\geq1$, $t\neq0$, and $\eta\in(0,t^2/2)$.
There exists $p_0=p_0(m,t,\eta)>0$ such that, for every $x\in\X$ and $0<p<p_0$,
\begin{equation}\label{eq:uniform-contraction-bound}
b_{p,t}(x) \geq
p\left(\frac{t^2}{2}-\eta\right)a_m(x).
\end{equation}
%Choosing $\eta = t^2 / 4$ yields that, for $0 < p < p_0(m, t)$, 
%\begin{equation}
%     b_{p,t}(x) \geq   \frac{p t^2}{4} a_m(x).
%\end{equation}
\end{lemma}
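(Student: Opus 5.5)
The plan is to compute the characteristic function of a geometric random variable explicitly and expand its modulus for small $p$. Write $d=d_p(x)=q(x)+pr(x)$, $s=1-d=(1-p)r(x)$, and $u=t\sqrt p\,h(x)$. For $L\sim\Geom(d)$ on $\{1,2,\dots\}$,
\begin{equation}
\E[e^{iuL}]=\frac{d e^{iu}}{1-se^{iu}},\qquad
\left|\E[e^{iuL}]\right|^{-2}=\frac{|1-se^{iu}|^2}{d^2}=\frac{(1-s)^2+2s(1-\cos u)}{d^2}=1+\frac{2s(1-\cos u)}{d^2}.
\end{equation}
Hence
\begin{equation}
b_{p,t}(x)=\tfrac12\log\Bigl(1+\frac{2s(1-\cos u)}{d^2}\Bigr).
\end{equation}
This exact formula reduces the lemma to elementary inequalities. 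If $x\notin\Lambda_m$, then $a_m(x)=0$ and the bound is trivial, since $b_{p,t}\ge0$. So we only consider $x\in\Lambda_m$, where $q(x)\ge m^{-1}$ and $|h(x)|\le m$.

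On $\Lambda_m$ we have uniform control: $|u|\le |t|m\sqrt p$ tends to zero uniformly in $x$. Also $d\ge q(x)\ge 1/m$ and $d\le q(x)+p$, so $d/q(x)\to1$ uniformly, with $d\le q(x)(1+mp)$. Moreover $s\ge (1-p)r(x)$. Set
\begin{equation}
v=\frac{2s(1-\cos u)}{d^2}.
\end{equation}
We lower bound it using $1-\cos u\ge \tfrac{u^2}{2}(1-u^2/12)$:
\begin{equation}
v\ \ge\ \frac{(1-p)\,r(x)\,t^2 p\,h(x)^2\,(1-u^2/12)}{q(x)^2(1+mp)^2}
= p\,t^2 a_m(x)\,\frac{(1-p)(1-u^2/12)}{(1+mp)^2}.
\end{equation}
For the upper bound, $v\le u^2/d^2\le t^2m^2p\cdot m^2$, so $v\to0$ uniformly as well. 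Then use $\tfrac12\log(1+v)\ge \tfrac{v}{2}(1-v/2)$.

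Combining these, we get $b_{p,t}(x)\ge p\,\tfrac{t^2}{2}a_m(x)\,\kappa(p)$, where $\kappa(p)\to1$ as $p\downarrow0$ and $\kappa$ depends only on $m$ and $t$, not on $x$. Choose $p_0$ so that $\tfrac{t^2}{2}\kappa(p)\ge \tfrac{t^2}{2}-\eta$ for $p<p_0$. This gives~\eqref{eq:uniform-contraction-bound}.

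The only delicate point is keeping every error factor uniform in $x$. This is exactly why the truncation $\Lambda_m$ is needed: the lower bound on $q$ controls both $d/q$ and $v$, and the bound on $|h|$ controls $u$. I also have to check the case $r(x)=0$, which is harmless since then $a_m(x)=0$.
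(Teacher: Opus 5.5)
Your proposal is correct and follows the paper's proof. You use the same closed form for $b_{p,t}(x)$ obtained from the geometric characteristic function, the same trivial bound off $\Lambda_m$, and the same uniform small-$p$ expansions of $1-\cos u$, of $d_p(x)^2$ versus $q(x)^2$, and of the logarithm on $\Lambda_m$; your explicit inequalities ($1-\cos u\ge u^2/2-u^4/24$, $d\le q(x)(1+mp)$, $\log(1+v)\ge v-v^2/2$ with $v\le t^2m^4p$) simply make the paper's uniform $o(1)$ term quantitative, giving a factor $\kappa(p)\to1$ that depends only on $m$ and $t$.
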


\begin{proof}
We first find an expression for $b_{p, t}(x)$. 
For $L\sim\Geom(d)$ on $\{1,2,\ldots\}$, the characteristic function and its squared modulus are
\begin{equation}\label{eq:geometric-characteristic-modulus} 
\E[e^{izL}]=\frac{de^{iz}}{1-(1-d)e^{iz}}, \qquad  
\left|\E[e^{izL}]\right|^2 =
\frac{d^2}{d^2+2(1-d)(1-\cos z)}.
\end{equation} 
Using equations~\eqref{eq:dp-definition} and~\eqref{eq:geometric-characteristic-modulus}, we obtain
\begin{equation}\label{eq:bpt-explicit}
b_{p,t}(x) =
\frac12\log\left[
1+\frac{2(1-p)r(x)\{1-\cos(t\sqrt p h(x))\}}
{\{q(x)+pr(x)\}^2}
\right].
\end{equation} 

Now consider the bound~\eqref{eq:uniform-contraction-bound}. It is immediate outside $\Lambda_m$, because the right-hand side vanishes and $b_{p,t}\geq0$.
On $\Lambda_m$, the quantities $|t\sqrt p h(x)|$ converge to zero uniformly as $p \downarrow 0$.  
Using $1 - \cos u \sim u^2/2$ as $u \to 0$, we get 
\begin{equation}\label{eq:uniform-cosine-expansion}
\frac{2\{1-\cos(t\sqrt p h(x))\}}{pt^2h(x)^2}
\longrightarrow 1 \text{ uniformly on } \Lambda_m. 
\end{equation}
The ratio is interpreted as $1$ when $h(x)=0$.
Since $q(x)\geq m^{-1}$ on $\Lambda_m$, we also have
\begin{equation}\label{eq:uniform-denominator-expansion}
\frac{(1-p)q(x)^2}{\{q(x)+pr(x)\}^2}
\longrightarrow1 \text{ uniformly on } \Lambda_m. 
\end{equation} 
Hence, using~\eqref{eq:bpt-explicit}, Taylor expansion of the logarithm and $a_m \leq m^4$, we get  
\begin{equation}\label{eq:uniform-bpt-expansion}
b_{p,t}(x) = \frac{pt^2}{2}a_m(x)\{1+o(1)\} \text{ uniformly on } \Lambda_m. 
\end{equation} 
The asserted lower bound follows for sufficiently small $p$.
\end{proof}

\subsection{A Counterexample under Roberts' Condition}\label{sec:P7} 

\begin{theorem}[Solution to HR Problem 7]
\label{thm:hr7-counterexample}
There exist a reversible, irreducible, aperiodic countable-state Markov chain and   $h\in L^1_0(\pi)\setminus L^2(\pi)$ such that
\begin{align} 
&\E_{\pi}\bigl[h(X_0)^2r(X_0)^n\bigr]<\infty\qquad\text{for every }n\geq 1, \label{eq:finite-Roberts-terms} \\ 
&n\E_{\pi}\bigl[h(X_0)^2r(X_0)^n\bigr]\longrightarrow\infty, \\
&A = \infty, \qquad \gamma_1=-\infty;
\end{align}
consequently, $B$ is undefined since $\gamma_0 = \pi(h^2) = \infty$. 
\end{theorem}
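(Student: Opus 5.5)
We will build the chain as a star-shaped graph around a single hub state, with two families of states attached to it. Any chain whose transition graph is a tree, apart from self-loops, is automatically reversible, so only the weights have to be chosen. One family will force $\gamma_0=\infty$ and $\gamma_1=-\infty$ while contributing nothing to Roberts' quantity. The other family will make Roberts' condition hold nontrivially while contributing only integrable amounts to the positive part of $h(X_0)h(X_1)$. Lemma~\ref{lem:inf-A-roberts} then gives $A=\infty$ for free.

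\textbf{Construction.} Take the state space $\{0\}\cup\{u_j^\pm: j\ge1\}\cup\{s_k^\pm: k\ge2\}$, and define $h$ and $\pi$ as follows.
\begin{itemize}
\item Hub: $h(0)=0$, and $0$ has a self-loop, which gives aperiodicity.
\item First family: $h(u_j^\pm)=\pm c_j$ and $\pi(u_j^\pm)=w_j$. Choose $\sum_j w_jc_j<\infty$ but $\sum_j w_jc_j^2=\infty$; for example $w_j\propto j^{-3}$ and $c_j=j$.
\item Second family: $h(s_k^\pm)=\pm b_k$ and $\pi(s_k^\pm)=\rho_k$. Choose $\rho_k b_k^2=1/(k(\log k)^2)$ with $\sum_k\rho_k b_k<\infty$, for instance by taking $b_k=1$.
\end{itemize}
The transitions are as follows.
\begin{itemize}
\item The hub moves to $u_j^+$ or $u_j^-$, and to $s_k^\pm$, with probabilities proportional to the target masses, normalized so that the hub keeps some positive holding probability.
\item From $u_j^+$ the chain jumps to $u_j^-$ with probability $1-\varepsilon$ and to the hub with probability $\varepsilon$, and symmetrically from $u_j^-$. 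These states have no self-loops, so $r=0$ on them.
\item From $s_k^\pm$ the chain holds with probability $r_k=1-1/k$ and returns to the hub otherwise.
\end{itemize}
Detailed balance along each edge fixes the hub probabilities given $\pi$, with the edge $u_j^+\leftrightarrow u_j^-$ balanced by symmetry. The total mass is finite, so the chain is irreducible and positive recurrent. The $\pm$ symmetry gives $\pi(h)=0$, and $h\in L^1(\pi)\setminus L^2(\pi)$ because $\sum w_jc_j^2=\infty$.

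\textbf{Roberts' terms.} Only the $s_k^\pm$ have $h\neq0$ and $r>0$. Hence
\[
\E_\pi\bigl[h(X_0)^2r(X_0)^n\bigr]=2\sum_k \rho_kb_k^2(1-1/k)^n\le 2\sum_k \frac{1}{k(\log k)^2}<\infty
\]
for every $n$, which is~\eqref{eq:finite-Roberts-terms}. For the divergence, restrict the sum to $k\ge n$, where $(1-1/k)^n\ge e^{-2}$ say, to get
\[
n\,\E_\pi\bigl[h(X_0)^2r(X_0)^n\bigr]\gtrsim n\sum_{k\ge n}\frac{1}{k(\log k)^2}\asymp \frac{n}{\log n}\to\infty.
\]
Lemma~\ref{lem:inf-A-roberts} then yields $A=\infty$.

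\textbf{Sign of $\gamma_1$.} The positive part of $h(X_0)h(X_1)$ comes only from self-loops at $s_k^\pm$, since hub moves contribute $0$ and $u_j^+\to u_j^-$ contributes negatively. Its expectation is at most $2\sum_k\rho_kr_kb_k^2<\infty$. The negative part is at least $2\sum_j w_j(1-\varepsilon)c_j^2=\infty$. Hence $\gamma_1$ is well defined and equals $-\infty$, while $\gamma_0=\pi(h^2)=\infty$, so $B_1$ involves $\infty-\infty$.

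\textbf{Main obstacle.} The delicate point is choosing the sequences so that the two mechanisms do not interfere. The $u$-family must have $r=0$ exactly, so it cannot spoil the finiteness in~\eqref{eq:finite-Roberts-terms}. The $s$-family must have $\sum\rho_kb_k^2<\infty$, so its self-loops keep the positive part of $\gamma_1$ finite. At the same time, the $s$-family's tail of holding probabilities $r_k$ must approach $1$ slowly enough that $n\sum_k\rho_kb_k^2r_k^n$ diverges. The choice $\rho_kb_k^2=1/(k(\log k)^2)$ with $r_k=1-1/k$ is what I would try first. I would also double-check the hub normalization so that all rows of the transition matrix sum to $1$ with a positive self-loop at the hub.
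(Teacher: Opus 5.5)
Your construction is correct and is essentially the paper's. Both use a hub with sign-flipping pairs that have no self-loops, which produce $\gamma_0=\infty$ and $\gamma_1=-\infty$, together with sticky states whose holding probabilities tend to $1$, which make Roberts' condition hold nontrivially, and both get $A=\infty$ from Lemma~\ref{lem:inf-A-roberts}; the paper only differs in using geometric weights ($\pi\propto 4^{-k},8^{-k}$, $h=\pm 2^k$, $r=1-4^{-k}$) where you use polynomial/logarithmic ones. Two cosmetic fixes are needed: detailed balance forces $P(0,s_k^\pm)\propto\rho_k/k$ rather than $\rho_k$, and $w_j,\rho_k$ must be scaled by small constants (as literally stated, $2\sum_k 1/(k(\log k)^2)>1$) so that $\pi$ is a probability measure with $\pi(0)>1/2$, which keeps the hub row sum below $1$ with a positive self-loop.
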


Our construction involves two classes of states. 
The ``oscillating states'' are modeled in the same way as the sign-alternating construction in Example~14 of \citet{HaggstromRosenthal2007}, and they will supply the infinite second moment and the negative lag-one product. 
The other class consists of ``sticky states'' with long holding times, which will supply Roberts's condition and contribute only a finite amount to the second moment.   
If one allows $\E_{\pi}\bigl[h(X_0)^2r(X_0)^n\bigr] = \infty$ for all large $n$, then Roberts' condition holds trivially, and simpler counterexamples can be constructed.   

\begin{proof}[Proof of Theorem~\ref{thm:hr7-counterexample}]
Let the state space be 
\begin{equation}
\X =\{0\}\cup\bigl\{o_{k,\varepsilon},s_{k,\varepsilon}:k\geq 1,\ \varepsilon\in\{-1,1\}\bigr\}.
\label{eq:state-space}
\end{equation}
The letters $o$ and $s$ stand for oscillating and sticky, respectively.
Set
\begin{equation}
\kappa =\left(1+2\sum_{k=1}^{\infty}4^{-k}+2\sum_{k=1}^{\infty}8^{-k}\right)^{-1}=\frac{21}{41},
\label{eq:kappa}
\end{equation}
and define a probability measure by
\begin{equation}
\pi(0)=\kappa,\qquad
\pi(o_{k,\varepsilon})=\kappa4^{-k},\qquad
\pi(s_{k,\varepsilon})=\kappa8^{-k}.
\label{eq:pi-definition}
\end{equation}

From an oscillating state, the chain either changes sign or returns to the hub:
\begin{equation}
P(o_{k,\varepsilon},o_{k,-\varepsilon})=\frac{1}{2},\qquad
P(o_{k,\varepsilon},0)=\frac{1}{2}.
\label{eq:oscillating-transitions}
\end{equation}
From a sticky state, it either remains in place or returns to the hub:
\begin{equation}
P(s_{k,\varepsilon},s_{k,\varepsilon})=1-4^{-k},\qquad
P(s_{k,\varepsilon},0)=4^{-k}.
\label{eq:sticky-transitions}
\end{equation}
From the hub, put
\begin{align}
&P(0,o_{k,\varepsilon})=\frac{1}{2}4^{-k},\qquad
P(0,s_{k,\varepsilon})=32^{-k},
\label{eq:hub-outgoing-transitions}\\
&P(0,0)=1-\sum_{k=1}^{\infty}4^{-k}-2\sum_{k=1}^{\infty}32^{-k}=\frac{56}{93}.
\label{eq:hub-self-transition}
\end{align} 
All remaining transition probabilities are zero.
Equations \eqref{eq:oscillating-transitions}--\eqref{eq:hub-self-transition} define a Markov kernel.
A routine calculation using the detailed balance condition verifies that $P$ is $\pi$-reversible.
Every state communicates with $0$, and $0$ reaches every state with positive probability; hence the chain is irreducible.
The self-loop in \eqref{eq:hub-self-transition} makes it aperiodic.
Since the chain is irreducible with a countable state space and an invariant probability distribution, it is  positive recurrent as well. 

Define $h \colon \X \rightarrow \R$ by 
\begin{equation}
h(0) =0,\qquad
h(o_{k,\varepsilon}) =\varepsilon2^k,\qquad
h(s_{k,\varepsilon}) =\varepsilon2^k.
\label{eq:h-definition}
\end{equation}
The sign symmetry gives $\pi(h)=0$.
Moreover,
\begin{align}
\pi(\abs{h})
=2\kappa\sum_{k=1}^{\infty}\bigl(4^{-k}+8^{-k}\bigr)2^k
= \frac{8}{3} \kappa <\infty.
\label{eq:first-moment-finite}
\end{align}
On the other hand,
\begin{align}
\pi(h^2)
 =2\kappa\sum_{k=1}^{\infty}\bigl(4^{-k}+8^{-k}\bigr)4^k
=2\kappa\sum_{k=1}^{\infty}\bigl(1+2^{-k}\bigr)=\infty.
\label{eq:second-moment-infinite}
\end{align}
Therefore $h\in L^1_0(\pi)\setminus L^2(\pi)$.
The divergence in \eqref{eq:second-moment-infinite} comes entirely from the oscillating family, each level of which contributes the same positive amount.

The holding probabilities satisfy $r(o_{k,\varepsilon})=0,$ and $r(s_{k,\varepsilon})=1-4^{-k}.$ 
The value of $r(0)$ is irrelevant because $h(0)=0$.
For $n\geq 1$,  
\begin{equation}
\E_{\pi}\bigl[h(X_0)^2r(X_0)^n\bigr]=2\kappa\sum_{k=1}^{\infty}8^{-k} 4^k \bigl(1-4^{-k}\bigr)^n \leq 2\kappa\sum_{k=1}^{\infty}2^{-k} = 2 \kappa <\infty.  
\label{eq:Rn-exact}
\end{equation} 
Let $K_n:=\left\lceil\log_4(2n)\right\rceil$.  For $k\geq K_n$, one has $n4^{-k}\leq 1/2$, and Bernoulli's inequality gives
\begin{equation}
\bigl(1-4^{-k}\bigr)^n\geq 1-n4^{-k}\geq\frac{1}{2}.
\label{eq:Bernoulli-lower-bound}
\end{equation}
Consequently,
\begin{align}
\E_{\pi}\bigl[h(X_0)^2r(X_0)^n\bigr] 
 \geq \kappa\sum_{k=K_n}^{\infty}2^{-k}
 =\kappa2^{1-K_n} \geq \kappa 2^{- \log_4 (2n)} \geq \frac{\kappa}{\sqrt{2n}}.
\label{eq:Rn-power-lower-bound}
\end{align}
It follows that
\begin{equation}
n \E_{\pi}\bigl[h(X_0)^2r(X_0)^n\bigr] \geq \kappa\sqrt{\frac{n}{2}}\longrightarrow\infty, \label{eq:Roberts-divergence-example}
\end{equation}
which also implies $A=\infty$ by Lemma~\ref{lem:inf-A-roberts}. 
 
Consider $\gamma_1$. The self-transitions at $s_{k,\varepsilon}$ yield
\begin{align}
\E_{\pi}\left[ (h(X_0)h(X_1))^+ \right]
 =2\kappa\sum_{k=1}^{\infty}8^{-k}4^k\bigl(1-4^{-k}\bigr)
 =2\kappa\sum_{k=1}^{\infty}2^{-k}\bigl(1-4^{-k}\bigr)<\infty,
\label{eq:Y-positive-finite}
\end{align}
while transitions from $o_{k,\varepsilon}$  to $o_{k,-\varepsilon}$ yield 
\begin{align}
\E_{\pi}[ (h(X_0)h(X_1))^-]
 =2\kappa\sum_{k=1}^{\infty}4^{-k}4^k\frac{1}{2}
 =\kappa\sum_{k=1}^{\infty}1=\infty.
\label{eq:Y-negative-infinite}
\end{align}
So $\gamma_1=-\infty$, while $\gamma_0=\pi(h^2)=+\infty$.  Consequently, $B$ is not  well-defined.  
\end{proof}
  
\section{HR Problem 1: Nonreversible Counterexamples to $A = B$} \label{sec:P1}

\subsection{A Fourier-to-Markov Representation Theorem}

Our solution to HR Problem 1 relies on a new representation theorem that connects the Fourier transform of $2 \pi$-periodic functions to countable-space ergodic Markov chains. 
For an integrable $2\pi$-periodic function $v$, write
\begin{equation}\label{eq:fourier-conventions}
\Fcoef{v}{k}
=\frac{1}{2\pi}\int_{-\pi}^{\pi}v(\theta)e^{-ik\theta}\,d\theta,  
\end{equation}
and define its symmetric Fourier partial sums and Fej\'er means at the origin by 
\begin{equation}\label{eq:fourier-sums-means}
\Fsum{N}{v}=\sum_{|k|\leq N}\Fcoef{v}{k},
\qquad
\Fmean{N}{v} =\sum_{|k|\leq N}\left(1-\frac{|k|}{N+1}\right)\Fcoef{v}{k}.
\end{equation} 

\begin{theorem}\label{thm:representation}
Let $f$ be a strictly positive, continuous, even, $2\pi$-periodic function.
There exist a countable-state, irreducible, aperiodic, positive recurrent Markov chain with stationary distribution $\pi$ and a centered $h\in L^2(\pi)$ such that 
\begin{equation} 
\gamma_k \coloneqq \E_\pi[ h(X_0) h(X_k)] = \frac{1}{2} \Fcoef{f}{k},  \qquad k\geq0. 
\end{equation}
\end{theorem}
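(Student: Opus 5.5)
The plan is to build a regenerative chain around a hub state $0$ with $h(0)=0$. From the hub the chain launches finite deterministic excursions, each carrying a random sign, so that the autocovariance sequence becomes a positive mixture of autocorrelations of finite real sequences. The generating function of such an autocorrelation is $|A(e^{i\theta})|^2$ for a real polynomial $A$. The problem then reduces to writing $f$ as a suitably summable series of nonnegative trigonometric polynomials and factoring each one by Fejér--Riesz.

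\textbf{Step 1 (decomposition of $f$).} Let $c=\min f>0$. Since $f$ is even and continuous, Fejér means give even real trigonometric polynomials $p_n$ with $\sup_\theta|f-p_n|\le c2^{-n-3}$. Set
\[
q_0=p_0-\tfrac{c}{2},\qquad q_n=p_n-p_{n-1}+c2^{-n-1}\quad (n\ge1).
\]
Then $q_0\ge c/4$ and $q_n\ge -c2^{-n-3}-c2^{-n-2}+c2^{-n-1}>0$. Moreover $\sum_n q_n=f$ uniformly, since the constants telescope: $-c/2+\sum_{n\ge1}c2^{-n-1}=0$. Uniform convergence gives $\Fcoef{f}{k}=\sum_n\Fcoef{q_n}{k}$, and $\sum_n\Fcoef{q_n}{0}=\Fcoef{f}{0}<\infty$ because every $q_n\ge0$. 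By Fejér--Riesz, each $q_n$ (of degree $D_n$) equals $|A_n(e^{i\theta})|^2$ with $A_n(z)=\sum_{i=0}^{D_n}a^{(n)}_i z^i$ real (evenness makes the coefficients real). Consequently
\[
\Fcoef{q_n}{k}=\sum_i a^{(n)}_i a^{(n)}_{i+k}.
\]

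\textbf{Step 2 (the chain, presumably Proposition~\ref{prop:general-excursion}).} From the hub, with probability $w_n$ the chain enters excursion type $n$, a deterministic path of $L_n=D_n+1$ fresh states. At the start it chooses a sign $\varepsilon=\pm1$ with probability $1/2$ each, and sets $h=\varepsilon\lambda_n a^{(n)}_i$ at the $i$-th state; after the last state it returns to the hub. The hub keeps a positive self-loop probability $w_0'$, which gives aperiodicity; irreducibility is clear. Positive recurrence requires the mean cycle length $\mu=1+\sum_n w_nL_n$ to be finite. The rescaling freedom handles this: take $w_n=2^{-n}/L_n$ and $\lambda_n^2=\kappa/w_n$ for a constant $\kappa$ to be chosen.

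\textbf{Step 3 (autocovariances, presumably Proposition~\ref{lem:autocorrelation-decomposition}).} By the renewal-reward identity, $\gamma_k=\mu^{-1}\E[\sum_{i}h(X_i)h(X_{i+k})]$ over one cycle. Pairs lying in different excursions, or involving the hub, contribute zero because the signs are independent and centered and $h(0)=0$. Pairs inside a type-$n$ excursion contribute $\lambda_n^2\sum_i a^{(n)}_ia^{(n)}_{i+k}$, which the sign does not affect. Hence
\[
\gamma_k=\mu^{-1}\sum_n w_n\lambda_n^2\Fcoef{q_n}{k}=\frac{\kappa}{\mu}\Fcoef{f}{k}.
\]
Taking $k=0$ shows $\pi(h^2)=\kappa\Fcoef{f}{0}/\mu<\infty$, so $h\in L^2(\pi)$, and $\pi(h)=0$ by sign symmetry. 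Exchanging sum and expectation is justified because $\sum_n|\Fcoef{q_n}{k}|\le\sum_n\Fcoef{q_n}{0}<\infty$.

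The main obstacle is the normalization. $\mu$ depends on the $w_n$ but not on $\kappa$, so choosing $\kappa=\mu/2$ gives exactly $\gamma_k=\tfrac12\Fcoef{f}{k}$. What needs checking is that the decoupling in Step 2 is legitimate: the rescaling by $\lambda_n$ separates the summability of cycle lengths from that of the squared $h$-mass, so large degrees $D_n$ cost nothing. The remaining care is in verifying the renewal-reward formula for lagged products when pairs straddle cycle boundaries.
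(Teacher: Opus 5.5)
Your proposal is correct and is essentially the paper's proof. The ingredients match: a Fej\'er-mean decomposition of $f$ into a uniformly convergent series of strictly positive even trigonometric polynomials, Fej\'er--Riesz factorization, and the same hub-and-excursion chain with independent symmetric signs and $h$ scaled by the inverse square root of the block-selection probability. Your choice $\kappa=\mu/2$ gives exactly the paper's $h=\varepsilon a_{q,j}/\sqrt{\alpha\rho_q}$. Your telescoping-with-cushion decomposition, real spectral factor, and renewal--reward covariance computation are only cosmetic variants of the paper's residual-halving scheme, its splitting of a complex factor into real and imaginary blocks, and its direct evaluation of $\E_\pi[h(X_0)(P^kh)(X_0)]$.

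Two small repairs are needed. First, evenness alone does not make a Fej\'er--Riesz factor real; you must choose it, e.g.\ from the roots inside the unit disk (a set closed under conjugation) with a real leading constant. Second, $w_n=2^{-n}/L_n$ can sum to more than $1$ (to $2$ when $f$ is constant, since then every $L_n=1$), so you need a normalizing constant, as the paper's $c_\rho$ and factor $1/2$ provide.
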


The proof separates into a probabilistic component and an analytic component. 
Proposition~\ref{prop:general-excursion} provides the probabilistic one, where we construct an excursion-based Markov chain  for realizing a given mixture of finite autocorrelations.  See Section~\ref{sec:proof-general-excursion} for the proof.     

\begin{proposition}\label{prop:general-excursion}
For $q\geq1$, let  $a^{(q)}=(a_{q,0},\ldots,a_{q,\ell_q-1})$ be finite real vectors such that
\begin{equation}\label{eq:general-energy-condition}
\sum_{q=1}^{\infty}\sum_{j=0}^{\ell_q-1}a_{q,j}^2<\infty.
\end{equation}
There exist a countable-state, irreducible, aperiodic, positive recurrent Markov chain $P$ with stationary distribution $\pi$ and a centered $h\in L^2(\pi)$ whose autocovariances satisfy
\begin{equation}\label{eq:general-excursion-covariance}
\gamma_k
=\frac12\sum_{q=1}^{\infty}\sum_{j=0}^{\ell_q-1-k}a_{q,j}a_{q,j+k},
\qquad k\geq0,
\end{equation}
where the inner sum is zero when $k\geq\ell_q$. 
Moreover, the construction can be modified so that $P = P_1 P_2$, where each $P_i$ is reversible with respect to $\pi$. 
\end{proposition}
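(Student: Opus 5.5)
We will build a hub-and-excursion chain. A hub state $0$ has $h(0)=0$. From the hub, the chain enters excursion $q$ with a probability $w_q$ to be chosen. Excursion $q$ is a deterministic path of fresh states $(q,0),(q,1),\ldots,(q,\ell_q-1)$, after which the chain returns to the hub. To make the excursions centered, we duplicate each one with a random sign: the chain enters the path $(q,j,\varepsilon)$, $\varepsilon\in\{-1,1\}$, with probability $w_q/2$ each, and we set $h(q,j,\varepsilon)=\varepsilon\, c_q a_{q,j}$. The hub keeps a self-loop with positive probability, which gives aperiodicity. Irreducibility is clear. Positive recurrence requires $\sum_q w_q\ell_q<\infty$, and we will arrange this. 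The stationary distribution is then explicit: $\pi(q,j,\varepsilon)=\pi(0)w_q/2$ for every $j$, and $\pi(0)$ is fixed by normalization, $\pi(0)(1+\sum_q w_q\ell_q)=1$.

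Next we compute the autocovariances. For $k\ge1$, a pair $(X_0,X_k)$ with both entries nonzero under $h$ must lie in the same excursion. The reason is that returning to the hub, sitting there possibly several steps, and starting a new excursion gives an independent sign $\varepsilon'$, so such cross terms average to zero. Moreover, $\E[\varepsilon\varepsilon']=0$ even conditionally on the path structure, so the cross terms contribute exactly nothing. This gives
\[
\gamma_k=\sum_q \pi(0)w_q c_q^2\sum_{j=0}^{\ell_q-1-k}a_{q,j}a_{q,j+k}.
\]
We then choose $c_q^2=1/(2\pi(0)w_q)$. The difficulty is that $\pi(0)$ depends on the $w_q$. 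To handle this, first pick $w_q>0$ with $\sum_q w_q\le 1/2$ and $\sum_q w_q\ell_q<\infty$, for example $w_q=2^{-q-1}/\ell_q$. This determines $\pi(0)$, and we then define $c_q$ from it. With these choices, \eqref{eq:general-excursion-covariance} holds. Finally, $\pi(h^2)=\gamma_0=\frac12\sum_{q,j}a_{q,j}^2<\infty$ by \eqref{eq:general-energy-condition}, and $\pi(h)=0$ by sign symmetry. Since $h\in L^2$, all interchanges of sums are justified by absolute convergence; this is dominated by the same energy sum through Cauchy--Schwarz.

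For the factorization $P=P_1P_2$, we use the standard trick for deterministic cycles. The transition from $(q,j,\varepsilon)$ to $(q,j+1,\varepsilon)$ is a composition of two involutions, and each involution, as a permutation matrix preserving a uniform measure on the cycle, is reversible. A cleaner approach is to lengthen each excursion into a cycle that passes through the hub, and to write the cycle shift as the product of two reflections. The hub's random choice is then absorbed into one of the factors: $P_1$ at the hub resamples which cycle to enter, using a kernel $\pi$-reversible on the set of "entry" states, and it acts as a reflection elsewhere, while $P_2$ is the complementary reflection. Each factor must preserve $\pi$, which is constant along each cycle except at the hub.

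The main obstacle is this last step. Reflections fix endpoints, so the hub and entry states need a symmetric resampling step that is reversible with respect to $\pi$ and still yields the correct forward motion. A possible fallback is to double each excursion state into an "in" copy and an "out" copy with equal mass. $P_1$ then pairs $(q,j,\mathrm{out})\leftrightarrow(q,j+1,\mathrm{in})$, and $P_2$ pairs $(q,j,\mathrm{in})\leftrightarrow(q,j,\mathrm{out})$, with both pairings deterministic swaps. At the hub layer, $P_2$ is a reversible randomization across excursion starts. We would then check that $h$, placed only on the "in" copies with a suitable rescaling, yields the same covariance formula with $k$ counted in $P$-steps.
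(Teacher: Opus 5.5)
Your hub-and-excursion construction for the covariance identity is correct and is essentially the paper's basic construction. The paper takes $w_q=\rho_q/2$ with $\rho_q\propto 2^{-q}/\ell_q$, hub self-loop $1/2$, and $h=\varepsilon a_{q,j}/\sqrt{\alpha\rho_q}$, which is your normalization $c_q^2=1/(2\pi(0)w_q)$.

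The gap is the ``moreover'' clause. You sketch two factorizations but complete neither. By your own account, the one real difficulty is left open: how to split the random choice of block at the hub between two $\pi$-reversible factors while the product still moves deterministically forward along each block. The in/out fallback does not settle it either:
\begin{itemize}
\item Whatever order you compose in, the two matchings make the in-copies and the out-copies travel in opposite directions. Each block therefore splits into an $h$-carrying orbit and a null orbit.
\item $P_2$ is already a perfect matching on the block states, so it cannot also be ``a reversible randomization across excursion starts.'' The only free ends are the two $P_1$-unmatched states $(q,0,\mathrm{in})$ and $(q,\ell_q-1,\mathrm{out})$.
\item How these ends are attached to the hub reversibly is precisely the obstacle, and it is not specified. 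Nor are the resulting stationary law and covariances checked.
\end{itemize}

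The missing idea is where to put the resampling. The paper gives each signed block its own zero-valued anchor. Block $(q,\varepsilon)$ becomes a cycle indexed by $\Z_{\ell_q+1}$, with the anchor at position $0$ and the values $\varepsilon a_{q,j-1}/\sqrt{\alpha\rho_q}$ at positions $j=1,\dots,\ell_q$. A null state $\star$ is added for aperiodicity. The two factors are:
\begin{itemize}
\item $P_1$ resamples from $\nu$, with $\nu(\star)=1/2$ and $\nu(q,\varepsilon,0)=\rho_q/4$, at every anchor, and applies $R(j)=-j$ elsewhere.
\item $P_2$ applies $K(j)=1-j$ on every cycle and fixes $\star$.
\end{itemize}
The resampling is an independence kernel on the anchor set, reversible because $\pi^\circ=\alpha\nu$ there. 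The reflections are involutions between states of equal mass. Since $K(R(j))=j+1$, the product $P_1P_2$ sends an anchor to position $1$ of a freshly sampled block (or to $\star$), then walks the block forward back to its anchor.

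Your worry that reflections fix endpoints is not actually an obstruction, even with a single hub, provided the randomization goes into the factor that moves the hub:
\begin{itemize}
\item Let $P_1$ fix $0$ and reverse each path, $(q,j,\varepsilon)\mapsto(q,\ell_q-1-j,\varepsilon)$.
\item Let $P_2$ be the star kernel $P_2(0,(q,0,\varepsilon))=w_q/2$, $P_2(0,0)=1-\sum_q w_q$, $P_2((q,0,\varepsilon),0)=1$, combined with $(q,j,\varepsilon)\mapsto(q,\ell_q-j,\varepsilon)$ for $1\le j\le\ell_q-1$.
\end{itemize}
Detailed balance for the star part is $\pi(0)w_q/2=\pi(q,0,\varepsilon)$, which your $\pi$ satisfies. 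One checks directly that $P_1P_2$ is exactly your basic chain.
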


Proposition~\ref{prop:general-excursion} reduces the probabilistic construction to finding finite vectors $(a^{(q)})_{q \geq 1}$. 
To connect this problem with Fourier analysis, let
$a=(a_0,\ldots,a_{\ell-1})\in\R^{\ell}$ and define
\begin{equation}\label{eq:block-polynomial}
\varphi_a(z)=\sum_{j=0}^{\ell-1}a_jz^j,
\qquad
T_a(\theta)=\left|\varphi_a(e^{i\theta})\right|^2.
\end{equation}
A direct expansion gives
\begin{equation}\label{eq:square-modulus-autocorrelation}
\widehat{T_a}(k) =\sum_{j=0}^{\ell-1-k}a_ja_{j+k}, 
\qquad k\geq0,
\end{equation}
where the sum is zero when $k\geq\ell$. Thus, the autocorrelation sequence of a finite vector is the Fourier coefficient sequence of the squared modulus of a polynomial. Since Proposition~\ref{prop:general-excursion} involves a sum of such autocorrelation blocks, we seek a corresponding decomposition into a sum of squared polynomial moduli. Proposition~\ref{lem:autocorrelation-decomposition} provides this analytic result.  
See Section~\ref{sec:proof-autocor-decomp} for the proof. 

\begin{proposition}\label{lem:autocorrelation-decomposition}
Let $f$ be a strictly positive, continuous, even, $2\pi$-periodic function. Then there exist finite real vectors  $a^{(q)}=(a_{q,0},\ldots,a_{q,\ell_q-1})$ for $q \geq 1$ such that, uniformly in $\theta$,
\begin{equation}\label{eq:block-pointwise-decomposition}
f(\theta)
=\sum_{q=1}^{\infty}
\left|\sum_{j=0}^{\ell_q-1}a_{q,j}e^{ij\theta}\right|^2.
\end{equation}
In particular, for every $k\geq0$, $\Fcoef{f}{k} = 2 \gamma_k$ where $\gamma_k$ is given by~\eqref{eq:general-excursion-covariance}.
Moreover, $\Fcoef{f}{0}<\infty.$  
\end{proposition}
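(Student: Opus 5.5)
Since $f$ is strictly positive and continuous on the compact circle, the plan is to build the decomposition greedily: peel off squared moduli of trigonometric polynomials, each time leaving a strictly positive remainder that is uniformly smaller. The main tool is the Fejér--Riesz theorem: a nonnegative even trigonometric polynomial $T(\theta)=\sum_{|k|\le n} c_k e^{ik\theta}$ with real $c_k=c_{-k}$ can be written as $|\varphi_a(e^{i\theta})|^2$ for a polynomial $\varphi_a$ of degree $n$. Because $T$ is even with real coefficients, the roots of $z^nT$ come in conjugate and reciprocal pairs, so $a$ can be taken real.

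\emph{Step 1.} Set $m_0=\min f>0$ and $r_0=f$. Inductively, given a continuous, even, strictly positive remainder $r_{q-1}$ with $\min r_{q-1}=\mu_{q-1}>0$, use Fejér's theorem: the Fejér means of $r_{q-1}$ converge uniformly to $r_{q-1}$. Choose $N$ with $\|\Fm_N r_{q-1}-r_{q-1}\|_\infty\le \mu_{q-1}/4$. The Fejér kernel is a positive kernel, so $\Fm_N r_{q-1}\ge \mu_{q-1}$. Define
\[
T_q=\Fm_N r_{q-1}-\tfrac{\mu_{q-1}}{2}.
\]
This is an even real trigonometric polynomial bounded below by $\mu_{q-1}/2>0$. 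By Fejér--Riesz, $T_q=|\varphi_{a^{(q)}}(e^{i\theta})|^2$ with $a^{(q)}$ real.

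\emph{Step 2.} The new remainder is $r_q=r_{q-1}-T_q$. It satisfies $\tfrac{\mu_{q-1}}{4}\le r_q\le \tfrac{3\mu_{q-1}}{4}$ pointwise, and it is continuous, even and strictly positive. Hence $\|r_q\|_\infty\le \tfrac34\mu_{q-1}\le\tfrac34\|r_{q-1}\|_\infty$, so $\|r_q\|_\infty\to0$ geometrically. The partial sums $\sum_{q\le Q}T_q=f-r_Q$ therefore converge uniformly to $f$, which is exactly the decomposition~\eqref{eq:block-pointwise-decomposition}.

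\emph{Step 3.} Uniform convergence lets us exchange the sum with the Fourier integral. Combined with~\eqref{eq:square-modulus-autocorrelation}, this gives
\[
\Fcoef{f}{k}=\sum_q \widehat{T_{a^{(q)}}}(k)=\sum_q\sum_j a_{q,j}a_{q,j+k}=2\gamma_k.
\]
For $k=0$, Parseval gives $\widehat{T_a}(0)=\sum_j a_j^2$. Hence
\[
\sum_q\sum_j a_{q,j}^2=\Fcoef{f}{0}=\frac{1}{2\pi}\int_{-\pi}^{\pi} f\,d\theta\le\|f\|_\infty<\infty.
\]
This is the energy condition~\eqref{eq:general-energy-condition} needed to apply Proposition~\ref{prop:general-excursion}.

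The main obstacle is keeping each remainder strictly positive while still forcing geometric decay. This is why I subtract a margin of $\mu_{q-1}/2$ from the Fejér mean rather than using the mean itself; a plain Fourier partial sum would not do, since it need not be nonnegative. The real-coefficient form of Fejér--Riesz is standard and only needs to be cited carefully.
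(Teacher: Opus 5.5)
Your proof is correct and follows essentially the same route as the paper: peel off Fej\'er means of the strictly positive remainder so that the remainders decay geometrically, factor each piece by Fej\'er--Riesz, and pass to Fourier coefficients using uniform convergence. The differences are cosmetic. The paper takes $T_m=\tfrac12\Fm_{n_m}r_{m-1}$ instead of subtracting a constant margin. It also applies the complex Fej\'er--Riesz factorization and splits the coefficients into real and imaginary parts, giving two real blocks per step, whereas you use the real-coefficient version for even polynomials and get one block per step.
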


\begin{proof}[Proof of Theorem~\ref{thm:representation}]
First, apply Proposition~\ref{lem:autocorrelation-decomposition} to obtain the vectors $(a^{(q)})_{q \geq 1}$. Since $\Fcoef{f}{0}<\infty$, the vectors  $(a^{(q)})_{q \geq 1}$ satisfy the assumption of Proposition~\ref{prop:general-excursion}, which proves Theorem~\ref{thm:representation}.  
\end{proof}

\begin{remark} 
Theorem~\ref{thm:representation} is similar in spirit to Herglotz's theorem, which can be used to show that the autocovariances of every stationary Gaussian process can be realized through the Fourier transform of  a finite symmetric positive measure~\citep{PeligradWu2010}.  
At an analytical level, there exist results similar to Proposition~\ref{lem:autocorrelation-decomposition} which also decompose positive continuous functions into sums of positive trigonometric polynomials; see, for example,~\citet[Theorem~2.4.2 and Exercise~2.4.3]{Rudin1969} and the explicit discussion in \citet[Section~4.1]{ChenYuan2023}. 
\end{remark}

\subsection{Proof of Proposition~\ref{prop:general-excursion}}\label{sec:proof-general-excursion}

We first provide a basic construction of $P$ with the given expressions for $(\gamma_k)_{k \geq 0}$ and then show how to modify it so that $P = P_1 P_2$ with reversible $P_1, P_2$. 

\begin{proof}[Proof of Proposition~\ref{prop:general-excursion} (Basic Construction)]
Let the state space be 
\begin{equation}\label{eq:general-state-space}
\X=\{0\}\cup\bigl\{(q,\varepsilon,j):q\geq1,\ \varepsilon\in\{-1,1\},\ 0\leq j<\ell_q\bigr\}.
\end{equation}
We let $0$ be the ``regeneration state'' of the chain. From $0$, the chain stays at $0$ with probability $1/2$; otherwise, it selects a block $q \geq 1$ and an independent symmetric sign $\varepsilon$,  deterministically traverses the states  
$(q, \varepsilon, 0), \ldots, (q, \varepsilon, \ell_q - 1)$ 
and then returns to $0$. 
See Figure~\ref{fig:regenerative-excursion-dynamics} for a graphical illustration.
Let $\rho_q$ denote the probability of selecting a block $q$ at $0$, which is given by   
\begin{equation}
\rho_q=\frac{2^{-q}}{c_\rho\ell_q}, \qquad 
    c_\rho=\sum_{q=1}^{\infty}\frac{2^{-q}}{\ell_q}. 
\end{equation} 
For later use, define 
\begin{equation}\label{eq:general-length-condition} 
\overline\ell=\sum_{q=1}^{\infty}\rho_q\ell_q, \qquad 
\alpha=\left(1+\frac{\overline\ell}{2}\right)^{-1}. 
\end{equation} 
Note that $\overline\ell < \infty$. 
The transition matrix can now be expressed by 
\begin{equation}\label{eq:general-transition-kernel}
\begin{array}{ll}
  P(0,0)=\frac{1}{2},    &  \\[3pt]
  P\bigl(0,(q,\varepsilon,0)\bigr)=\frac{\rho_q}{4}    &  \text{ for } q \geq 1, \quad \varepsilon = \pm 1,  \\[3pt]
  P\bigl((q,\varepsilon,j),(q,\varepsilon,j+1)\bigr)=1 \quad & 
  \text{ for }
  q \geq 1, \quad  \varepsilon = \pm 1, \quad 0\leq j<\ell_q-1, \\[3pt] 
  P\bigl((q,\varepsilon,\ell_q-1),0\bigr)=1 & 
  \text{ for } q \geq 1, \quad  \varepsilon = \pm 1. 
\end{array}  
\end{equation}
Every state communicates with $0$, and the self-loop at $0$ makes the chain aperiodic. The dynamics of the chain is visualized in Figure~\ref{fig:regenerative-excursion-dynamics}. 
The mean return time to $0$ is
\begin{equation}\label{eq:general-mean-return-time}
\frac12+\sum_{q=1}^{\infty}\sum_{\varepsilon\in\{-1,1\}}
\frac{\rho_q}{4}(\ell_q+1)
=1+\frac{\overline\ell}{2}<\infty.
\end{equation}
Hence the chain is positive recurrent. 
A routine calculation using renewal theory yields the stationary distribution: 
\begin{equation}\label{eq:general-stationary-distribution}
\pi(0)=\alpha,
\qquad
\pi(q,\varepsilon,j)=\frac{\alpha\rho_q}{4}.
\end{equation} 

Define the function $h$ by 
\begin{equation}\label{eq:general-observable}
h(0)=0,
\qquad
h(q,\varepsilon,j)
=\varepsilon\frac{a_{q,j}}{\sqrt{\alpha\rho_q}}.
\end{equation} 
The symmetric sign $\varepsilon$ gives $\pi(h)=0$, while
\begin{align}\label{eq:general-L2}
\pi(h^2)
 =\sum_{q=1}^{\infty}\sum_{\varepsilon\in\{-1,1\}}
\sum_{j=0}^{\ell_q-1}
\frac{\alpha\rho_q}{4}\frac{a_{q,j}^2}{\alpha\rho_q}  
 =\frac12\sum_{q=1}^{\infty}\sum_{j=0}^{\ell_q-1}a_{q,j}^2<\infty.
\end{align}  
Starting from $0$, every future nonzero excursion carries a fresh symmetric sign, and therefore 
\begin{equation}\label{eq:general-zero-start-mean}
\E_0[h(X_t)]=0,
\qquad t\geq0.
\end{equation}
If the chain starts at $(q,\varepsilon,j)$, its motion is deterministic until the current excursion ends; after the return to $0$, the conditional mean is zero by~\eqref{eq:general-zero-start-mean}. Thus
\begin{equation}\label{eq:general-Pkh}
(P^kh)(q,\varepsilon,j)
=\one_{\{j+k<\ell_q\}}\varepsilon
\frac{a_{q,j+k}}{\sqrt{\alpha\rho_q}}.
\end{equation}
Using stationarity and~\eqref{eq:general-stationary-distribution} and~\eqref{eq:general-Pkh}, we obtain
\begin{align}\label{eq:general-covariance-calculation}
\gamma_k
&= \E_\pi [ h(X_0) (P^k h)(X_0) ] \\
&=\sum_{q=1}^{\infty}\sum_{\varepsilon\in\{-1,1\}}
\sum_{j=0}^{\ell_q-1-k}
\frac{\alpha\rho_q}{4}
\left(\varepsilon\frac{a_{q,j}}{\sqrt{\alpha\rho_q}}\right)
\left(\varepsilon\frac{a_{q,j+k}}{\sqrt{\alpha\rho_q}}\right)\\
&=\frac12\sum_{q=1}^{\infty}\sum_{j=0}^{\ell_q-1-k}a_{q,j}a_{q,j+k},
\end{align}
which completes the proof. 
\end{proof}
 
\begin{remark} 
    The finite excursion architecture used in our proof is a  classical regenerative construction~\citep{Smith1955}. It was shown in~\citet{Glynn1997}  that covariance functions and spectral quantities of stationary regenerative processes can be represented in terms of quantities defined over individual excursions. 
    A closely related construction already appears in \citet[Example~11 and Remark~13]{HaggstromRosenthal2007}, where each visit to \(0\) initiates a deterministic finite excursion.  Our construction allows an arbitrary finite sequence $a^{(q)}$ on each excursion and introduces an independent symmetric sign so that correlations across different excursions vanish.  
\end{remark} 

\begin{figure}[htbp]
\centering
\begin{tikzpicture}[x=1cm,y=1cm]
  \node[mchub] (hub) at (0.65,0) {$0$};
  \node[mcvalue,below=2mm of hub] {$h=0$};
  \draw[mcedge] (hub) to[loop left,min distance=12mm]
    node[mcprob,left] {$P:\tfrac12$} (hub);

  \node[font=\small\bfseries,anchor=east] at (2.25,1.8)
    {$\varepsilon=+1$};
  \node[mcstate] (plus-zero) at (3.65,1.8) {$(q,+1,0)$};
  \node[mcstate] (plus-one) at (6.35,1.8) {$(q,+1,1)$};
  \node[font=\large] (plus-dots) at (8.45,1.8) {$\cdots$};
  \node[mcstate] (plus-last) at (11.15,1.8)
    {$(q,+1,\ell_q-1)$};
  \node[mcvalue,below=1.5mm of plus-zero]
    {$h=c_q a_{q,0}$};
  \node[mcvalue,below=1.5mm of plus-one]
    {$h=c_q a_{q,1}$};
  \node[mcvalue,below=1.5mm of plus-last]
    {$h=c_q a_{q,\ell_q-1}$};

  \node[font=\small\bfseries,anchor=east] at (2.25,-1.8)
    {$\varepsilon=-1$};
  \node[mcstate] (minus-zero) at (3.65,-1.8) {$(q,-1,0)$};
  \node[mcstate] (minus-one) at (6.35,-1.8) {$(q,-1,1)$};
  \node[font=\large] (minus-dots) at (8.45,-1.8) {$\cdots$};
  \node[mcstate] (minus-last) at (11.15,-1.8)
    {$(q,-1,\ell_q-1)$};
  \node[mcvalue,above=1.5mm of minus-zero]
    {$h=-c_q a_{q,0}$};
  \node[mcvalue,above=1.5mm of minus-one]
    {$h=-c_q a_{q,1}$};
  \node[mcvalue,above=1.5mm of minus-last]
    {$h=-c_q a_{q,\ell_q-1}$};

  \draw[mcedge] (hub.north east) --
    node[mcprob,above left,pos=0.58] {$P:\rho_q/4$} (plus-zero.west);
  \draw[mcedge] (plus-zero) -- node[mcprob,above] {$P:1$} (plus-one);
  \draw[mcedge] (plus-one) -- (plus-dots);
  \draw[mcedge] (plus-dots) -- (plus-last);
  \draw[mcedge] (plus-last.north) .. controls (10.8,4.05) and (1.3,4.05) ..
    node[mcprob,above,pos=0.48] {$P:1$} (hub.north);

  \draw[mcedge] (hub.south east) --
    node[mcprob,below left,pos=0.58] {$P:\rho_q/4$} (minus-zero.west);
  \draw[mcedge] (minus-zero) -- node[mcprob,below] {$P:1$} (minus-one);
  \draw[mcedge] (minus-one) -- (minus-dots);
  \draw[mcedge] (minus-dots) -- (minus-last);
  \draw[mcedge] (minus-last.south) .. controls (10.8,-4.05) and (1.3,-4.05) ..
    node[mcprob,below,pos=0.48] {$P:1$} (hub.south);

  \node[draw=black!35,rounded corners=2pt,fill=black!2,
        font=\small,align=center,inner sep=4pt] at (13.7,0)
    {one paired branch for\\each $q\geq1$};
\end{tikzpicture}
\caption{Dynamics of the basic excursion chain.  Here \(c_q=(\alpha\rho_q)^{-1/2}\). }
\label{fig:regenerative-excursion-dynamics}
\end{figure}

We now give a factorized construction where $P = P_1 P_2$, and both $P_1, P_2$ are reversible. The key observation is that each deterministic forward step can be factorized into two reversible reflections: express $j \rightarrow j + 1$ as $j \rightarrow -j \rightarrow  1 - (- j)$, with all operations performed modulo the cycle length $L$.

\begin{proof}[Proof of Proposition~\ref{prop:general-excursion} (Factorized Construction)]

Let $\rho_q, \overline\ell, \alpha$ be  defined as in the basic construction, and set $L_q=\ell_q+1$. 
The basic construction uses one common regeneration state $0$ for all excursions. Here we replace that common hub by a separate zero-valued anchor for each signed block, and we define the new  state space by 
\begin{equation}
\X^\circ = \{\star\}\cup
    \bigl\{(q,\varepsilon,j):
    q\geq1,\ \varepsilon\in\{-1,1\},\
    j\in \Z_{L_q} \bigr\},
\end{equation}
where $\Z_{L} = \{0, 1, \dots, L - 1\}$ with addition and subtraction performed modulo $L$.  
The additional state $\star$ is regarded as a cycle of length one.
For $j\in\Z_L$, introduce the two reflections
\begin{equation}
    R_L(j)=-j\pmod L,
        \qquad
    K_L(j)=1-j\pmod L.
\end{equation}
They satisfy $K_L(R_L(j))=j+1\pmod L.$

Let $\nu$ be the probability measure on the anchors given by
\begin{equation}
\nu(\star)=\frac12,
\qquad
\nu(q,\varepsilon,0)=\frac{\rho_q}{4}.
\end{equation}
Define $P_1$ to resample from $\nu$ at every anchor and to apply the
first reflection elsewhere:
\begin{equation}
   P_1(x,\cdot) =
\begin{cases}
\nu(\cdot),&x = \star \text{ or } (q, \varepsilon, 0),\\
\delta_{(q,\varepsilon,R_{L_q}(j))}(\cdot),
   &x=(q,\varepsilon,j),\ j\neq0.
\end{cases} 
\end{equation}
Define $P_2$ by fixing $\star$ and applying the second reflection on
each nontrivial cycle: 
\begin{equation}
    P_2(\star,\star)=1,
\qquad
P_2\bigl((q,\varepsilon,j),
         (q,\varepsilon, K_{L_q}(j))\bigr)=1.
\end{equation}
Both $P_1, P_2$ are reversible with respect to the distribution $\pi^\circ$ defined by  
\begin{equation}
   \pi^\circ(\star)=\frac{\alpha}{2},
\qquad
\pi^\circ(q,\varepsilon,j)=\frac{\alpha\rho_q}{4},
\quad j\in\Z_{L_q}. 
\end{equation}
Indeed, on the anchor set one has $\pi^\circ=\alpha\nu$, so the refreshment step  of $P_1$ satisfies detailed balance. 
At non-anchor states, both $P_1$ and $P_2$ are involutions within one cycle whose positions all have equal $\pi^\circ$-mass, and thus they are also reversible. 

Let $P=P_1P_2$ (we use the convention that $P_1P_2$ first applies $P_1$ and then $P_2$).  At a non-anchor state, $P$ moves from $(q,\varepsilon,j)$ to $(q,\varepsilon,j+1\!\!\pmod{L_q})$ deterministically, while at an anchor $P$ selects a new signed block according to $\nu$ and moves to its first position. 
So $P$ has the same excursion dynamics as the basic construction, except that each signed block has its own anchor. 
We visualize its dynamics in Figure~\ref{fig:factorized-cycle-dynamics}. 
The state $\star$ supplies a self-loop, so the chain is aperiodic; irreducibility and positive recurrence follow as before.

Finally, define the observable $h^\circ$ by letting it vanish at every anchor state: 
\[
h^\circ(\star)=h^\circ(q,\varepsilon,0)=0. 
\]
For non-anchor states, as in the basic construction, define 
\[
h^\circ(q,\varepsilon,j) =
\varepsilon
\frac{a_{q,j-1}}{\sqrt{\alpha\rho_q}},
\qquad q \geq 1, \, \varepsilon \in \{-1, 1\},  1\leq j\leq\ell_q.
\]
Then the expression for $\gamma_k$ can be obtained by exactly the same calculations. In particular,  sign symmetry gives $\pi^\circ(h^\circ) = 0$, and starting from any anchor, the next non-null cycle has a fresh symmetric sign, so all covariance across different cycles vanishes.  
\end{proof}

\begin{figure}[htbp]
\centering
\begin{tikzpicture}[x=1cm,y=1cm]
  \node[font=\small\bfseries,anchor=east] at (-0.45,1.5)
    {non-anchor};
  \node[mcstate] (interior-start) at (0.65,1.5) {$(q, \varepsilon, j)$};
  \node[mcstate] (interior-middle) at (3.75,1.5)
    {$(q, \varepsilon, -j\bmod L_q)$};
  \node[mcstate] (interior-end) at (7.25,1.5)
    {$(q, \varepsilon, j+1\bmod L_q)$};
  \draw[mcfactorone] (interior-start) --
    node[mcprob,above,pos=0.35] {$P_1:1$} (interior-middle);
  \draw[mcfactortwo] (interior-middle) --
    node[mcprob,above,pos=0.65] {$P_2:1$} (interior-end);
  \draw[mcproduct] (interior-start) to[bend right=28]
    node[mcprob,below] {$P=P_1P_2:1$} (interior-end);

  \node[font=\small\bfseries,anchor=east] at (-0.45,-1.15)
    {anchor};
  \node[mchub] (anchor-start) at (0.65,-1.15) {$(q, \varepsilon, 0)$};
  \node[mchub] (anchor-middle) at (3.75,-1.15) {$(q', \varepsilon', 0)$};
  \node[mcstate] (anchor-end) at (7.25,-1.15)
    {$(q', \varepsilon', 1\bmod L_{q'})$};
  \draw[mcfactorone] (anchor-start) --
    node[mcprob,above] {$P_1: \rho_{q'}/ 4$} (anchor-middle);
  \draw[mcfactortwo] (anchor-middle) --
    node[mcprob,above] {$P_2:1$} (anchor-end);
  \draw[mcproduct] (anchor-start) to[bend right=28]
    node[mcprob,below] {$P: \rho_{q'}/4$} (anchor-end);

  \node[draw=black!35,rounded corners=2pt,fill=black!2,
        font=\small,align=center,inner sep=4pt] at (3.95,-3.25)
    {repeated product steps on a selected nonnull cycle:\\
     \(1\longrightarrow2\longrightarrow\cdots\longrightarrow
      \ell_q\longrightarrow0\) (anchor), then refresh};
\end{tikzpicture}
\caption{Dynamics of the chain in the factorized construction.}
\label{fig:factorized-cycle-dynamics}
\end{figure}

\subsection{Proof of Proposition~\ref{lem:autocorrelation-decomposition}}  \label{sec:proof-autocor-decomp}
 
\begin{proof}[Proof of Proposition~\ref{lem:autocorrelation-decomposition}] 
We first show that $f$ can be presented as 
\begin{equation}\label{eq:uniform-positive-series}
f=\sum_{m=1}^{\infty}T_m, 
\end{equation}
where the convergence holds uniformly and every $T_m$ is a nonnegative even trigonometric polynomial. 
For each $m \geq 1$,  let
\begin{equation}
    r_m = f -  \sum_{k=1}^{m} T_k
\end{equation}
denote the residual after the $m$-th approximation. Set $r_0=f$. 

We  construct $r_m$ inductively. 
Suppose  that $r_{m-1}$ is strictly positive, continuous, and even. 
By Fej\'er's theorem (see Lemma~\ref{lem:fejer-theorem}), $\| \Fm_n r_{m-1} - r_{m-1} \|_\infty \rightarrow 0$, 
$\Fm_n r_{m-1}$ is nonnegative because the Fej\'er kernel is nonnegative, and it remains  even~\citep[Chapter~I]{Katznelson2004}.  
Choose sufficiently large $n_m$  such that 
\begin{equation}\label{eq:fejer-approximation-choice}
\lVert  \Fm_{n_m} r_{m-1} -r_{m-1}\rVert_\infty 
<\frac12   \inf_{\theta\in[-\pi,\pi]}r_{m-1}(\theta). 
\end{equation}  
Instead of subtracting all of this approximation,  we subtract only half; that is, we let 
\begin{equation}\label{eq:iterative-positive-decomposition}
T_m=\frac12  \Fm_{n_m} r_{m-1},
\qquad
r_m=r_{m-1}-T_m.
\end{equation}
The approximation bound implies
\begin{equation}\label{eq:iterative-geometric-bounds}
\frac14r_{m-1}\leq T_m\leq\frac34r_{m-1},
\qquad
\frac14r_{m-1}\leq r_m\leq\frac34r_{m-1}.
\end{equation}
Hence both the removed piece and the residual stay nonnegative, the residual remains strictly positive, and the residual decreases geometrically: 
\begin{equation}\label{eq:residual-geometric-decay}
\lVert r_m\rVert_\infty
\leq\left(\frac34\right)^m\lVert f\rVert_\infty.
\end{equation}
This proves that~\eqref{eq:uniform-positive-series} holds uniformly. 
 
By the Fej\'er--Riesz theorem~\citep{GeorgiouLindquist2021} (see also Lemma~\ref{th:fejer-riesz}), for each $m$ there is a polynomial
\begin{equation}\label{eq:fejer-riesz-factorization}
\Phi_m(z)=\sum_{j=0}^{d_m}c_{m,j}z^j
\text{ such that }  T_m(\theta)=|\Phi_m(e^{i\theta})|^2. 
\end{equation} 
Write $c_{m,j}=x_{m,j}+iy_{m,j}$.
For $k\geq0$,
\begin{equation}\label{eq:complex-factor-coefficient}
\Fcoef{T_m}{k}
=\sum_{j=0}^{d_m-k}c_{m,j+k}\, \overline{c_{m,j}}.
\end{equation}
Because $T_m$ is even, this Fourier coefficient is real, and taking the real part gives
\begin{equation}\label{eq:real-imaginary-autocorrelations}
\Fcoef{T_m}{k}
=\sum_{j=0}^{d_m-k}x_{m,j}x_{m,j+k}
+\sum_{j=0}^{d_m-k}y_{m,j}y_{m,j+k}.
\end{equation}
Thus each $T_m$ contributes at most two finite real autocorrelation blocks, namely $(x_{m,j})_{j=0}^{d_m}$ and $(y_{m,j})_{j=0}^{d_m}$. 
Zero vectors may be discarded, and we index all remaining vectors by $q$.
Recalling~\eqref{eq:square-modulus-autocorrelation}, we see that the two terms on the right-hand side of~\eqref{eq:real-imaginary-autocorrelations} are also Fourier coefficients of trigonometric polynomials, which gives the pointwise identity
\begin{equation}\label{eq:Tm-real-blocks}
T_m(\theta)
=\left|\sum_{j=0}^{d_m}x_{m,j}e^{ij\theta}\right|^2
+\left|\sum_{j=0}^{d_m}y_{m,j}e^{ij\theta}\right|^2.
\end{equation}
Combining~\eqref{eq:uniform-positive-series} and~\eqref{eq:Tm-real-blocks} proves the uniform decomposition~\eqref{eq:block-pointwise-decomposition}; taking Fourier coefficients gives the expression for $\Fcoef{f}{k}$. 
Finally, $\Fcoef{f}{0} = (2\pi)^{-1} \int_{-\pi}^\pi f(\theta) d\theta < \infty$. 
\end{proof}
 
\subsection{Solution to HR Problem 1}  

\begin{theorem}[Solution to HR Problem 1] \label{thm:hr1-counterexample}
There exist a countable-state, irreducible, aperiodic, positive recurrent Markov transition kernel $P$ with stationary distribution $\pi$ and a function $h\in L^2(\pi)$ with $\pi(h)=0$ such that
\begin{equation}\label{eq:main-conclusion}
A = \lim_{n\to\infty}A_n=1,
\qquad
\liminf_{n\to\infty}B_n=1,
\qquad
\limsup_{n\to\infty}B_n=\infty.
\end{equation} 
The construction can be modified so that $P=P_1P_2$, where $P_1$ and $P_2$ are reversible Markov transition kernels. 
Moreover, $h$ satisfies a $\sqrt{n}$-CLT. 
\end{theorem}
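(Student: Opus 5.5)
We will obtain the counterexample by feeding a suitable function $f$ into Theorem~\ref{thm:representation}. Since $\gamma_k=\tfrac12\Fcoef{f}{k}$ and $f$ is even, we have
\[
B_m=\gamma_0+2\sum_{k=1}^m\gamma_k=\tfrac12\Fcoef{f}{0}+\sum_{k=1}^m\Fcoef{f}{k}=\tfrac12\Fsum{m}{f}.
\]
By the Ces\`aro identity~\eqref{eq:cesaro-identity}, this gives $A_{n}=\frac1n\sum_{m<n}B_m=\tfrac12\Fmean{n-1}{f}$. So we need a strictly positive, continuous, even, $2\pi$-periodic $f$ with $f(0)=2$, whose Fourier partial sums at $0$ satisfy $\liminf_N \Fsum{N}{f}=2$ and $\limsup_N\Fsum{N}{f}=\infty$. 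Fej\'er's theorem (Lemma~\ref{lem:fejer-theorem}) then gives $A=\tfrac12 f(0)=1$.

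The analytic step is to take a classical continuous even function whose Fourier series diverges at $0$. One example is the du Bois-Reymond/Fej\'er construction $u(\theta)=\sum_j j^{-2}\sin(N_j|\theta|)\,\cdot(\text{Fej\'er-type blocks})$. A cleaner choice is $u=\sum_j j^{-2}Q_j$, where $Q_j(\theta)=\cos(M_j\theta)\sum_{1\le|k|\le n_j}\sin(k\theta)/k$ is made even by symmetrization. Each block is uniformly bounded, but its partial sums at $0$ reach about $\log n_j$, and the frequencies $M_j$ are chosen lacunary so the blocks do not interact. Then $\limsup\Fsum{N}{u}=+\infty$.

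The obstacle is to arrange the sign of the divergence and to keep $\liminf$ equal to the function value at $0$. Two devices handle this. First, we can replace each block by its one-sided version, so that $\Fsum{N}{u}$ jumps up by about $j^{-2}\log n_j\to\infty$ only inside block $j$. Second, between blocks (for $N$ in the gaps $2M_j+2n_j<N<M_{j+1}-n_{j+1}$) we have $\Fsum{N}{u}=\sum_{i\le j}j^{-2}Q_i(0)$ exactly, and this tends to $u(0)$. Hence $\liminf_N\Fsum{N}{u}\le u(0)$. It is also at least $u(0)$, because within a block the partial sums are bounded below by the symmetric $Q_j(0)$ minus $O(j^{-2})$; we will verify this by a direct computation of the block partial sums, i.e.\ the sign-controlled sums $\sum_{k\le K}1/k\ge0$. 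Finally we set $f=u-u(0)+2+c\,(1+\cos\theta)$-type adjustments, or more simply $f=u+C$ with $C$ large enough that $f>0$, and rescale so that $f(0)=2$. Adding a constant changes only $\Fcoef{f}{0}$ and shifts all partial sums and Fej\'er means equally, so the $\liminf$ and $\limsup$ behaviour is preserved.

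Once $f$ is fixed, Theorem~\ref{thm:representation} and Proposition~\ref{prop:general-excursion} supply the chain, its $L^2$ observable $h$, and the factorized version $P=P_1P_2$. For the CLT we use part (iii) of Theorem~\ref{thm:prelim}: the sequence $A_n=\tfrac12\Fmean{n-1}{f}$ converges and is therefore bounded, so $\sup_n A_n<\infty$ and a $\sqrt n$-CLT holds. Our main effort will go into the Fourier construction, specifically the uniform bound on block partial sums that ensures $\liminf=f(0)$ rather than merely finite.
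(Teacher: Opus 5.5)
Your reduction is the same as the paper's. You feed $f$ into Theorem~\ref{thm:representation} and read off $B_n=\tfrac12\Fsum{n}{f}$ and $A_n=\tfrac12\Fmean{n-1}{f}$. You then get $A=1$ from Fej\'er's theorem and the CLT from part (iii) of Theorem~\ref{thm:prelim}. The only departure is that you build the divergent example yourself instead of citing Fej\'er's classical $g(\theta)=\sum_r r^{-2}\sin(\frac{2^{r^3}+1}{2}|\theta|)$ (Lemma~\ref{lem:fejer-example}). The block you wrote down does not work.

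\textbf{The gap.} Since $\sum_{1\le|k|\le n_j}\sin(k\theta)/k=2\sum_{k=1}^{n_j}\sin(k\theta)/k$ is odd, $Q_j=\cos(M_j\theta)\cdot(\text{that sum})$ is an odd trigonometric polynomial. All its symmetric partial sums at $0$ vanish, and symmetrizing it gives the zero function. Composing with $|\theta|$ instead would make $Q_j$ even, but it would no longer be a trigonometric polynomial. You would then lose the exact gap identity $\Fsum{N}{u}=\sum_{i\le j}i^{-2}Q_i(0)$ that your argument relies on. Your ``one-sided version'' device does not repair this, and it is not needed.

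\textbf{The fix.} Use $\sin(M_j\theta)$ in place of $\cos(M_j\theta)$. For $M_j>n_j$,
\[
Q_j(\theta)=\sin(M_j\theta)\sum_{k=1}^{n_j}\frac{\sin k\theta}{k}
=\frac12\sum_{k=1}^{n_j}\frac{\cos((M_j-k)\theta)-\cos((M_j+k)\theta)}{k}.
\]
This block has the following properties:
\begin{enumerate}
\item It is even.
\item It is bounded uniformly in $j$ and $\theta$, since the sine sums are uniformly bounded (a classical fact).
\item It satisfies $Q_j(0)=0$.
\item Its symmetric partial sums at $0$ are $\tfrac12\sum_{k=m}^{n_j}k^{-1}$ for $N=M_j-m$ and $\tfrac12\sum_{k=K+1}^{n_j}k^{-1}$ for $N=M_j+K$. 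Hence they all lie in $[0,\tfrac12\sum_{k\le n_j}k^{-1}]$, which is the sign control you wanted.
\end{enumerate}

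Take disjoint frequency ranges $[M_j-n_j,M_j+n_j]$ and $j^{-2}\log n_j\to\infty$ (e.g.\ $n_j=2^{j^3}$). Then $u=\sum_j j^{-2}Q_j$ is continuous and even, with $u(0)=0$. Moreover $\Fsum{N}{u}\ge 0$ for all $N$, $\Fsum{N}{u}=0$ in the gaps, and $\limsup_N\Fsum{N}{u}=\infty$. The function $f=2+\epsilon u$ with $\epsilon\sup_\theta|u(\theta)|<2$ then meets all your requirements.

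These are exactly the properties the paper imports for its $g$. There, $\liminf_N\Fsum{N}{g}=0$ follows from nonnegativity plus $\Fmean{N}{g}\to g(0)=0$, because the $\liminf$ of a sequence never exceeds the limit of its Ces\`aro means. So you do not actually need the gap argument for the upper bound on the $\liminf$.

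With this correction, your argument is a self-contained replacement for Lemma~\ref{lem:fejer-example}. The paper's route is shorter but rests on the cited computation for Fej\'er's example.
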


\begin{proof}[Proof of Theorem~\ref{thm:hr1-counterexample}]  
Let $f$ be a strictly positive, continuous, even, $2\pi$-periodic function. 
By Theorem~\ref{thm:representation}, we can construct a countable-space and ergodic $P$ and a function $h$ such that $\gamma_k = \Fcoef{f}{k}/2$. In particular, by Proposition~\ref{prop:general-excursion},  $P$ can be expressed as $P = P_1 P_2$ with $P_1, P_2$ reversible with respect to $\pi$. 
Since $f$ is even, 
$\Fcoef{f}{k} = \Fcoef{f}{-k}$, and by \eqref{eq:cesaro-identity},  
\begin{equation}\label{eq:BN-as-fourier-sum}
A_n=\frac12\Fmean{n-1}{f}, \qquad B_n=\frac12\Fsum{n}{f}, 
\end{equation} 
where $\Fm_n$ and $\Fs_n$ are defined by~\eqref{eq:fourier-sums-means}. 

By Fej\'er's theorem, $\Fmean{N}{f}\longrightarrow f(0)$. Meanwhile, ordinary Fourier partial sums need not converge for continuous functions.
Therefore,  it only remains to find a function $f$  such that  
\begin{equation}\label{eq:target-fourier-properties}
f(0)=2, \qquad
\liminf_{N\to\infty}\Fsum{N}{f}=2, \qquad
\limsup_{N\to\infty}\Fsum{N}{f}=\infty. 
\end{equation} 
Here we simply use a classical example due to Fej\'er: 
\begin{equation} 
g(\theta) =\sum_{r=1}^{\infty}
\frac{1}{r^2}
\sin\left(\frac{2^{r^3}+1}{2}|\theta|\right),
\qquad 
f(\theta)=2+g(\theta), \qquad
\theta \in [-\pi, \pi]. 
\end{equation}
Note that $f$ is strictly positive since 
\[
f(\theta)\geq 2-\sum_{r=1}^{\infty}\frac{1}{r^2}
=2-\frac{\pi^2}{6}>0. 
\] 
The well-known properties of the function $g$, which we review in Lemma~\ref{lem:fejer-example}, imply that $f$ satisfies~\eqref{eq:target-fourier-properties}. 
By~\eqref{eq:BN-as-fourier-sum}, $B_n$ does not converge, while $A_n \rightarrow 1$.  

Finally, since $h \in L^2(\pi)$ and $A < \infty$, by part (iii) of Theorem~\ref{thm:prelim}, $h$ satisfies a $\sqrt{n}$-CLT. 
\end{proof} 

\begin{remark}
For the factorized construction $P = P_1 P_2$ used in our proof, 
the transition kernels $P_1, P_2$ are reversible but are not conditional-expectation projections. Consequently, although Theorem~\ref{thm:hr1-counterexample} resolves HR Problem~1, it does not imply the same conclusion for two-component Gibbs samplers, which provided one of the motivations for this problem. 
\end{remark}

\section*{Acknowledgements} 
JSR was supported in part by NSERC of Canada discovery grant RGPIN-2019-04142. QZ was supported in part by NSF through grants DMS-2311307 and DMS-2245591.

\appendix

\section{Appendix: Auxiliary results}

\begin{lemma}
\label{lemma:countable_extension}
Let $P$ be a Markov kernel on an arbitrary measurable space
$(\X, \F)$ and let $h : \X\to\R$ be measurable.  There is a
countably generated sigma-algebra $\G \subseteq \F$ such that
\begin{enumerate}

\item $h$ is $\G$-measurable;

\item for every $\B \in \G$, the function $x \mapsto P^n(x,\B)$ is $\G$-measurable for any integer $n \ge 1$.

\end{enumerate}
In particular, the restricted Markov kernel $P_{\G}(x, \B): = P(x, \B)$, for $\B \in \G$, is a Markov kernel on $(\X,\G)$, and all finite-dimensional distributions of $(h(X_n))_{n \geq 0}$ for $P$ coincide with using $P_{\G}$. 
\end{lemma}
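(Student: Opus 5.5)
We build $\G$ as the smallest sigma-algebra closed under an explicitly countable generating procedure, starting from $h$ and repeatedly adding the level sets of $x \mapsto P^n(x,\B)$. Let $\mathcal{C}_0 = \{ h^{-1}((-\infty, s]) : s \in \mathbb{Q}\}$, a countable class. Given a countable class $\mathcal{C}_k \subseteq \F$, let $\mathcal{A}_k$ be the algebra generated by $\mathcal{C}_k$; it is still countable. Define
\[
\mathcal{C}_{k+1} = \mathcal{C}_k \cup \bigl\{ \{x : P^n(x,\B) \le s\} : \B \in \mathcal{A}_k,\ n \ge 1,\ s \in \mathbb{Q} \bigr\},
\]
which is countable and contained in $\F$ because each $x\mapsto P^n(x,\B)$ is $\F$-measurable. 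Put $\mathcal{C}_\infty = \bigcup_k \mathcal{C}_k$ and $\G = \sigma(\mathcal{C}_\infty)$. Then $\G$ is countably generated and $h$ is $\G$-measurable, since $\mathcal{C}_0 \subseteq \G$.

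The main step is property 2, and it needs a monotone class argument. Let $\mathcal{A}_\infty = \bigcup_k \mathcal{A}_k$. This is an algebra, since the $\mathcal{A}_k$ increase, and it generates $\G$. For $\B \in \mathcal{A}_\infty$ we have $\B \in \mathcal{A}_k$ for some $k$, so by construction every rational sublevel set of $x\mapsto P^n(x,\B)$ lies in $\mathcal{C}_{k+1} \subseteq \G$, and the function is $\G$-measurable. Fix $n$ and let $\mathcal{M}_n = \{\B \in \G : x \mapsto P^n(x,\B) \text{ is } \G\text{-measurable}\}$. This class is a monotone class. Indeed, if $\B_j \uparrow \B$ or $\B_j \downarrow \B$, then $P^n(x,\B_j) \to P^n(x,\B)$ pointwise by continuity of the measure $P^n(x,\cdot)$, and pointwise limits of $\G$-measurable functions are $\G$-measurable. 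The monotone class theorem then gives $\mathcal{M}_n \supseteq \sigma(\mathcal{A}_\infty) = \G$. Taking the intersection over all $n \ge 1$ proves property 2.

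For the final claims, property 2 with $n=1$ shows that $P_\G$ is a Markov kernel on $(\X,\G)$: each $P_\G(x,\cdot)$ is the restriction of a probability measure, and it is measurable in $x$. The finite-dimensional distributions of $(h(X_n))$ are determined by iterated integrals
\[
\int \mu(dx_0)\,\one_{\B_0}(x_0)\int P(x_0,dx_1)\one_{\B_1}(x_1)\cdots
\]
with each $\B_j = h^{-1}(\text{Borel}) \in \G$. All the integrands are $\G$-measurable, because the inner integrals are $\G$-measurable by property 2 together with the standard extension from indicators to bounded $\G$-measurable functions. Therefore each integral computed under $P$ equals the same integral under $P_\G$, with the initial law restricted to $\G$. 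An induction on the number of time points makes this precise.

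I expect the only delicate point to be the monotone class step: property 2 must be verified for all of $\G$, not only for the generators. Including every $n$ in the construction, rather than only $n=1$, is harmless, but it is not needed for $P^n$, since $P^n$-measurability follows from the $P$ case by integration.
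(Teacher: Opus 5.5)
Your proposal is correct and follows essentially the paper's route. Both build a countable algebra inductively from rational level sets of $h$ and of $x\mapsto P(x,B)$, then use a generating-class argument to extend measurability of $x\mapsto P^n(x,B)$ from that algebra to all of $\mathcal{G}$. The differences are cosmetic: you use the monotone class theorem where the paper uses the $\pi$-$\lambda$ theorem, and you put every $P^n$ into the generators where the paper gets $n\ge 2$ by induction from $n=1$.
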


\begin{proof}
    Exercise 1.17 of~\citet{Revuz1984} shows that for any countable collection of sets in $\mathcal{F}$, there exists a sub-sigma-algebra $\G$ such that $\G$ is countably generated, and for every $\B \in \G$, the function $x \mapsto P(x,\B)$ is $\G$-measurable. Hence, we can apply this result with the countable set 
    \begin{equation}
        \mathcal{C} = \{ \{ x \in \X : h(x) < q \}: q \in \mathbb{Q} \}, 
    \end{equation}
    where $\mathbb{Q}$ denotes the rationals.  For completeness, here we explicitly show how $\G$ is constructed. 

    First, let $\mathcal{A}_0$ be the algebra generated by $\mathcal{C}$, which is still countable.  
    Suppose $\mathcal{A}_r$ is a countable algebra contained in $\F$.  Since $P$ is a Markov kernel, $P(\cdot, \B)$ is $\F$-measurable for every $\B \in \mathcal{A}_r$. 
    Hence for each $\B \in \mathcal{A}_r$ and $q \in \mathbb{Q}$, the set $\{ x \in \X : P(x,\B) > q \} \in \F$. 
    Let $\mathcal A_{r+1}$ be the algebra generated by $\mathcal A_r$ and $\{ \{x \in \X : P(x,\B) > q\}:  q \in \mathbb{Q}, \B \in \mathcal{A}_r \}$.
    The algebra added at step $r$ is countable since both $\mathbb{Q}$ and $\mathcal{A}_r$ are countable.
    By induction, each $\mathcal{A}_r$ is a countable algebra, and the sequence $(\mathcal{A}_r)_{r \geq 0}$ is increasing.
    Define
    \begin{align*}
        \mathcal{A}_\infty = \bigcup_{r=0}^{\infty} \mathcal{A}_r,
    \end{align*}
    and let $\mathcal{G} =\sigma(\mathcal{A}_\infty)$.
    Then $\mathcal{G}$ is countably generated,  and $h$ is $\mathcal{G}$-measurable since $\mathcal{A}_0 \subseteq \mathcal A_\infty$.

    Define $\mathcal{D} = \{ \B \in \mathcal{G} : P(\cdot, \B) \text{ is } \mathcal{G} \text{-measurable} \}.$ 
    This is a $\lambda$-system by the properties of $P$. If $\B \in \mathcal{A}_r$, then every rational strict superlevel set $\{x: P(x,\B) >q \}$ is in $\mathcal A_{r+1} \subseteq \mathcal{G}$.  Therefore, $P(\cdot, \B)$ is $\mathcal{G}$-measurable since a real-valued function is measurable whenever its rational superlevel sets are measurable. So $\mathcal{A}_\infty \subseteq \mathcal{D}$. Since $\mathcal{A}_\infty$ is a $\pi$-system, the $\pi-\lambda$ theorem concludes $\mathcal{G} = \mathcal{D}$.
    This proves the second required condition with $n=1$. 
    An induction argument using 
    \begin{equation*}
       P^{n + 1}_{\G}(x, \B) = \int_{\X} P^{n}_{\G}(y, \B) P_{\G}(x, d y) 
    \end{equation*}
    extends this to every $n \geq 1$. 
    
    We have shown that $P_{\G}(x, \B) = P(x, \B)$. Assuming $P^n_{\G}(x, \B) = P^n(x, \B)$ and using the induction again, we find that $P^{n + 1}_{\G}(x, \B) = P^{n + 1}(x, \B)$. So the process $(h(X_n))_{n \geq 0}$ has the same finite-dimensional distributions under $P_{\G}$. 
\end{proof}

\begin{lemma}\label{lem:random-index}
Let $(U_n)_{n \geq 1}$ be real-valued random variables. 
For $0 < p < 1$, let $N_p \sim \Geom(p)$ be independent of $(U_n)_{n \geq 1}$; in particular, $\E[ N_p] = p^{-1}$. 
\begin{enumerate}[label=(\roman*)]
    \item If $(U_n)_{n \geq 1}$ is bounded in probability, then $(\sqrt{p N_p} \, U_{N_p})_{0 < p < 1}$ is also bounded in probability, and  their characteristic functions are equicontinuous at zero.  
    \item If $U_n \Rightarrow \Normal(0, \sigma^2)$ for  some $0 \leq \sigma^2 < \infty$, then  
    \begin{equation}\label{eq:random-index-joint-convergence}
        \left(pN_p, \, U_{N_p}\right) \Rightarrow (T,Z), 
    \end{equation} 
    where $Z\sim\Normal(0,\sigma^2)$, $T \sim \Exp(1)$, 
    and $T$ and $Z$ are independent.  
    Consequently,
    \begin{equation}\label{eq:geometric-horizon-clt}
        \sqrt{p N_p} \, U_{N_p}\Longrightarrow \sqrt T\,Z,
    \end{equation}
    and for every $t \in \R$,
    \begin{equation} 
        \lim_{p\downarrow0}\E\left[e^{it\sqrt{pN_p}\,U_{N_p}}\right]    =\frac{1}{1+\sigma^2t^2/2}. 
    \end{equation}
\end{enumerate}
\end{lemma}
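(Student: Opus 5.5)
Both parts rest on the independence of $N_p$ from $(U_n)$ and on the explicit law of $N_p$. The key fact is that $pN_p \Rightarrow T\sim\Exp(1)$ as $p\downarrow0$, because $\Prob(pN_p>s)=(1-p)^{\lfloor s/p\rfloor}\to e^{-s}$. Equally important, $N_p\to\infty$ in probability: for each fixed $n_0$ we have $\Prob(N_p\le n_0)=1-(1-p)^{n_0}\to 0$.

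For part (i), tightness of $\sqrt{pN_p}\,U_{N_p}$ follows by bounding the two factors separately. Given $\varepsilon>0$, choose $M$ with $\sup_n\Prob(|U_n|\ge M)\le\varepsilon$. Conditioning on $N_p$ and using independence gives
\[
\Prob(|U_{N_p}|\ge M)=\sum_n \Prob(N_p=n)\Prob(|U_n|\ge M)\le\varepsilon .
\]
Next, $\E[pN_p]=1$, so by Markov's inequality $\Prob(pN_p\ge K)\le 1/K$. Hence
\[
\Prob\bigl(\sqrt{pN_p}\,|U_{N_p}|\ge \sqrt K M\bigr)\le \varepsilon+1/K,
\]
uniformly in $p\in(0,1)$. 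Equicontinuity at zero of the characteristic functions is a standard consequence of tightness. For any $L$,
\[
|\E e^{itV}-1|\le \E\min(2,|t||V|)\le |t|L+2\Prob(|V|>L),
\]
so choosing $L$ first and then $|t|$ small gives a bound uniform over the family.

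For part (ii), I would show joint convergence by computing the joint characteristic function. Independence lets me write
\[
\E\bigl[e^{isN_p p+iuU_{N_p}}\bigr]=\sum_n \Prob(N_p=n)\,e^{isnp}\,\psi_n(u),
\]
where $\psi_n$ is the characteristic function of $U_n$. Since $\psi_n(u)\to e^{-\sigma^2u^2/2}$, I replace $\psi_n(u)$ by its limit at a cost of at most
\[
\sup_{n>n_0}|\psi_n(u)-e^{-\sigma^2u^2/2}|+2\Prob(N_p\le n_0),
\]
and this tends to zero by first choosing $n_0$ and then letting $p\downarrow0$. The remaining expression is $\E[e^{ispN_p}]\,e^{-\sigma^2u^2/2}$, which converges to $\E[e^{isT}]\,e^{-\sigma^2u^2/2}$. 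This is the characteristic function of $(T,Z)$ with $T$ and $Z$ independent. The claim $\sqrt{pN_p}\,U_{N_p}\Rightarrow\sqrt T Z$ then follows from the continuous mapping theorem applied to $(a,b)\mapsto\sqrt a\,b$.

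Finally, the limit of the characteristic functions holds because the map $v\mapsto e^{itv}$ is bounded and continuous, so $\E[e^{it\sqrt{pN_p}U_{N_p}}]\to \E[e^{it\sqrt T Z}]$. Conditioning on $T$, this equals $\E[e^{-\sigma^2t^2T/2}]=1/(1+\sigma^2t^2/2)$ by the Laplace transform of $\Exp(1)$. The only step needing any care is the uniform replacement of $\psi_n$ by its limit in the mixture, which is handled by the $n_0$ split above.
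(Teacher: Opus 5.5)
Your proposal is correct and follows the paper's proof essentially step for step. You use Markov's inequality on $pN_p$ together with conditioning on $N_p$ for tightness. You factorize the joint characteristic function via independence and replace $\psi_n$ by its Gaussian limit using $N_p\to\infty$ in probability. You finish with L\'evy continuity, the continuous mapping theorem and the $\mathsf{Exp}(1)$ Laplace transform. The only differences are cosmetic: you prove the tightness-to-equicontinuity step directly instead of citing Kallenberg's lemma, and you use an explicit $n_0$-split where the paper invokes dominated convergence.
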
 

\begin{proof}
We first prove part (i). For every $\epsilon, R > 0$, we have 
\begin{align*}
    \Prob( |\sqrt{p N_p} \, U_{N_p}| > R ) &\leq  \Prob( | p N_p| > \epsilon^{-1} ) + \Prob\left( |U_{N_p}| > R\sqrt{\epsilon}\right)  \\
    &\leq \epsilon + \sup_{n \geq 1} \Prob\left( |U_n| >  R \sqrt{\epsilon} \right),
\end{align*}
since $\E[p N_p] = 1$. Since $U_n$ is bounded in probability, for each fixed $\epsilon > 0$, we can choose $R$ sufficiently large so that the supremum term is bounded by $\epsilon$, and thus $  \Prob( |\sqrt{p N_p} \, U_{N_p}| > R ) \leq 2 \epsilon$. This proves that $(\sqrt{p N_p} \, U_{N_p})_{0 < p < 1}$ is bounded in probability. 
The conclusion about characteristic functions follows from Lemma~\ref{lem:chf-equicont}.  

Consider part (ii).  Let $f$ and $g$ be bounded continuous functions.
Independence between $N_p$ and  $(U_n)_{n \geq 1}$ gives
\begin{equation}\label{eq:random-index-factorization}
\E\left[f ( U_{N_p} ) \, g(pN_p)\right] =
\sum_{n=1}^{\infty}\Prob(N_p=n)g(pn) \E[ f(U_n)].  
\end{equation}  
Replacing $\E[ f(U_n)]$ by $\E[f(Z)]$ in~\eqref{eq:random-index-factorization} introduces an error that can be bounded by  
\begin{equation}\label{eq:error-bound}
\sum_{n=1}^\infty \Prob(N_p=n)|g(pn)|  \Big| \E[ f(U_n)] -  \E[f(Z)] \Big|    \leq \|g\|_\infty  \E \Big| \E[ f(U_{N_p}) \mid N_p] -  \E[f(Z)] \Big|. 
\end{equation} 
Since $N_p\to\infty$ in probability as $p\downarrow0$ and $U_n \Rightarrow Z$, we have $\E[ f(U_{N_p}) \mid N_p] \rightarrow \E[f(Z)]$ in probability.  
Further, $| \E[ f(U_{N_p}) \mid N_p] | \leq \|f\|_\infty$, and thus the dominated convergence theorem implies that the right-hand side of~\eqref{eq:error-bound} goes to zero as $p \downarrow 0$.   
Together with $pN_p \Rightarrow T$, this proves
\begin{equation}\label{eq:random-index-product-limit}
\lim_{p\downarrow0}
\E\left[f ( U_{N_p} ) \, g(pN_p)\right]
=\E[f(Z)] \lim_{p\downarrow0} \E[ g(pN_p) ]
=\E[f(Z)]\E[g(T)].
\end{equation}
Taking $f(u)=e^{itu}$ and $g(v)=e^{isv}$ in~\eqref{eq:random-index-product-limit} gives convergence of the joint characteristic functions to that of $(T,Z)$.
L\'{e}vy's continuity theorem proves~\eqref{eq:random-index-joint-convergence}. The convergence of  $\sqrt{p N_p} \, U_{N_p}$ follows from the continuous mapping theorem, which also implies the convergence of the characteristic function.  
The characteristic function limit is obtained by iterated expectation:
\begin{align*}
    \E\left[ e^{ i t \sqrt{pN_p}\, U_{N_p} } \right]
\to \E\left[ \, \E[ e^{ i t \sqrt{T} Z } \mid T ] \, \right] = \int_{0}^{\infty} e^{ - x t^2 \sigma^2 /2} e^{-x} dx 
= \frac{1}{1 +  t^2 \sigma^2/2}, 
\end{align*} 
which concludes the proof. 
\end{proof}

\nocite{billingsley1999convergence}
\begin{lemma}[Billingsley, 1999, Theorem 18.3]\label{lem:mart-CLT}
Let $(\xi_n)_{n \geq 0}$ be a stationary, ergodic martingale-difference sequence with respect to its natural filtration. If $s^2 \coloneqq \E[\xi_0^2] < \infty$,  then 
\begin{equation}
   \frac{1}{\sqrt{n}} \sum_{k=0}^{n-1} \xi_k \Longrightarrow \Normal(0,  s^2). 
\end{equation}
\end{lemma}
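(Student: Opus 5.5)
This is the martingale CLT for stationary ergodic martingale differences (Billingsley, Theorem 18.3). The plan is to reduce it, via characteristic functions, to a Lindeberg-type argument. Fix $t\in\R$ and set $\phi_n(t)=\E[\exp(it n^{-1/2}\sum_{k<n}\xi_k)]$. The goal is to show $\phi_n(t)\to e^{-t^2s^2/2}$; Lévy's continuity theorem then finishes the proof.

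\textbf{Step 1: the conditional variance.} Let $\F_k=\sigma(\xi_0,\dots,\xi_k)$ and $v_k=\E[\xi_k^2\mid\F_{k-1}]$. The process $(v_k)$ need not be stationary in the strict sense, because it is measurable with respect to a growing filtration. To handle this, I would extend $(\xi_n)$ to a two-sided stationary ergodic sequence and condition on the infinite past. The martingale-difference property with respect to the infinite past follows by a reverse-martingale limit argument. Then $v_k=\E[\xi_k^2\mid \F_{-\infty,k-1}]$ is a stationary ergodic integrable sequence with mean $s^2$. By Birkhoff's ergodic theorem, $n^{-1}\sum_{k<n}v_k\to s^2$ almost surely and in $L^1$. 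Similarly, $n^{-1}\sum_{k<n}\xi_k^2\to s^2$.

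\textbf{Step 2: Lindeberg condition.} For each $\epsilon>0$, stationarity gives
\[
n^{-1}\sum_{k<n}\E\bigl[\xi_k^2\one_{\{|\xi_k|>\epsilon\sqrt n\}}\bigr]=\E\bigl[\xi_0^2\one_{\{|\xi_0|>\epsilon\sqrt n\}}\bigr]\to0,
\]
using only $\E[\xi_0^2]<\infty$. This also yields $\max_{k<n}|\xi_k|/\sqrt n\to0$ in probability.

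\textbf{Step 3: McLeish's product argument.} With $Y_{n,k}=\xi_k/\sqrt n$, I would write
\[
e^{itx}=(1+itx)\,e^{-t^2x^2/2+r(tx)},\qquad |r(x)|\le |x|^3 .
\]
Set $T_n=\prod_{k<n}(1+itY_{n,k})$. The martingale property gives $\E[T_n]=1$ whenever $T_n$ is integrable. Then
\[
\exp\Bigl(it\sum_k Y_{n,k}\Bigr)=T_n\exp\Bigl(-\tfrac{t^2}{2}\sum_k Y_{n,k}^2+\sum_k r(tY_{n,k})\Bigr).
\]
By Steps 1--2, $\sum_k Y_{n,k}^2\to s^2$ in probability, and $\sum_k |Y_{n,k}|^3\le \max_k|Y_{n,k}|\sum_k Y_{n,k}^2\to0$. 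So the exponential factor tends to $e^{-t^2s^2/2}$ in probability. McLeish's lemma then gives $\E[\exp(it\sum_k Y_{n,k})]\to e^{-t^2s^2/2}$, provided $(T_n)$ is uniformly integrable and $\E[T_n]\to1$.

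\textbf{The main obstacle.} The hard step is the uniform integrability of $T_n$. We have
\[
|T_n|^2=\prod_k(1+t^2Y_{n,k}^2)\le \exp\Bigl(t^2\sum_k Y_{n,k}^2\Bigr),
\]
which is not bounded a priori. The standard remedy is to stop the product at the first index $J_n$ where $\sum_{k\le J_n}Y_{n,k}^2>2s^2$. The stopped product satisfies $|T_n|\le e^{t^2s^2}(1+|t|\max_k|Y_{n,k}|)$. The factor $\max_k|Y_{n,k}|$ is uniformly integrable in $L^2$ by Step 2, and the stopping changes the sum only on an event of vanishing probability. Applying McLeish's lemma to the truncated array then completes the proof.
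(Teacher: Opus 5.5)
Your proof is correct, but it takes a different route from the paper, which gives no argument for this lemma. The paper imports the result from Billingsley, where it appears in stronger, functional form as a consequence of a general martingale limit theorem. You instead prove the one-dimensional statement directly by McLeish's product method.

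\textbf{The key step.} Stopping at the first index where $\sum_{j\le k}Y_{n,j}^2$ exceeds $2s^2$ gives $|T_n'|^2\le e^{2t^2s^2}(1+t^2\max_kY_{n,k}^2)$. The Lindeberg bound of Step~2 also gives $\E[\max_kY_{n,k}^2]\to0$. So $(T_n')$ is bounded in $L^2$, hence uniformly integrable, and McLeish's lemma applies.

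\textbf{What each route buys.} The citation delivers the invariance principle for free. Your argument is elementary and self-contained. It is also all the paper needs, since the lemma is only used for one-dimensional convergence of $J_n$, $J_n^*$ and $n^{-1/2}\sum_j\xi_j$.

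\textbf{Step~1 can be dropped.} Step~3 requires only $n^{-1}\sum_{k<n}\xi_k^2\to s^2$, which Birkhoff's theorem gives directly for the one-sided stationary ergodic sequence. The two-sided extension and the conditional variances $v_k$ are never used. They would matter only if you verified the conditional-variance hypothesis of a Lindeberg-type martingale CLT. Incidentally, the infinite-past limit there is L\'evy's upward theorem, since the $\sigma$-fields grow as $j\to-\infty$, not a reverse-martingale limit.

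\textbf{Two small precisions.}
\begin{enumerate}
\item The bound $|r(x)|\le|x|^3$ holds for $|x|\le1$, not for all $x$. This suffices because $\max_k|tY_{n,k}|\to0$ in probability.
\item You should state explicitly that the stopped variables $Y_{n,k}\one_{\{\sum_{j<k}Y_{n,j}^2\le 2s^2\}}$ remain martingale differences, because the indicator is $\F_{k-1}$-measurable. This is what gives $\E[T_n']=1$.
\end{enumerate}
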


\begin{lemma}[Bednorz et al., 2008, Theorem 3.4]
\label{lem:bednorz}
Let $( Z_k )_{k \ge 1}$ be independent and identically distributed real-valued random variables and
$R_k = \sum_{i=1}^k Z_i$, with $R_0=0$.
Let $(J_n)$ be positive integer-valued random variables such that for some $\vartheta \in (0, \infty)$,
\begin{align*}
\frac{J_n}{n} \to \vartheta, \text{ in probability.}
\end{align*}
If $( R_{J_n}/\sqrt{n} )_n$ is bounded in probability, then
$\E(Z_1) = 0$ and  $\E(Z_1^2) < \infty.$
\end{lemma}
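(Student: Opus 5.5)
This is a classical result about random sums of i.i.d. variables, so I would prove it directly rather than through the Markov chain machinery. The plan is to use the law of large numbers on the random index and symmetrization to reduce to tightness of ordinary partial sums $R_n/\sqrt{n}$, and then invoke the standard fact that tightness of normalized i.i.d. sums forces a finite second moment and zero mean.

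\emph{Step 1: reduce to deterministic indices along a random window.} Since $J_n/n \to \vartheta$ in probability, with probability at least $1-\epsilon$ we have $J_n \in [\lfloor n\vartheta/2\rfloor, 2n\vartheta]$ for large $n$. I would first symmetrize: take an independent copy $(Z_k', J_n')$ of the whole system and set $\tilde Z_k = Z_k - Z_k'$. The tightness of $R_{J_n}/\sqrt n$ and $R'_{J'_n}/\sqrt n$ only gives control of $R_{J_n}-R'_{J'_n}$, however, which has mismatched indices. To handle this, I would instead use L\'evy's maximal (Ottaviani) inequality on the interval $[a_n,b_n]=[n\vartheta/2,2n\vartheta]$. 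Conditionally on $J_n$ landing in that window, $R_{J_n}-R_{a_n}$ is an increment, and Ottaviani-type inequalities for the i.i.d. walk give
\[
\Prob\Bigl(\max_{a_n\le k\le b_n}|R_k-R_{a_n}|>2x\Bigr)\le \frac{\Prob(|R_{b_n}-R_{a_n}|>x)}{1-\max_{k}\Prob(|R_{b_n}-R_k|>x)}.
\]

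The catch is that $J_n$ may depend on the $Z$'s. This is the \textbf{main obstacle}, since $J_n$ is not assumed independent of $(Z_k)$. I would handle it by a concentration/anti-concentration argument. If $R_n/\sqrt n$ were not tight, then along a subsequence its symmetrized version would spread. The L\'evy concentration function $Q(R_k;x\sqrt n)$ would then tend to $0$ uniformly over $k\in[a_n,b_n]$, because concentration is monotone nonincreasing in $k$ for sums of independent variables. For $J_n$ ranging over the window, a union over dyadic blocks does not directly work. So I would instead use the Kolmogorov--Rogozin inequality together with the observation that the random walk on $[a_n,b_n]$ spends little time in any interval of width $x\sqrt n$. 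Concretely, I would use $\Prob(|R_{J_n}|\le x\sqrt n)\le \Prob(\exists k\in[a_n,b_n]: |R_k|\le x\sqrt n)+\epsilon$ and bound the first term by a maximal inequality: after the first hitting time of $[-x\sqrt n,x\sqrt n]$, the walk must stay within $2x\sqrt n$ for the rest of the window. That has small probability if $R_{m}/\sqrt n$ is non-concentrated for $m\approx n\vartheta/4$, by the strong Markov property and the concentration bound applied to the remaining increment of length at least $a_n/2$ (after trimming the window). This yields tightness of $R_{\lfloor n\vartheta\rfloor}/\sqrt n$, and hence of $R_n/\sqrt n$.

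\emph{Step 2: finite variance.} Given tightness of $R_n/\sqrt n$, I would symmetrize to get tightness of $\tilde R_n/\sqrt n$ with $\tilde Z$ symmetric. By the classical converse (Feller), $\sup_n n\Prob(|\tilde Z_1|>x\sqrt n)<\infty$ and $\sup_n \E[\tilde Z_1^2\one\{|\tilde Z_1|\le\sqrt n\}]<\infty$. This follows from L\'evy's inequality $\Prob(\max_k|\tilde Z_k|>2x\sqrt n)\le 2\Prob(|\tilde R_n|>x\sqrt n)$ and from the characteristic function bound $|\E e^{it\tilde R_n/\sqrt n}|=\phi(t/\sqrt n)^n$ staying away from $0$ near $t=0$ by equicontinuity. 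The latter gives $n(1-\phi(t/\sqrt n))$ bounded, and so $\E[\tilde Z_1^2\one\{|\tilde Z_1|\le \sqrt n/|t|\}]\lesssim 1$. Monotone convergence then gives $\E\tilde Z_1^2<\infty$, hence $\E Z_1^2<\infty$ (a variable whose symmetrization is in $L^2$ is in $L^2$, by a median argument).

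\emph{Step 3: zero mean.} With $Z_1\in L^2$, the law of large numbers gives $R_{J_n}/n\to\vartheta\E Z_1$ in probability. Tightness of $R_{J_n}/\sqrt n$ forces $R_{J_n}/n\to0$, so $\E Z_1=0$.
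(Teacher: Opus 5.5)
Your Steps 2 and 3 are fine. The characteristic-function bound $\phi(t_0/\sqrt n)^n\ge 1/2$ gives $\sup_n\E[\tilde Z_1^2\one\{|\tilde Z_1|\le\sqrt n/|t_0|\}]<\infty$, and once $Z_1\in L^1$ the law of large numbers along $J_n$ forces $\E Z_1=0$. The paper itself just cites Bednorz et al. for this lemma.

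The gap is in Step 1, which carries the entire difficulty, namely that $J_n$ may depend on the $Z_k$. The reduction $\Prob(|R_{J_n}|\le x\sqrt n)\le\Prob(\exists k\in[a_n,b_n]:|R_k|\le x\sqrt n)+\epsilon$ is correct. The next claim is false: after its first visit to $[-x\sqrt n,x\sqrt n]$, the walk does not \emph{have to stay} within $2x\sqrt n$ for the rest of the window. A single visit realizes the event, and $J_n$ is allowed to be exactly that visiting time. Your remark about occupation times fails for the same reason.

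Nor is the visiting probability small for walks that are not $\sqrt n$-tight. Take $Z_1$ symmetric with $\Prob(|Z_1|>t)=t^{-2}$ for $t\ge 1$. Then $\E Z_1^2=\infty$, and $R_{\lfloor ns\rfloor}/\sqrt{n\log n}$ converges to Brownian motion. So the walk changes sign during $[n\vartheta/2,2n\vartheta]$ with probability asymptotically at least $1-\frac{2}{\pi}\arcsin\frac12=\frac23$. At a sign change $R_{k-1}<0\le R_k$, one of $|R_{k-1}|,|R_k|$ is at most $x\sqrt n$ unless $|Z_k|>2x\sqrt n$. That happens anywhere in the window with probability at most $\vartheta/x^2$. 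Hence the right-hand side of your bound stays near $2/3$ and does not become small.

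Separately, your premise that non-tightness of $R_n/\sqrt n$ makes $Q(R_k;x\sqrt n)$ (or its symmetrized version) vanish is wrong when the non-tightness is a drift, i.e.\ $\E Z_1^2<\infty$ and $\E Z_1\ne 0$. So the contradiction scheme never starts in that case, while Step 3, which is supposed to handle the mean, presupposes Step 1.

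What is missing is a genuine use of $J_n/n\to\vartheta$ through windows $[m,M]=[(1-\delta)\vartheta n,(1+\delta)\vartheta n]$ whose relative width $\delta$ is arbitrarily small. A fixed proportional window like $[n\vartheta/2,2n\vartheta]$ cannot work. In the Brownian example above, the probability of a zero in $[(1-\delta)\vartheta,(1+\delta)\vartheta]$ is of order $\sqrt\delta$, and that is what actually contradicts tightness.

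On top of this you need a real decoupling and anti-concentration argument showing that $\Prob(\min_{m\le k\le M}|R_k|\le x\sqrt n)$ is small unless $\E Z_1^2<\infty$ and $\E Z_1=0$. One natural ingredient is that $R_m$ is independent of the increment path $(R_k-R_m)_{m\le k\le M}$. You would then compare the spread of $R_m$ at scale $\sqrt n$ with the size of that increment path over only $2\delta\vartheta n$ steps, treating the centering or drift separately. This is the actual content of the cited theorem, and Step 1 as written does not supply it.
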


\nocite{kallenberg1997foundations} 
\begin{lemma}[Kallenberg, 1997, Lemma 4.2]\label{lem:chf-equicont} 
A family of random variables $(Y_i)_{i\in\mathcal I}$ is bounded in probability if and only if their characteristic functions are equicontinuous at zero.    
\end{lemma}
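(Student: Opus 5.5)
We will prove the two directions separately. Both rest on elementary bounds for $1-\varphi_i(t)$, where $\varphi_i(t)=\E[e^{itY_i}]$. Throughout, "equicontinuous at zero" means $\sup_{i\in\mathcal I}|\varphi_i(t)-1|\to0$ as $t\to0$. Since $|\varphi_i(t)-\varphi_i(s)|\le \E|e^{i(t-s)Y_i}-1|$, this is the relevant uniform condition at the origin.

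For the forward direction, assume $(Y_i)$ is bounded in probability. For any $N>0$ we split on the event $\{|Y_i|\le N\}$ and use $|e^{ix}-1|\le \min(|x|,2)$:
\begin{equation*}
|\varphi_i(t)-1|\le \E\bigl|e^{itY_i}-1\bigr| \le |t|N + 2\,\Prob(|Y_i|>N).
\end{equation*}
Given $\varepsilon>0$, tightness lets us choose $N$ with $\sup_i\Prob(|Y_i|>N)\le\varepsilon/4$. Then for $|t|\le \varepsilon/(2N)$ the right-hand side is at most $\varepsilon$ uniformly in $i$, which is the claimed equicontinuity.

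For the converse we use the classical truncation inequality. For $u>0$ and a real random variable $Y$ with characteristic function $\varphi$, Fubini gives
\begin{equation*}
\frac{1}{2u}\int_{-u}^{u}\bigl(1-\varphi(t)\bigr)\,dt=\E\Bigl[1-\frac{\sin(uY)}{uY}\Bigr],
\end{equation*}
with the integrand read as $0$ when $Y=0$. The integrand on the right is nonnegative, and since $|\sin x|\le1$ it is at least $1/2$ whenever $|uY|\ge2$. Hence
\begin{equation*}
\Prob\bigl(|Y|\ge 2/u\bigr)\le \frac1u\int_{-u}^{u}\bigl(1-\varphi(t)\bigr)\,dt\le 2\sup_{|t|\le u}|1-\varphi(t)|.
\end{equation*}
Applying this to each $Y_i$ and taking the supremum over $i$ gives $\sup_i\Prob(|Y_i|\ge2/u)\le 2\sup_i\sup_{|t|\le u}|1-\varphi_i(t)|$. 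Equicontinuity at zero makes the right-hand side tend to $0$ as $u\downarrow0$. Setting $N=2/u$ then yields $\lim_{N\to\infty}\sup_i\Prob(|Y_i|\ge N)=0$, which is boundedness in probability.

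The only step needing care is the truncation inequality, and in particular the Fubini interchange and the lower bound $1-\sin x/x\ge 1/2$ for $|x|\ge2$. The interchange is justified because the integrand is bounded and $[-u,u]$ is finite. The rest is routine bookkeeping of the uniformity in $i$.
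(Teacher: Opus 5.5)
Your proof is correct. The paper gives no proof of its own and simply cites Kallenberg; your argument is the standard one behind that result, using $|e^{ix}-1|\le |x|\wedge 2$ for the forward direction and the averaged tail bound $\Prob(|Y|\ge 2/u)\le u^{-1}\int_{-u}^{u}(1-\varphi(t))\,dt$ for the converse, and it needs nothing further.
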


\begin{lemma}[Kac occupation formula]
\label{lem:direct-kac}
Let $Q$ be an ergodic Markov kernel with invariant probability measure $\mu$, and let $\B$ be measurable with $\mu(\B)>0$.  Write
\[
  \tau_{\B}^+=\inf\{n\geq1:X_n\in \B\}.
\]
Then, for every nonnegative measurable $f$,
\begin{equation*}
  \mu(f)=\int_{\B}\mu(dx)\,
  \E_x\!\left[\sum_{k=0}^{\tau_{\B}^+-1}f(X_k)\right]
  = \mu(\B) \, \E_{\mu( \cdot |\B)}\!\left[\sum_{k=0}^{\tau_{\B}^+-1}f(X_k)\right]. 
  \label{eq:direct-kac}
\end{equation*} 
The identity extends to signed $f\in L^1(\mu)$ by applying it to $f^+$ and $f^-$.
\end{lemma}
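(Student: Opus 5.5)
This is the Kac occupation formula, and I would prove it by a cycle decomposition of the stationary measure. Define the measure
\[
\nu(F)=\int_{\B}\mu(dx)\,\E_x\Bigl[\sum_{k=0}^{\tau_{\B}^+-1}\one_F(X_k)\Bigr], \qquad F\in\F .
\]
The plan is to show that $\nu$ is an invariant measure for $Q$, that it is $\sigma$-finite, and that it agrees with $\mu$ on $\B$. Uniqueness of invariant measures for recurrent $\phi$-irreducible kernels (Meyn--Tweedie, Theorem 10.0.1) then gives $\nu=\mu$. The identity for nonnegative $f$ follows by the usual simple-function and monotone-convergence argument. The second form of the identity is just a rewriting with $\mu(\cdot\mid\B)$. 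Ergodicity of $Q$ supplies recurrence and $\mu$-irreducibility, as recalled in Section~\ref{sec:setup}; this gives $\tau_{\B}^+<\infty$ $\E_x$-almost surely for $\mu$-a.e.\ $x\in\B$.

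\textbf{Step 1: $\nu$ agrees with $\mu$ on $\B$.} For $F\subseteq\B$, only the $k=0$ term contributes, because $X_k\notin\B$ for $1\le k<\tau_{\B}^+$. Hence $\nu(F)=\mu(F)$, and in particular $\nu(\B)=\mu(\B)<\infty$.

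\textbf{Step 2: invariance.} Write $\nu(F)=\sum_{k\ge0}\int_{\B}\mu(dx)\,\Prob_x(X_k\in F,\ k<\tau_{\B}^+)$. By the Markov property,
\[
\nu Q(F)=\sum_{k\ge0}\int_{\B}\mu(dx)\,\Prob_x(X_{k+1}\in F,\ k+1\le\tau_{\B}^+).
\]
Split this according to whether $k+1<\tau_{\B}^+$ or $k+1=\tau_{\B}^+$. The first part equals $\nu(F)-\mu(F\cap\B)$. The second part equals $\int_{\B}\mu(dx)\,\Prob_x(X_{\tau_{\B}^+}\in F)$, which is $\mu$ applied to the hitting distribution on $\B$. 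So invariance reduces to showing that $\mu|_{\B}$ is invariant for the induced (trace) chain on $\B$.

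\textbf{Step 3: main obstacle, invariance of the induced chain.} This is the step I expect to be hardest. I would prove it by a last-exit decomposition using stationarity of $\mu$. Run the stationary chain from $X_0\sim\mu$. For $F\subseteq\B$, decompose $\Prob_\mu(X_n\in F)=\mu(F)$ according to the last time $j<n$ at which the chain was in $\B$, together with the event that it has not been in $\B$ up to time $n-1$. Let $n\to\infty$. Recurrence makes the probability of the event of no visit to $\B$ during $[0,n-1]$ tend to $0$. Summing over the last-exit times then yields exactly $\int_{\B}\mu(dx)\,\Prob_x(X_{\tau_{\B}^+}\in F)=\mu(F)$.

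An alternative route avoids the induced chain: apply the same last-exit decomposition to $\Prob_\mu(X_n\in F)$ for general $F$. This directly gives $\mu(F)\ge\nu(F)$, with equality once the probability of not visiting $\B$ before time $n$ vanishes. That variant needs neither uniqueness nor $\sigma$-finiteness, and I would try it first.

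For signed $f\in L^1(\mu)$, apply the identity to $f^+$ and $f^-$. Both sides are finite, so they can be subtracted.
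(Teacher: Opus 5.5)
Your proof is correct, but it takes a different route from the paper, whose proof is a pure citation. The paper checks that $\Prob_x(\tau_{\B}^+<\infty)=1$ for $\mu$-a.e.\ $x$ (Douc et al., Corollary~5.2.13) and then quotes Theorem~3.6.5 of Douc et al.\ for the identity itself. Your last-exit argument is in effect a self-contained proof of that quoted theorem. Stationarity alone gives, for every $F$ and $n$,
\[
\mu(F)=\int_{\B}\mu(dx)\,\E_x\Bigl[\sum_{m=0}^{n\wedge(\tau_{\B}^+-1)}\one_F(X_m)\Bigr]+\Prob_\mu\bigl(X_0,\dots,X_n\notin\B,\ X_n\in F\bigr).
\]
Hence $\mu\ge\nu$ holds for every stationary chain. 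Ergodicity is used only to make the remainder vanish, through $\Prob_\mu(X_k\in\B\text{ for some }k\ge0)=1$, which is the same input the paper takes from Corollary~5.2.13. This is the version to write up: it is elementary and needs no uniqueness theory at all.

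Your primary route (invariance of $\nu$ plus uniqueness) is heavier and, as written, incomplete. The Meyn--Tweedie uniqueness theorem is about $\sigma$-finite invariant measures. Your plan lists $\sigma$-finiteness as something to show, but Steps~1--3 never establish it. It is easy to supply in two steps.
\begin{enumerate}
\item Let $N=\{y:\Prob_y(X_k\in\B\text{ for some }k\ge0)=0\}$. Then $\nu(N)=0$: otherwise the Markov property gives $\Prob_x(\tau_{\B}^+=\infty)>0$ on a $\mu$-positive subset of $\B$, contradicting a.s.\ return.
\item By invariance, $\nu(\{y:Q^n(y,\B)>1/k\})\le k\,\nu(\B)=k\,\mu(\B)<\infty$, and these sets cover $\X\setminus N$.
\end{enumerate}
Note also that Step~3 is already the last-exit computation, restricted to $F\subseteq\B$. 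Running it for general $F$ finishes the proof directly, so the detour through Step~2 and uniqueness buys nothing.
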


\begin{proof}
The assumptions  on $Q$ imply that $\Prob_x( \tau_{\B}^+ < \infty) = 1$ for $\mu$-almost every $x$~\citep[Corollary 5.2.13]{DoucMoulinesPriouretSoulier2018}. 
The occupation identity is then exactly~\citet[Theorem 3.6.5]{DoucMoulinesPriouretSoulier2018}.  
\end{proof}

\begin{lemma}[Douc et al., 2018, Lemma 22.5.4]
\label{lem:direct-endpoints}
Let $Q$ be reversible with invariant probability $\mu$.  If
$\lVert Q^n(x,\cdot)-\mu\rVert_{\mathrm{TV}}\longrightarrow0$ for $\mu$-almost every $x$, then, for every $h\in L^2(\mu)$ with $\mu(h) = 0$, 
\[
  \langle h, Q^n h\rangle_{L^2(\mu)}\longrightarrow0
  \qquad\text{and}\qquad
  \mathcal{E}_h(\{-1,1\})=0, 
\]
where $\mathcal{E}_h$ is the spectral measure of $h$. 
\end{lemma}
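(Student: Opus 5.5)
The plan is to reduce both claims to the spectral representation
\[
\langle h, Q^n h\rangle_{L^2(\mu)} = \int_{[-1,1]} \lambda^n \, \mathcal{E}_h(d\lambda),
\]
which holds because $Q$ is a self-adjoint contraction on $L^2(\mu)$.

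I will first prove $\langle h, Q^n h\rangle_{L^2(\mu)} \to 0$. From this, the statement about the endpoints follows by dominated convergence. Since $|\lambda^n|\le 1$ and $\mathcal{E}_h$ is a finite measure, $\lambda^n \to 0$ on $(-1,1)$, so along even $n$ the integral tends to $\mathcal{E}_h(\{1\})+\mathcal{E}_h(\{-1\})$. Along odd $n$ it tends to $\mathcal{E}_h(\{1\})-\mathcal{E}_h(\{-1\})$. Both limits must be $0$, which forces $\mathcal{E}_h(\{-1,1\})=0$.

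For the decay of the covariances, I first treat bounded centered $h$. For $\mu$-almost every $x$, the variational form of total variation gives
\[
|(Q^n h)(x)| = |(Q^n h)(x) - \mu(h)| \le 2\norm{h}_\infty \lVert Q^n(x,\cdot)-\mu\rVert_{\mathrm{TV}} \to 0 .
\]
The functions $Q^nh$ are uniformly bounded by $\norm{h}_\infty$, so dominated convergence yields $\norm{Q^n h}_{L^1(\mu)}\to 0$. Hence
\[
|\langle h, Q^n h\rangle_{L^2(\mu)}| \le \norm{h}_\infty \norm{Q^n h}_{L^1(\mu)} \to 0 .
\]

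For general centered $h\in L^2(\mu)$, I will use an approximation argument. Given $\varepsilon>0$, pick a bounded truncation of $h$ and subtract its mean. This gives a bounded centered $h_\varepsilon$ with $\norm{h-h_\varepsilon}_{L^2(\mu)}<\varepsilon$; subtracting the mean does not increase the $L^2$ error beyond a constant factor, since $\mu(h)=0$. Because $Q^n$ is an $L^2(\mu)$-contraction,
\[
|\langle h, Q^n h\rangle - \langle h_\varepsilon, Q^n h_\varepsilon\rangle| \le \norm{h-h_\varepsilon}\,\norm{h} + \norm{h_\varepsilon}\,\norm{h-h_\varepsilon} \le \varepsilon(2\norm{h}+\varepsilon).
\]
Taking $\limsup_n$ and then $\varepsilon\downarrow 0$ finishes the proof.

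The only delicate point is that total-variation convergence holds only $\mu$-almost everywhere and not uniformly. This is why I pass through $L^1$ convergence via dominated convergence for bounded functions, and then extend by density, rather than trying to bound $\langle h,Q^nh\rangle$ directly for unbounded $h$.
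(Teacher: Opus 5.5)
Your proof is correct. The paper gives no argument for this statement: it is imported from Douc et al.\ (2018, Lemma~22.5.4) and invoked in the proof of Proposition~\ref{prop:skeleton-reduction}. So there is no in-paper route to match, and what you have written is a valid self-contained replacement for the citation.

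Each step checks out:
\begin{itemize}
\item For bounded centered $h$, the pointwise bound $|(Q^nh)(x)|\le 2\|h\|_\infty\|Q^n(x,\cdot)-\mu\|_{\mathrm{TV}}$ plus dominated convergence gives $\|Q^nh\|_{L^1(\mu)}\to0$.
\item Truncating and recentering costs at most a factor $2$ in $L^2$ error, since $|\mu(h_M)|=|\mu(h_M-h)|\le\|h_M-h\|_{L^2(\mu)}$.
\item The $L^2$-contraction estimate then passes the decay to every centered $h\in L^2(\mu)$.
\end{itemize}

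Compared with the bare citation, your argument makes one thing visible. The covariance decay uses only $\mu$-invariance and $\mu$-a.e.\ total-variation convergence. Reversibility enters solely to make $\mathcal{E}_h$ a measure on $[-1,1]$, so the endpoint claim is a pure spectral-calculus consequence.

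Two small streamlinings are available:
\begin{itemize}
\item The even subsequence alone suffices for the endpoints. Indeed $\int\lambda^{2k}\,\mathcal{E}_h(d\lambda)\to\mathcal{E}_h(\{-1\})+\mathcal{E}_h(\{1\})$, and both atoms are nonnegative, so the odd-$n$ limit is unnecessary.
\item The same dominated-convergence-plus-density argument, run in $L^2$ instead of $L^1$, gives the stronger statement $\|Q^nh\|_{L^2(\mu)}\to0$. Both conclusions then follow at once from $\|Q^nh\|_{L^2(\mu)}^2=\langle h,Q^{2n}h\rangle_{L^2(\mu)}=\int\lambda^{2n}\,\mathcal{E}_h(d\lambda)$ and $|\langle h,Q^nh\rangle_{L^2(\mu)}|\le\|h\|_{L^2(\mu)}\|Q^nh\|_{L^2(\mu)}$.
\end{itemize}
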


\begin{lemma}[Glynn and Whitt, 2002, Theorem 2.1]
\label{lem:direct-glynn-whitt} 
Let $(X_n)$ be a positive-recurrent classically regenerative process.  Let
$T_0<T_1<\cdots$ be regeneration epochs, suppose the noninitial cycles are
i.i.d., and put $\mu_T=\E(T_2-T_1)<\infty$.  For a reward $f$ define
\[
 Z_i(\gamma)=\sum_{k=T_{i-1}}^{T_i-1}\{f(X_k)-\gamma\}.
\]
Then
\[
 \frac{\sum_{k=0}^{n-1}f(X_k)-n\gamma}{\sqrt n}
 \Longrightarrow \Normal(0,\sigma^2)
\]
if and only if
\[
  \E Z_2(\gamma)=0,
  \qquad
  \E[Z_2(\gamma)^2]=\mu_T\sigma^2<\infty.
\]
\end{lemma}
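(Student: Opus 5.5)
The plan is to reduce both directions to statements about the i.i.d.\ cycle sums $Z_i(\gamma)$, using a random-index argument, and to show that the boundary terms are $O_P(1)$ without any moment assumptions.

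\emph{Decomposition.} Let $K(n)=\max\{i: T_i\le n\}$ be the number of completed regenerations by time $n$, and write
\[
\sum_{k=0}^{n-1}\{f(X_k)-\gamma\}=\sum_{k=0}^{T_0-1}\{f(X_k)-\gamma\}+\sum_{i=1}^{K(n)}Z_i(\gamma)+\sum_{k=T_{K(n)}}^{n-1}\{f(X_k)-\gamma\},
\]
taking the initial piece to be empty when $n\le T_0$.
\begin{enumerate}
\item The initial piece is an a.s.\ finite random variable, so it vanishes after division by $\sqrt n$.
\item By the renewal theorem (the cycle lengths are i.i.d.\ with mean $\mu_T<\infty$), $K(n)/n\to 1/\mu_T$ almost surely.
\item I will show the terminal piece is $O_P(1)$. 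The pair (current cycle, position within it) at time $n$ has a distribution converging, along a Ces\`aro or aperiodic subsequence, to the length-biased stationary version. Under that limit the current cycle has length with mean $\E(T_2-T_1)^2/\mu_T$, which may be infinite, but the length is a.s.\ finite. Hence the absolute partial sum $\sum_{k=T_{K(n)}}^{T_{K(n)+1}-1}|f(X_k)-\gamma|$ is tight.
\end{enumerate}
This tightness of the incomplete-cycle term is the step I expect to be delicate. The point is that it needs no moment of the cycle sum, only $\mu_T<\infty$. I would verify it directly: for fixed $M$, the probability that the current cycle's absolute sum exceeds $M$ equals $\sum_j \Prob(\text{a cycle starts at } n-j)\,\Prob(W>M,\ \text{length}>j)$, where $W=\sum_{k=T_1}^{T_2-1}|f(X_k)-\gamma|$. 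The renewal density $\Prob(\text{a cycle starts at } n-j)$ is bounded by one, and $\sum_j \Prob(W>M,\ \text{length}>j)=\E[\text{length};\,W>M]$, which tends to $0$ as $M\to\infty$ by dominated convergence. This argument avoids aperiodicity issues entirely.

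\emph{Sufficiency.} Assume $\E Z_2(\gamma)=0$ and $\E Z_2(\gamma)^2<\infty$. Anscombe's theorem for the random index $K(n)$ gives
\[
n^{-1/2}\sum_{i=1}^{K(n)}Z_i(\gamma)\Rightarrow\Normal\bigl(0,\E Z_2(\gamma)^2/\mu_T\bigr).
\]
Alternatively, the same conclusion follows by the argument of Lemma~\ref{lem:random-index} with $K(n)/n\to 1/\mu_T$ in place of the geometric index. Combined with the boundary estimates and Slutsky's theorem, this yields the CLT with $\sigma^2=\E Z_2(\gamma)^2/\mu_T$.

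\emph{Necessity.} Assume the CLT holds. Then $\bigl(\sum_{k<n}\{f(X_k)-\gamma\}\bigr)/\sqrt n$ is tight, and subtracting the two $O_P(1)$ boundary terms shows that $R_{K(n)}/\sqrt n$ is tight, where $R_k=\sum_{i=1}^k Z_i(\gamma)$. Lemma~\ref{lem:bednorz} with $J_n=K(n)$ and $\vartheta=1/\mu_T$ then gives $\E Z_2(\gamma)=0$ and $\E Z_2(\gamma)^2<\infty$. The indexing is harmless because the noninitial cycles are i.i.d.\ and the first one is negligible. Sufficiency now produces a normal limit with variance $\E Z_2(\gamma)^2/\mu_T$, and uniqueness of weak limits forces this to equal $\sigma^2$, i.e.\ $\E Z_2(\gamma)^2=\mu_T\sigma^2$.
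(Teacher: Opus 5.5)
The paper does not prove this lemma. It is imported from Glynn and Whitt and used only in the remark after Proposition~\ref{lem:finite-Z1-square}.

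Your argument is a correct, self-contained proof. Its necessity half is essentially the paper's own proof of Proposition~\ref{lem:finite-Z1-square}, transplanted to a general regenerative process:
\begin{itemize}
\item the same split into an initial segment, complete cycles and an incomplete terminal cycle;
\item then Lemma~\ref{lem:bednorz}, with the renewal counter as the random index.
\end{itemize}
As the paper does with $\widetilde{\omega}_n\vee 1$, you should use $K(n)\vee 1$ and shift indices, so that Lemma~\ref{lem:bednorz} is applied to the i.i.d.\ sequence $Z_2,Z_3,\dots$.

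The genuine difference is how you handle the terminal piece.
\begin{itemize}
\item The paper relies on stationarity of the split chain. It shows the age $E_n$ is tight via $\Prob(I_0=\dots=I_{M-1}=0)\to 0$. It then controls finitely many terms $g(Y_{n-j})$, each of which has the stationary law.
\item You bound the absolute sum over the straddling cycle by $\Prob(T_1>n)+\E[T_2-T_1;\,W>M]$. This uses only that the renewal mass at $n-j$ is at most one, plus regenerative independence.
\end{itemize}
Your bound needs only $\mu_T<\infty$ and controls the entire straddling cycle rather than a fixed window. It is valid for delayed, nonstationary regenerative processes, which is exactly the generality the lemma asserts, so it is the right choice here. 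The sufficiency half (Anscombe's theorem plus Slutsky) is fine, as is identifying $\sigma^2=\E[Z_2(\gamma)^2]/\mu_T$ by uniqueness of weak limits.

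One aside is wrong and should be dropped. The sufficiency step cannot be obtained ``by the argument of Lemma~\ref{lem:random-index}.'' That proof factorizes $\E[f(U_{N_p})\,g(pN_p)]$ using independence of $N_p$ from $(U_n)$. Your $K(n)$ is a function of the cycle lengths, so it is dependent on the $Z_i(\gamma)$, and that factorization fails. Anscombe's (R\'enyi's) theorem is what makes this step work. It needs only $K(n)/n\to 1/\mu_T$ in probability, because for i.i.d.\ square-integrable summands Anscombe's uniform-continuity condition follows from Kolmogorov's maximal inequality, with no independence between the index and the summands required.
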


\begin{lemma}[Fej\'er's theorem]\label{lem:fejer-theorem}
Let $v$ be an integrable $2\pi$-periodic function, and define
\begin{equation} 
(\Fm_N v)(x) =\sum_{|k|\leq N}\left(1-\frac{|k|}{N+1}\right)\Fcoef{v}{k} e^{ikx}.
\end{equation}  
If $v$ is continuous at $x$, then $(\Fm_N v)(x) \longrightarrow v(x)$.  
If $v$ is continuous everywhere, then the convergence is uniform.
Moreover, if $v \geq0$, then $\Fm_N v \geq0$ for every $N$.
\end{lemma}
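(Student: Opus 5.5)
The plan is to write $\Fm_N v$ as a convolution with the Fejér kernel and then run the standard approximate-identity argument. Substituting the definition of $\Fcoef{v}{k}$ into the definition of $(\Fm_N v)(x)$ and exchanging the finite sum with the integral gives
\begin{equation*}
(\Fm_N v)(x)=\frac{1}{2\pi}\int_{-\pi}^{\pi} v(x-s)\,K_N(s)\,ds,
\qquad
K_N(s)=\sum_{|k|\leq N}\left(1-\frac{|k|}{N+1}\right)e^{iks}.
\end{equation*}
This uses the change of variables $s=x-y$ and $2\pi$-periodicity of $v$.

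First I would establish three properties of $K_N$.
\begin{enumerate}[label=(\alph*)]
\item The closed form
\begin{equation*}
K_N(s)=\frac{1}{N+1}\left(\frac{\sin((N+1)s/2)}{\sin(s/2)}\right)^2 .
\end{equation*}
This follows from $K_N(s)=\frac{1}{N+1}\bigl|\sum_{j=0}^{N}e^{ijs}\bigr|^2$. Expanding that squared modulus, the coefficient of $e^{iks}$ counts the pairs $(j,j')$ with $j-j'=k$, and there are $N+1-|k|$ of them. Summing the geometric series then gives the closed form.
\item $K_N\geq 0$, which is immediate from (a).
\item $\frac{1}{2\pi}\int_{-\pi}^{\pi}K_N=1$, because only the $k=0$ term survives integration.
\item For every $\delta\in(0,\pi)$,
\begin{equation*}
\sup_{\delta\leq|s|\leq\pi}K_N(s)\leq \frac{1}{(N+1)\sin^2(\delta/2)}\to 0 .
\end{equation*}
\end{enumerate}

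The nonnegativity claim follows at once from (b). If $v\geq 0$, the convolution of $v$ with a nonnegative kernel is nonnegative.

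For pointwise convergence at a continuity point $x$, I would use (c) to write
\begin{equation*}
(\Fm_N v)(x)-v(x)=\frac{1}{2\pi}\int_{-\pi}^{\pi}\bigl(v(x-s)-v(x)\bigr)K_N(s)\,ds .
\end{equation*}
Split the integral into $|s|<\delta$ and $\delta\leq|s|\leq\pi$.
\begin{itemize}
\item On $|s|<\delta$: choose $\delta$ so that $|v(x-s)-v(x)|<\varepsilon$ there. By (b) and (c), this piece is at most $\varepsilon$.
\item On $\delta\leq|s|\leq\pi$: bound this piece by $\bigl(\norm{v}_{L^1}/(2\pi)+|v(x)|\bigr)$ times $\sup_{\delta\leq|s|\leq\pi}K_N(s)$. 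This tends to $0$ by (d).
\end{itemize}
Letting $N\to\infty$ and then $\varepsilon\downarrow 0$ gives $(\Fm_N v)(x)\to v(x)$.

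For uniform convergence when $v$ is continuous everywhere, the same splitting works with two changes. Periodicity makes $v$ uniformly continuous and bounded, so $\delta$ can be chosen independently of $x$. The tail piece is then bounded by $2\norm{v}_\infty\sup_{\delta\leq|s|\leq\pi}K_N(s)$, which is also uniform in $x$.

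The only genuinely computational step, and the one I expect to need the most care, is the closed form in (a). The cleanest route is to verify the identity $K_N(s)=\frac{1}{N+1}\bigl|\sum_{j=0}^{N}e^{ijs}\bigr|^2$ by counting coefficients, as described. This route also gives nonnegativity directly, without relying on the sine formula.
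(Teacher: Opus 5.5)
Your proof is correct: it is the standard Fej\'er-kernel approximate-identity argument, using the nonnegative kernel $K_N(s)=\frac{1}{N+1}\bigl|\sum_{j=0}^{N}e^{ijs}\bigr|^2$ with unit mass and $\sup_{\delta\leq|s|\leq\pi}K_N(s)\to 0$. The paper does not prove this lemma but cites it as a classical fact (Katznelson, Chapter I), whose proof is this same argument, so your route matches; the only slip is cosmetic, since you announce three kernel properties and then list four.
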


\begin{lemma}[Fej\'er--Riesz theorem] \label{th:fejer-riesz}
Let $T \colon [-\pi, \pi] \rightarrow \R$ be given by 
\begin{equation}\label{eq:fejer-riesz-trig-poly}
T(\theta)=\sum_{k=-m}^{m}t_ke^{ik\theta},
\qquad
t_{-k}=\overline{t_k}. 
\end{equation}
If $T(\theta) > 0$ for every $\theta \in[-\pi,\pi]$, then there exists a polynomial
\begin{equation}\label{eq:fejer-riesz-factor}
\varphi(z)=\sum_{j=0}^{m}c_jz^j, \qquad c_j \in \mathbb{C}, 
\end{equation}
such that $T(\theta)=|\varphi(e^{i\theta})|^2$ for  $\theta\in[-\pi,\pi]$. 
\end{lemma}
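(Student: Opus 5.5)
The Fejér--Riesz statement is a classical result, and I would prove it by factoring the associated Laurent polynomial through its roots. Multiplying $T$ by $z^m$ turns it into an ordinary polynomial, whose roots come in reflection-symmetric pairs. Choosing one root from each pair produces the factor $\varphi$.

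\textbf{Step 1 (the polynomial and its roots).} Set $Q(z)=z^m\sum_{k=-m}^m t_k z^k$, a polynomial of degree at most $2m$; for $\theta$ real, $Q(e^{i\theta})=e^{im\theta}T(\theta)$. If $t_m=0$ then $t_{-m}=0$ as well, so we may reduce $m$; hence assume $t_m\neq 0$. Then $Q$ has degree exactly $2m$ and $Q(0)=t_{-m}=\overline{t_m}\neq0$. Define the reflected polynomial $Q^*(z)=z^{2m}\overline{Q(1/\bar z)}$. Using $t_{-k}=\overline{t_k}$, one checks that $Q^*=Q$. Consequently, if $\zeta$ is a root of $Q$ of multiplicity $\mu$, then so is $1/\bar\zeta$. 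Since $T>0$, $Q$ has no roots on the unit circle. The $2m$ roots therefore split into $m$ roots $\zeta_1,\dots,\zeta_m$ (with multiplicity) inside the unit disk and their reflections $1/\bar\zeta_j$ outside it.

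\textbf{Step 2 (factorization and the key modulus identity).} Write
\[
Q(z)=t_m\prod_{j=1}^m (z-\zeta_j)(z-1/\bar\zeta_j).
\]
For $|z|=1$ we have $|z-1/\bar\zeta|=|\bar\zeta z-1|/|\zeta|=|z-\zeta|/|\zeta|$, because $|\bar\zeta z-1|=|\overline{\bar\zeta z-1}|\cdot 1=|\zeta\bar z-1|\,|z|=|\zeta-z|$. Hence
\[
T(\theta)=|Q(e^{i\theta})|=|t_m|\prod_j|\zeta_j|^{-1}\,\prod_j|e^{i\theta}-\zeta_j|^2 .
\]
Here we used $T>0$, so that $T=|Q|$ on the circle.

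\textbf{Step 3 (defining $\varphi$).} Set $C=|t_m|\prod_j|\zeta_j|^{-1}>0$ and
\[
\varphi(z)=\sqrt{C}\prod_{j=1}^m(z-\zeta_j),
\]
a polynomial of degree $m$. Step 2 then gives $T(\theta)=|\varphi(e^{i\theta})|^2$ for all $\theta$.

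\textbf{Main obstacle.} The delicate point is Step 1: verifying the symmetry $Q^*=Q$ and that multiplicities pair correctly. This follows from $\overline{Q(1/\bar z)}=\sum_k \overline{t_k} z^{-k-m}=\sum_k t_{-k}z^{-k-m}$, followed by reindexing. Nonvanishing of $Q$ on the circle (from strict positivity of $T$) guarantees that no root is its own reflection. The degenerate case $t_m=0$ is handled by the reduction in Step 1. Alternatively, one can keep the statement's bound $\deg\varphi\le m$ by padding $\varphi$ with zero coefficients.
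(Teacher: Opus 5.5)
Your proof is correct and complete. It is the standard root-pairing argument: $Q(z)=z^m\sum_k t_k z^k$ is self-reciprocal, and $Q(0)=\overline{t_m}\neq 0$. Strict positivity rules out roots on the unit circle, so the $2m$ roots split into $\zeta_j$ inside the disk and $1/\bar\zeta_j$ outside. The identity $|e^{i\theta}-1/\bar\zeta|=|e^{i\theta}-\zeta|/|\zeta|$ then gives the factorization.

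The paper does not prove this lemma. It invokes the classical Fej\'er--Riesz theorem with a citation to Georgiou and Lindquist, so your argument is a self-contained proof of something the paper takes from the literature.

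Two comparisons are worth noting. First, you use strict positivity essentially. The cited theorem holds for $T\ge 0$, where one must also show that roots on the unit circle have even multiplicity. Your version is nevertheless enough for the paper, because the lemma is only applied to the polynomials $T_m\ge \tfrac14 r_{m-1}>0$ in the proof of Proposition~\ref{lem:autocorrelation-decomposition}.

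Second, your explicit construction gives slightly more than the statement. When $T$ is even, the coefficients $t_k=t_{-k}=\overline{t_k}$ are real, so $Q$ has real coefficients. The set of roots inside the disk is then closed under conjugation, and $\varphi=\sqrt{C}\prod_j(z-\zeta_j)$ has real coefficients. Every $T_m$ in that proof is even and strictly positive, so the imaginary parts $y_{m,j}$ there can be taken to be zero. The paper's splitting of each factor into two real autocorrelation blocks is therefore harmless but unnecessary.
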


\begin{lemma}[Fej\'er's example] \label{lem:fejer-example}
    Let $g \colon [-\pi, \pi] \rightarrow \R$ be defined by 
    \begin{equation} 
        g(\theta) =\sum_{j=1}^{\infty} \frac{1}{j^2}
            \sin\left(\frac{2^{j^3}+1}{2}|\theta|\right). 
    \end{equation}
    Define $\Fcoef{g}{k}= (2\pi)^{-1} \int_{-\pi}^{\pi}g(\theta)e^{-ik\theta}\,d\theta$, and 
    \begin{equation} 
    \Fsum{N}{g}=\sum_{|k|\leq N}\Fcoef{g}{k},
        \qquad
    \Fmean{N}{g}
    =\sum_{|k|\leq N}\left(1-\frac{|k|}{N+1}\right)\Fcoef{g}{k}.
    \end{equation}
    Then, $g$ is continuous with $g(0) = 0$,  $\Fsum{N}{g}\geq0 $ for every $N \geq 0$, and
    \begin{equation}
        \limsup_{N\to\infty}\Fsum{N}{g}=\infty, \qquad 
        \liminf_{N\to\infty}\Fsum{N}{g}=0, \qquad 
        \lim_{N \to \infty} \Fmean{N}{g} = 0. 
    \end{equation}
\end{lemma}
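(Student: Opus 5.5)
This is Fejér's classical example; the plan follows the standard argument in Zygmund and Katznelson. Write $g=\sum_{j\ge1} j^{-2} g_j$ with $g_j(\theta)=\sin(\lambda_j|\theta|)$, where $\lambda_j=(2^{j^3}+1)/2$ is a half-integer, say $\lambda_j=n_j+\tfrac12$ with $n_j=2^{j^3-1}$.

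\textbf{Continuity and the Fejér means.} Each $g_j$ is continuous, $2\pi$-periodic (since $\sin(\lambda_j\pi)=\sin(\lambda_j(-\pi))$), even, bounded by $1$, and vanishes at $0$. The series converges uniformly because $\sum j^{-2}<\infty$. Hence $g$ is continuous and even, with $g(0)=0$, and Fejér's theorem (Lemma~\ref{lem:fejer-theorem}) gives $\Fmean{N}{g}\to g(0)=0$.

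\textbf{Fourier coefficients of one block.} Since $g_j$ is even, the symmetric partial sum at $0$ is
\[
\Fsum{N}{g_j}=\frac{1}{\pi}\int_0^\pi g_j(\theta)\,\frac{\sin((N+\tfrac12)\theta)}{\sin(\theta/2)}\,d\theta,
\]
the Dirichlet-kernel integral. The key computation is $\widehat{g_j}(k)$. Using
\[
\frac{1}{\pi}\int_0^\pi \sin(\lambda\theta)\cos(k\theta)\,d\theta=\frac{1}{2\pi}\Bigl[\frac{1}{\lambda+k}+\frac{1}{\lambda-k}\Bigr]
\]
for half-integer $\lambda$ (the boundary terms combine because $\cos((\lambda\pm k)\pi)=0$), we get $\widehat{g_j}(k)=\frac{1}{2\pi}\bigl(\frac{1}{\lambda_j-k}+\frac{1}{\lambda_j+k}\bigr)$.

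Summing over $|k|\le N$ then gives a telescoping-type expression. For $N< \lambda_j$,
\[
\Fsum{N}{g_j}=\frac{1}{\pi}\sum_{k=-N}^{N}\frac{1}{\lambda_j-k},
\]
a positive sum of terms $1/(\text{half-integers between }\lambda_j-N\text{ and }\lambda_j+N)$. At $N=n_j$ this is $\approx\frac1\pi\log(4n_j)\gtrsim j^3$. For $N\ge\lambda_j$, the positive and negative terms pair up, and the sum equals $\frac1\pi\sum_{|m|\le N}\frac{1}{\lambda_j+m}$ over a window symmetric about $-\lambda_j$ shifted. This is nonnegative and bounded by $\frac{1}{\pi}\cdot O\bigl(\log\frac{N+\lambda_j}{N-\lambda_j}\bigr)$, which tends to $0$ as $N\to\infty$ and is uniformly bounded by $O(1)$ plus $\log$ terms. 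In all cases $0\le \Fsum{N}{g_j}\le C(1+\log \lambda_j)$, i.e.\ $O(j^3)$, and $\Fsum{N}{g_j}\le C\lambda_j/N$ for $N\ge 2\lambda_j$.

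\textbf{Assembling.} Since $\Fsum{N}{\cdot}$ is a finite linear functional of the Fourier coefficients and the series converges uniformly (hence in $L^1$), we have $\Fsum{N}{g}=\sum_j j^{-2}\Fsum{N}{g_j}$, a sum of nonnegative terms; thus $\Fsum{N}{g}\ge0$.

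For the limsup, take $N=n_j$. The $j$-th term contributes $\ge c\, j^{-2}\cdot j^3=cj\to\infty$, and the other terms are nonnegative, so $\limsup_N \Fsum{N}{g}=\infty$.

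For the liminf, take $N$ with $\lambda_{j}\ll N\ll \lambda_{j+1}$, e.g.\ $N=\lambda_j^2$ (valid since $\lambda_{j+1}\ge\lambda_j^{3}$ eventually). Blocks $i\le j$ contribute $\sum_i i^{-2}C\lambda_i/N\le C'\lambda_j/N\to0$. Blocks $i>j$ have $N\ll\lambda_i$, so their sums are $\frac1\pi\sum_{|k|\le N}(\lambda_i-k)^{-1}\approx \frac{2N}{\pi\lambda_i}$, giving a total $\lesssim N/\lambda_{j+1}\to0$. Combined with nonnegativity, $\liminf=0$.

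\textbf{Main obstacle.} The delicate step is the precise two-sided bound on $\Fsum{N}{g_j}$: both its nonnegativity for all $N$ and its smallness away from $n_j$. I would handle it through the explicit partial-fraction sum above, writing it as $\frac1\pi\sum$ of $1/(\lambda_j-k)$ over an interval and comparing with $\log$ via integrals.
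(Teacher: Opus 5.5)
\textbf{The $\liminf$ step fails as written.} The rest of your argument is correct. That covers continuity, the explicit coefficients $\widehat{g_j}(k)=\frac{1}{2\pi}\bigl(\frac{1}{\lambda_j-k}+\frac{1}{\lambda_j+k}\bigr)$, nonnegativity of $\Fsum{N}{g}$, and $\limsup=\infty$. It is also more self-contained than the paper, which simply cites Duren for nonnegativity and the $\limsup$. Your description of the case $N\ge\lambda_j$ is garbled; it is cleaner to note that the cancellation gives, for every $N$, $\Fsum{N}{g_j}=\frac1\pi\sum_{m=|N-n_j|}^{N+n_j}(m+\tfrac12)^{-1}$, from which all your one-block bounds follow at once.

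The problem is your claim that $\lambda_{j+1}\ge\lambda_j^3$ eventually. This is false, and in fact backwards. Since $\log_2\lambda_j\approx j^3-1$, we have $\log\lambda_{j+1}/\log\lambda_j\to1$, so no fixed power of $\lambda_j$ stays below $\lambda_{j+1}$. Concretely, $\lambda_3^3\approx2^{78}>\lambda_4\approx2^{63}$. Already $\lambda_4^2\approx2^{126}>\lambda_5\approx2^{124}$, and $\lambda_j^2>\lambda_{j+1}$ for every $j\ge4$. So with $N=\lambda_j^2$, block $j+1$ (and several after it) is not in the regime $N\ll\lambda_i$. Your tail bound $N/\lambda_{j+1}$ actually tends to $\infty$, so the argument does not show $\Fsum{N}{g}\to0$ along your subsequence.

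What is true is $\lambda_{j+1}/\lambda_j\approx2^{3j^2+3j+1}\to\infty$. Taking $N_j=\lfloor\sqrt{\lambda_j\lambda_{j+1}}\rfloor$ repairs the argument. Then $N_j\ge2\lambda_i$ for $i\le j$ and $N_j\le\lambda_i/2$ for $i>j$. Both $\lambda_j/N_j$ and $N_j/\lambda_{j+1}$ are of order $(\lambda_j/\lambda_{j+1})^{1/2}\to0$, so your two block estimates go through.

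Simpler still, and this is how the paper concludes: you have already proved $\Fmean{N}{g}\to0$. Since $\Fmean{N}{g}=\frac{1}{N+1}\sum_{n=0}^{N}\Fsum{n}{g}$ is a Ces\`aro mean, its tending to $0$ forces $\liminf_N\Fsum{N}{g}\le0$. Nonnegativity gives the reverse inequality, so no further block estimates are needed.
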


\begin{proof}    
This is Fej\'er's classical example of a continuous function whose Fourier series diverges at $0$~\citep[Chapter 8.8]{duren2012invitation}. 
The series defining $g$ converges uniformly since
\begin{equation}
   | g(\theta) | \leq \sum_{j=1}^\infty \frac{1}{j^2} = \frac{\pi^2}{6}.  
\end{equation}
It is shown in~\citet[Chapter 8.8]{duren2012invitation} that $\limsup_{N\to\infty}\Fsum{N}{g}=\infty$ and 
\begin{equation}
    \Fsum{N}{g} \geq 0, \text{ for every } N \geq 1. 
\end{equation}
On the other hand, Fej\'er's theorem and $g(0)=0$ imply $\Fmean{N}{g} \rightarrow 0$.    Together with the nonnegativity of $\Fsum{N}{g}$, this forces $\liminf_{N\to\infty}\Fsum{N}{g}=0.$ 
\end{proof}

\section{Appendix: Proof of Theorem~\ref{thm:tight-variance-converse}}

\subsection{Main Proof}
The proof of Theorem~\ref{thm:tight-variance-converse} consists of three steps. The first step is to reduce the problem to the analysis of  $\sum_k g(Y_k)$. 
In Proposition~\ref{prop:skeleton-reduction}, we show that tightness of $(S_n/\sqrt{n})_{n \geq 1}$ transfers to $(T_n/\sqrt{n})_{n \geq 1}$,
and in the converse direction, to prove $\sigma^2 < \infty$, it suffices to control $\sup_n n^{-1}\Var(T_n)$. 
Note that Proposition~\ref{prop:skeleton-reduction} is independent of the minorization condition and holds for any integer $m \geq 1$. 

\begin{proposition}\label{prop:skeleton-reduction}
If $( S_n / \sqrt{n} )_{n \ge 1}$ is bounded in probability, then  $( T_n / \sqrt{n} )_{n \ge 1}$ is also bounded in probability. 
Moreover, if 
\begin{equation} 
   \limsup_{n \rightarrow \infty} \frac{1}{n}\Var\left( T_n \right) \leq M < \infty, 
\end{equation}  
then  $\sigma^2 = C \leq M < \infty$.  
\end{proposition}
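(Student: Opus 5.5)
For the first claim, I would compare $T_n$ with the skeleton-aligned partial sum $S_{nm}$. Write
\[
S_{nm}=\sum_{k=0}^{n-1}\sum_{j=0}^{m-1} h(X_{km+j}),
\]
and set $\xi_k=\sum_{j=0}^{m-1}h(X_{km+j}) - g(Y_k)$, where $g(Y_k)=\E[\sum_{j<m}h(X_{km+j})\mid X_{km}]$. Then $S_{nm}-T_n=\sum_{k<n}\xi_k$. The variables $\xi_k$ are not martingale differences for the skeleton filtration, since $\xi_k$ depends on $X_{km},\dots,X_{km+m-1}$ and $\E[\xi_k\mid \F_{km}]=0$. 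Still, $\xi_k$ is measurable with respect to $\F_{(k+1)m}$ (up to $X_{km+m-1}$) and has zero conditional mean given $\F_{km}$. So $(\xi_k)_k$ is a martingale difference sequence for the filtration $(\F_{(k+1)m-1})_k$, and it is square-integrable because $h\in L^2(\pi)$. By stationarity and orthogonality, $\E[(\sum_{k<n}\xi_k)^2]=n\E[\xi_0^2]$, so $(S_{nm}-T_n)/\sqrt n$ is bounded in $L^2$ and hence tight. Tightness of $S_{nm}/\sqrt{nm}$ follows from the hypothesis along the subsequence, so $T_n/\sqrt n$ is tight.

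For the second claim, assume $\limsup_n n^{-1}\Var(T_n)\le M$. Since $h\in L^2(\pi)$, $P$ is reversible and $C$ is defined through the spectral measure $\mathcal E_h$. I would compute $n^{-1}\Var(T_n)$ spectrally. First, $g=\sum_{j<m}P^jh=\psi(P)h$ with $\psi(\lambda)=\sum_{j<m}\lambda^j$, and the skeleton $P^m$ is reversible. Then
\[
\frac1n\Var(T_n)=\int \psi(\lambda)^2 F_n(\lambda^m)\,\mathcal E_h(d\lambda),
\]
where $F_n(u)=1+2\sum_{k=1}^{n-1}(1-k/n)u^k$. As $n\to\infty$, $F_n(u)\to(1+u)/(1-u)$ for $u\in(-1,1)$, and $F_n\ge0$ because it is a Fejér-type kernel value for real $u$ with $|u|\le 1$. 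Fatou's lemma then gives
\[
\int \psi(\lambda)^2\frac{1+\lambda^m}{1-\lambda^m}\,\mathcal E_h(d\lambda)\le M,
\]
provided $\mathcal E_h(\{\pm1\})=0$. That holds by Lemma~\ref{lem:direct-endpoints}, since $P$ is ergodic, which gives the pointwise total variation convergence the lemma needs.

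It then remains to show the pointwise inequality
\[
\frac{1+\lambda}{1-\lambda}\le \psi(\lambda)^2\frac{1+\lambda^m}{1-\lambda^m}, \qquad \lambda\in(-1,1),
\]
up to the normalization relating $T_n$ to $S_{nm}$ with $nm$ terms. I expect this normalization to be the main subtlety: $\sigma^2$ is per unit of original time while $T_n$ has $n$ skeleton steps, so I will check the constant carefully. Using $\psi(\lambda)=(1-\lambda^m)/(1-\lambda)$, the right side equals $(1-\lambda^m)(1+\lambda^m)/(1-\lambda)^2=(1-\lambda^{2m})/(1-\lambda)^2$, and the left side equals $(1-\lambda^2)/(1-\lambda)^2$. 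The inequality therefore reduces to $1-\lambda^2\le 1-\lambda^{2m}$, which holds for $|\lambda|<1$ and $m\ge 1$. This gives $C\le M$ directly, with no factor of $m$. Finally, $C<\infty$ gives the CLT with $\sigma^2=C$ by Theorem~\ref{thm:prelim}(i), and $A=B=C$ by Theorem~\ref{thm:prelim}(ii), which yields $\sigma^2=C\le M$.
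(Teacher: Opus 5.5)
Your proposal is correct and follows the paper's proof. Tightness comes from the same block martingale comparison $S_{nm}-T_n=\sum_{k<n}\xi_k$ with $L^2$-orthogonality and Chebyshev. The bound $C\le M$ rests on the same pointwise inequality $\psi(\lambda)^2\frac{1+\lambda^m}{1-\lambda^m}=\frac{1-\lambda^{2m}}{(1-\lambda)^2}\ge\frac{1+\lambda}{1-\lambda}$, together with $\mathcal{E}_h(\{\pm1\})=0$ from Lemma~\ref{lem:direct-endpoints}.

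The only difference is how you obtain the spectral limit. You write $n^{-1}\Var(T_n)=\int\psi(\lambda)^2F_n(\lambda^m)\,\mathcal{E}_h(d\lambda)$ directly and pass to the limit by Fatou. The needed nonnegativity holds because $F_n(u)=n^{-1}\sum_{j,k<n}u^{|j-k|}$ and the Toeplitz matrix $(u^{|j-k|})$ is positive semidefinite for $|u|\le1$. The paper instead cites Rosenthal (2003, Proposition 1) for the limit against the skeleton spectral measure $\mathcal{E}^m_g$, then transfers it to $\mathcal{E}_h$ by truncated functional calculus and monotone convergence. Your version is slightly more self-contained and needs only the $\limsup$.
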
 

The second step is to show that $\E[Z_1^2] < \infty$. Our proof uses~\citet[Theorem 3.4]{bednorz:etal:2008}. 

\begin{proposition}\label{lem:finite-Z1-square}
If $( T_n / \sqrt{n} )_{n \ge 1}$ is bounded in probability, then  $\E(Z_1^2) < \infty$.
\end{proposition}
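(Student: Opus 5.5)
The plan is to apply Lemma~\ref{lem:bednorz} with the i.i.d.\ cycle sums $(Z_i)_{i\geq1}$ and a random number of completed cycles $J_n$. Write $\tau_1$ for the first regeneration time and $R_k=\sum_{i=1}^k Z_i$. Let $J_n = \max\{i\geq 0 : \tau_{i+1}\leq n\}$ be the number of complete cycles finished by time $n$, which is positive for large $n$ with high probability. Then
\[
T_n = \sum_{k=0}^{\tau_1-1} g(Y_k) + R_{J_n} + \sum_{k=\tau_{J_n+1}}^{n-1} g(Y_k),
\]
where the first term is the initial segment and the last is the incomplete final cycle. By the renewal theorem, $J_n/n\to\vartheta$ almost surely, where $\vartheta = 1/\E[\tau_2-\tau_1] = \beta\pi(\C)\in(0,\infty)$. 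The mean cycle length is finite by Kac's formula (Lemma~\ref{lem:direct-kac}) applied to the split chain and the atom. Hence, once the two boundary terms are $o_P(\sqrt n)$, tightness of $(T_n/\sqrt n)$ transfers to $(R_{J_n}/\sqrt n)$, and Lemma~\ref{lem:bednorz} yields $\E Z_1=0$ and $\E Z_1^2<\infty$.

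The initial segment is a fixed almost surely finite random variable, so divided by $\sqrt n$ it tends to $0$. The main obstacle is the terminal incomplete cycle, since no moment control on $g$ over a cycle is yet available.

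First, I would bound it by the maximal absolute excursion
\[
W_i=\sum_{k=\tau_i}^{\tau_{i+1}-1}|g(Y_k)|.
\]
The $W_i$ are i.i.d.\ with $\E W_1 = \pi(|g|)/\vartheta$ by Kac's formula, and this is finite because $g\in L^2(\pi)\subset L^1(\pi)$. Then the terminal term is at most $W_{J_n+1}$, and $\max_{i\leq n+1} W_i/\sqrt n\to0$ in probability would suffice. That requires only $\E W_1^2<\infty$, which we do not have. With merely $\E W_1<\infty$ we get $\max_{i\le n}W_i=o(n)$, not $o(\sqrt n)$, so this route is insufficient.

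Instead, I would avoid the boundary issue by a symmetrization trick that uses tightness itself. Both $T_{\tau_{J_n+1}}$ and $T_n$ are sums over stopping-type indices, and the difference $T_n - T_{\tau_{J_n+1}}$ is again controlled by tightness, provided $\tau_{J_n+1}$ lies within $o(n)$ of $n$. Concretely, $n-\tau_{J_n+1}$ is the age of the renewal process and is $O_P(1)$ in the stationary regime, so its ratio to $n$ is negligible. The delicate point is that tightness of $T_m/\sqrt m$ for a fixed $m$ does not directly give tightness at the random time $m=\tau_{J_n+1}$. I would address this through the stationarity of the split chain. The sequence $(g(Y_k))$ is stationary, so for each fixed $a$,
\[
T_{n}-T_{n-a}\stackrel{d}{=}T_a,
\]
and $T_a$ is a fixed finite random variable. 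Since the age $A_n=n-\tau_{J_n+1}$ is tight, for every $\epsilon>0$ there is an $a_0$ with $\Prob(A_n>a_0)<\epsilon$ for all $n$. On $\{A_n\le a_0\}$, the terminal term is bounded by
\[
\max_{a\le a_0}|T_n-T_{n-a}|\le \sum_{k=n-a_0}^{n-1}|g(Y_k)|.
\]
This last quantity has a fixed distribution by stationarity, so divided by $\sqrt n$ it tends to $0$ in probability. This handles the terminal term cleanly and is the step I would verify first, in particular the tightness of the age in the stationary split chain. If that tightness failed, I would fall back on a delayed-renewal coupling. With the terminal term under control, Lemma~\ref{lem:bednorz} closes the argument.
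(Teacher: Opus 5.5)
Your proposal is correct and follows essentially the same route as the paper. Both split $T_n$ into the initial segment, $R_{J_n}$ (with $J_n$ replaced by $J_n\vee 1$ to make it positive) and the terminal incomplete cycle, control the terminal cycle by tightness of the age together with stationarity of $(g(Y_k))$, and finish with Lemma~\ref{lem:bednorz}. The age tightness you flag is settled in one line: the age exceeds $M$ only if $I_{n-M}=\cdots=I_{n-1}=0$, so stationarity of the split chain bounds its tail by $\Prob(I_0=\cdots=I_{M-1}=0)$ uniformly in $n\ge M$, and this tends to $0$ as $M\to\infty$; no coupling fallback is needed.
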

 
To convert $\E[Z_1^2] < \infty$ into a linear variance bound for $T_n$, we need to convert the cycle clock to one-step clock and handle the last incomplete cycle at a deterministic time $n$. In particular, the sum accumulated in the incomplete cycle is not necessarily square-integrable. The forward-backward martingale decomposition developed by~\citet{Wu1999} enables us to overcome this difficulty. 
Let $L$ be the residual kernel defined by~\eqref{eq:def-residual-kernel}, and let $\tau = \tau_1$ denote the first regeneration time. 
Define  
\begin{equation}\label{eq:def-potential-u}
    u(x) =   \sum_{j=0}^{\infty} (L^j g)(x)
    =    \E_x \left[  \sum_{j=0}^{\tau - 1} g(Y_j) \right].  
\end{equation} 
The standard theory of~\citet{Nummelin1991} shows that $u$ is well-defined $\pi$-almost everywhere and satisfies the Poisson equations. For completeness, we give a proof of this fact in the following lemma. 
An important point is that $u$ is not necessarily in $L^2(\pi)$. 

\begin{lemma}\label{lem:Poisson}
The function $u$ is well-defined and finite $\pi$-almost everywhere. Further,  
 \begin{equation} 
     g = u - L u = u - P^m u, \qquad \pi\text{-almost everywhere},  
 \end{equation}
 and $\pi_{\C} (u ) = 0$. 
\end{lemma}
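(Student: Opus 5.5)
We need to prove Lemma~\ref{lem:Poisson}: $u$ is finite $\pi$-a.e., $g = u - Lu = u - P^m u$, and $\pi_{\C}(u)=0$.

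The plan is to work with the split skeleton chain and use regeneration. First I would settle finiteness by absolute convergence. Since $g\in L^2(\pi)\subset L^1(\pi)$, define $\bar u(x)=\E_x[\sum_{j=0}^{\tau-1}|g(Y_j)|]=\sum_{j\ge0}(L^j|g|)(x)$. The identity $\E_x[F(Y_j)\one_{\{\tau>j\}}]=(L^jF)(x)$ for $F\ge0$ follows by induction from the split-chain transition probabilities: on $\{\tau>j\}$ no regeneration has occurred, so each step uses the residual kernel $L$.

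Next I would apply the Kac occupation formula (Lemma~\ref{lem:direct-kac}) to the split chain, taking the atom to be the regeneration event. Equivalently, regeneration times have i.i.d.\ cycles, and $Y_{\tau}\sim\pi_{\C}$. This gives
\begin{equation*}
\pi(|g|)=\beta\pi(\C)\,\E_{\pi_{\C}}\Bigl[\sum_{j=0}^{\tau-1}|g(Y_j)|\Bigr],
\end{equation*}
so $\pi_{\C}(\bar u)=\pi(|g|)/(\beta\pi(\C))<\infty$. Here $\beta\pi(\C)$ is the regeneration rate, and $1/(\beta\pi(\C))$ is the mean cycle length.

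To pass from $\pi_{\C}$-integrability to finiteness $\pi$-a.e., I would use a one-step identity. For each $x$,
\begin{equation*}
\E_x\Bigl[\sum_{j=0}^{\tau'-1}|g(Y_j)|\Bigr]=\bar u(x)+\E_x[\bar u(Y_{\tau})],
\end{equation*}
where $\tau'$ is the second regeneration time. By the strong Markov property the second term equals the constant $\pi_{\C}(\bar u)$. It therefore suffices to show that the sum up to the first regeneration is finite a.s.\ for $\pi$-a.e.\ start. That holds because, by Harris recurrence on the absorbing set $G$, $\tau<\infty$ a.s.\ for $\pi$-a.e.\ $x$, and each $g(Y_j)$ is finite. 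However, a.s.\ finiteness does not give finite expectation, so this only controls the sum pathwise.

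The cleaner route to finite expectation is to integrate $\bar u$ against $\pi$ directly and use that $\pi L^j(\cdot)$ equals $\pi(\cdot)$ times the tail probability $\Prob_\pi(\tau>j)$. This gives $\pi(\bar u)=\sum_j \E_\pi[|g(Y_j)|\one_{\tau>j}]$, which may be infinite, because the cycle length need not have a second moment. For that reason I would avoid that route. Instead I would show $\{\bar u<\infty\}$ is absorbing for $L$ and for $P^m$. From $\bar u=|g|+L\bar u$, whenever $\bar u(x)<\infty$ we get $L\bar u(x)<\infty$. Combined with $\pi_{\C}(\bar u)<\infty$, this gives $P^m\bar u(x)=L\bar u(x)+\beta\one_{\C}(x)\pi_{\C}(\bar u)<\infty$. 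So the set is absorbing for $P^m$, and it is nonempty with positive $\pi$-mass because $\pi_{\C}(\bar u)<\infty$. By $\pi$-irreducibility of $P^m$ (the skeleton of an aperiodic chain), an absorbing set of positive measure has full $\pi$-measure. I expect this to be the main obstacle, and I would cite the Meyn--Tweedie result on absorbing sets.

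With absolute convergence in hand, the identities are algebra. Summing the series gives $u=g+\sum_{j\ge1}L^jg=g+Lu$, with the interchange justified by domination by $\bar u$. Then $P^mu=Lu+\beta\one_{\C}\pi_{\C}(u)$. Finally, the Kac formula for signed $g\in L^1$ gives $\pi_{\C}(u)=\pi(g)/(\beta\pi(\C))$. Since $\pi(g)=m\,\pi(h)=0$ by invariance, we get $\pi_{\C}(u)=0$, hence $P^mu=Lu$ and $g=u-Lu=u-P^mu$ $\pi$-a.e.
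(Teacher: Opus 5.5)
Your proof is correct. It matches the paper everywhere except at the step you flagged as the main obstacle: passing from $\pi_{\C}(\bar u)<\infty$ to $\pi(\{\bar u=\infty\})=0$.

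\textbf{The paper's route.} It stays inside the regeneration/Kac framework. From $\bar u=|g|+L\bar u$ it gets $L^j\bar u\le\bar u$ for every $j$, so $\pi_{\C}L^j(\bar u)<\infty$ and $(\pi_{\C}L^j)(\{\bar u=\infty\})=0$. It then applies Kac's formula to the indicator $\one_{\{\bar u=\infty\}}$:
\[
\pi(\{\bar u=\infty\})=\beta\pi(\C)\sum_{j\ge0}(\pi_{\C}L^j)(\{\bar u=\infty\})=0 .
\]

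\textbf{Your route.} You use the same identity to show that $\{\bar u<\infty\}$ is absorbing for $P^m=L+\beta\one_{\C}\pi_{\C}$ and has positive $\pi$-mass. You then invoke two structural facts: the $m$-skeleton of an aperiodic $\pi$-irreducible chain is $\pi$-irreducible (Meyn--Tweedie Prop.~5.4.5), and a nonempty absorbing set of an irreducible chain is full (Prop.~4.2.3).

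\textbf{What each buys.} The paper's argument is more self-contained: it needs nothing beyond the occupation formula already used to get $\pi_{\C}(\bar u)<\infty$. Yours relies on aperiodicity through skeleton irreducibility. In return, it gives a slightly stronger conclusion: $u$ is finite on a $P^m$-absorbing set, not merely off a $\pi$-null set.

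\textbf{Remaining steps.} The Poisson equation via dominated interchange, $\pi_{\C}(u)=\pi(g)/(\beta\pi(\C))=0$ via the signed Kac formula, and hence $P^mu=Lu$, all coincide with the paper.

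\textbf{Side remark.} In the route you discard, the claim $\pi L^j(\cdot)=\pi(\cdot)\,\Prob_\pi(\tau>j)$ is false in general. Already for $j=1$ one has $\pi L=\pi-\beta\pi(\C)\pi_{\C}$, which differs from $(1-\beta\pi(\C))\pi$ unless $\pi_{\C}=\pi$. Since you do not use this claim, it does not affect your proof.
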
 

The forward and backward martingale differences are defined by 
\begin{equation}\label{eq:def-for-back-mart-diff}
\xi_j = u(Y_j) - P^m u(Y_{j-1}), \qquad 
\xi_j^* = u(Y_{j-1}) - P^m u(Y_j). 
\end{equation}
The final key result we need is the identity $\E[\xi_1^2 ] = \beta \pi(\C) \E[Z_1^2]$, which says that the stationary quadratic variation per step equals the cycle-sum variance times the regeneration rate $\beta \pi(\C)$. 
This identity is well known; see, for example,~\citet[Equations (17.13) and (17.44)]{meyn:tweedie:2009}. However, usual formulations require $u \in L^2(\pi)$ or $\E[\tau^2] < \infty$, neither of which necessarily holds in our setting. Hence, we formally establish it in Proposition~\ref{lem:key-energy-identity}, which only  requires $\E [Z_1^2] < \infty$. 

\begin{proposition}\label{lem:key-energy-identity}
If $\E[ Z_1^2 ] < \infty$, then
\begin{equation}
  \E[\xi_1^2 ] =\beta \pi(\C) \E[Z_1^2]. 
\end{equation}
\end{proposition}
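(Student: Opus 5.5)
The plan is to compare $\xi_1$, which uses the stationary one-step clock, with a martingale decomposition of a single cycle sum $Z_1$, and then pass between the two clocks with the Kac occupation formula (Lemma~\ref{lem:direct-kac}) applied to the split chain. The first step is to simplify $\xi_1$. By Lemma~\ref{lem:Poisson} we have $\pi_\C(u)=0$, so $P^m u = Lu + \beta\one_\C\,\pi_\C(u) = Lu$ holds $\pi$-a.e., and therefore $\xi_1 = u(Y_1) - Lu(Y_0)$. We then introduce, for every $k\ge 0$,
\[
\eta_{k+1} = u(Y_{k+1})\one_{\{I_k=0\}} - (Lu)(Y_k),
\]
which gives the split $\xi_1 = \eta_1 + u(Y_1)\one_{\{I_0=1\}}$. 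Given $Y_0$, on $\{I_0=1\}$ the variable $Y_1\sim\pi_\C$ is independent of $Y_0$, and $\pi_\C(u)=0$. Hence the cross term $\E[\eta_1 u(Y_1)\one_{\{I_0=1\}}] = -\E[(Lu)(Y_0)\one_{\{I_0=1\}}\,u(Y_1)]$ vanishes. The same regeneration structure gives
\[
\E[\xi_1^2]=\E[\eta_1^2]+\beta\pi(\C)\,\pi_\C(u^2).
\]

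The second step is the cycle decomposition. Let $\G_k=\sigma(Y_0,\dots,Y_k,I_0,\dots,I_{k-1})$. For $\tau_1\le k\le\tau_2$, set
\[
M_k=\sum_{j=\tau_1}^{k-1} g(Y_j)+u(Y_k)\one_{\{k<\tau_2\}}.
\]
Using $g=u-Lu$ from Lemma~\ref{lem:Poisson}, the increment $M_{k+1}-M_k$ equals $\eta_{k+1}$. This increment has conditional mean zero given $\G_k$, because $\E[u(Y_{k+1})\one_{\{I_k=0\}}\mid \G_k]=(Lu)(Y_k)$. Since $M_{\tau_2}=Z_1$, we obtain
\[
Z_1=u(Y_{\tau_1})+\sum_{k=\tau_1}^{\tau_2-1}\eta_{k+1}.
\]
Here $u(Y_{\tau_1})$ has law $u_\#\pi_\C$ and is orthogonal to the martingale part. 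Formally this should give
\[
\E[Z_1^2]=\pi_\C(u^2)+\E\Bigl[\sum_{k=\tau_1}^{\tau_2-1}\eta_{k+1}^2\Bigr].
\]

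The third step converts the cycle clock to the one-step clock. The pair chain $(Y_k,I_k)$ is stationary and regenerates at the times $\tau_i$, with regeneration rate $\beta\pi(\C)$. Applying Lemma~\ref{lem:direct-kac} to the nonnegative function $\eta^2$, which depends on consecutive states of this chain, should give
\[
\E[\eta_1^2]=\beta\pi(\C)\,\E\Bigl[\sum_{k=\tau_1}^{\tau_2-1}\eta_{k+1}^2\Bigr].
\]
Substituting this into the identity for $\E[\xi_1^2]$ from the first step and using the cycle identity from the second step yields $\E[\xi_1^2]=\beta\pi(\C)\E[Z_1^2]$.

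The main obstacle is justifying the orthogonality in the cycle identity when neither $u\in L^2(\pi)$ nor $\E[\tau^2]<\infty$ is available. I would first observe that $u(Y_k)\one_{\{k<\tau_2\}}=\E[\sum_{j=k}^{\tau_2-1}g(Y_j)\mid \G_k]$ on $\{k\ge \tau_1\}$. Then $M_k=\E[Z_1\mid \G_k]$ on $\{\tau_1\le k\le \tau_2\}$, so the stopped process is a closed martingale with terminal value $Z_1\in L^2$. Doob's $L^2$ theory then gives orthogonality of the increments and
\[
\E[M_{\tau_2\wedge n}^2]\uparrow\E[Z_1^2],
\]
where the sum of squared increments converges by monotone convergence. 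This argument also yields $\pi_\C(u^2)\le\E[Z_1^2]<\infty$ by conditional Jensen, so every term in the identities above is finite and no $\infty-\infty$ arises.
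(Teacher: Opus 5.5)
Your proposal is correct and is essentially the paper's proof. Your $\eta_{k+1}$ is exactly the paper's increment $\Delta_{j+1}$, and your first step is the paper's split $\E[\xi_1^2]=\pi(a)+\beta\pi(\C)\pi_\C(u^2)$, since $\E[\eta_1^2\mid Y_0]=a(Y_0)$. The closed Doob martingale of the cycle sum followed by Kac's formula is also how the paper concludes; the only difference is cosmetic, in that the paper starts the split chain from $\pi_\C$ instead of using the first complete stationary cycle. Since Lemma~\ref{lem:direct-kac} is stated for functions of a single state, the cleanest way to make your third step rigorous is the paper's: use $\E[\eta_{k+1}^2\mid\G_k]=a(Y_k)$ together with $\{\tau_1\le k<\tau_2\}\in\G_k$ to replace $\eta_{k+1}^2$ by $a(Y_k)$ before invoking Kac.
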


\begin{proof}[Proof of Theorem~\ref{thm:tight-variance-converse}] 
The sequence $( \xi_j )_{j \ge 1}$ is a martingale-difference sequence since $(Y_k)_{k \geq 0}$ is a stationary Markov chain with transition kernel $P^m$ and $\pi(g) = 0$. 
By reversibility and the stationarity of $(Y_k)$,  $( \xi^*_j )_{j \ge 1}$ has the same properties with respect to the reverse filtration. 
The Poisson equation given in Lemma~\ref{lem:Poisson} therefore gives, almost surely,
\[
g(Y_{j-1}) + g(Y_j)
= u(Y_{j-1}) - P^m u(Y_{j-1}) + u(Y_j) - P^m u(Y_{j})
= \xi_j^* + \xi_j.
\]
This yields the decomposition of the sum
\begin{equation}\label{eq:forward-backward-Tn}
T_n = \sum_{j = 0}^{n - 1} g(Y_j)
= \frac{1}{2} \sum_{j=1}^n(\xi_j+\xi_j^*)
+ \frac{1}{2} \{g(Y_0)-g(Y_n)\}.
\end{equation}
Compared to an ordinary forward Poisson decomposition 
\begin{equation}\label{eq:forward-Tn}
    T_n = \sum_{j=1}^n \xi_j + u(Y_0) - u(Y_n), 
\end{equation}
the endpoint terms in~\eqref{eq:forward-backward-Tn} only involve $g$ instead of $u$.

Since $\xi_j, \xi_j^*$ are martingale-difference sequences, within each sequence the differences are $L^2(\pi)$-orthogonal to each other, and thus we obtain from~\eqref{eq:forward-backward-Tn} that 
\begin{align*}
     2 \sqrt{ \Var\left( T_n \right) } &\leq \norm{ \sum_{j=1}^n \xi_j  }_{L^2(\pi)} + \norm{  \sum_{j=1}^n \xi^*_j }_{L^2(\pi)} + \norm{  g(Y_0)  - g(Y_n) }_{L^2(\pi)}  \\ 
     & = \sqrt{n  \E\left[ \xi_j^2 \right] } + \sqrt{n  \E\left[ (\xi^*_j)^2 \right] } + \norm{  g(Y_0)  - g(Y_n) }_{L^2(\pi)}\\
     & \leq \sqrt{n  \E\left[ \xi_j^2 \right] } + \sqrt{n  \E\left[ (\xi^*_j)^2 \right] } + 2 \sqrt{\pi(g^2)}
\end{align*} 
By reversibility and since the process is stationary, 
\begin{equation}
    \E \left[  \xi_j^2 \right] = \E \left[  (\xi^*_j)^2 \right] = \int [ u(y) - P^m u(x)]^2 P^m (x, dy) \pi(dx). 
\end{equation} 
Propositions~\ref{lem:finite-Z1-square} and~\ref{lem:key-energy-identity} yield   $\E[\xi_1^2 ] = \beta \pi(\C) \E[Z_1^2] < \infty$. Hence, using $g \in L^2(\pi)$, we get 
\begin{equation}
    \limsup_{n \rightarrow \infty}  \frac{1}{n}\Var(T_n) 
    \leq  \limsup_{n \rightarrow \infty} \left( \sqrt{ \E[\xi_1^2 ] } + \frac{\sqrt{\pi(g^2)}}{\sqrt{n}}   \right)^2 
    \leq   \beta \pi(\C) \E[Z_1^2] < \infty. 
\end{equation} 
By Proposition~\ref{prop:skeleton-reduction}, this implies $\sigma^2 = C \leq  \beta \pi(\C) \E[Z_1^2] < \infty$. Theorem~\ref{thm:prelim} further yields  that $\sigma^2 = A = B = C$ and  $S_n/\sqrt{n} \Rightarrow \Normal (0, \sigma^2)$.   

To conclude the proof, it remains to show that $\sigma^2_g = \E[\xi_1^2 ]$. Since $\E[\xi_1^2 ] < \infty$, we can apply Lemma~\ref{lem:mart-CLT} to obtain that $n^{-1/2} \sum_{j=1}^n \xi_j$ converges weakly to the Gaussian limit with variance $\E[\xi_1^2]$.  
By Lemma~\ref{lem:Poisson}, $u$ is finite $\pi$-almost everywhere, and thus $n^{-1/2} [ u(Y_0) - u(Y_n)] \rightarrow 0$ in probability. 
So the forward decomposition~\eqref{eq:forward-Tn} shows that $T_n/\sqrt{n} \Rightarrow \Normal(0, \sigma^2_g)$ with $\sigma^2_g = \E[\xi_1^2]$.  
\end{proof}

\subsection{Proof of Proposition~\ref{prop:skeleton-reduction}}

\begin{proof}
Define a filtration of the stationary Markov process by $\mathcal{H}_k = \sigma\{ X_0, \ldots, X_{km} \}$.
Define 
\[
W_k = \sum_{j=0}^{m-1} h( X_{km + j} )
\]
so that 
\[
\sum_{k = 0}^{n-1} W_k 
= \sum_{k = 0}^{n-1} \sum_{j=0}^{m-1} h( X_{km + j} )
= \sum_{i = 0}^{nm-1} h(X_i)
= S_{nm}.
\]
By assumption and since $m$ is constant, $n^{-1/2} \sum_{k = 0}^{n-1} W_k$ is bounded in probability. 
Define 
\begin{equation}
    D_k = W_k - g(Y_k), \qquad M_n = \sum_{k = 0}^{n-1} D_k. 
\end{equation}   
Then $D_k$ is $\mathcal H_{k+1}$-measurable and $\E[D_k\mid\mathcal H_k]=0$, so $(M_n)_{n\geq0}$ is a martingale with respect to $(\mathcal H_n)_{n\geq0}$.
This gives a decomposition
\begin{align*}
\frac{1}{\sqrt{n}} T_n
= \frac{1}{\sqrt{n}} \sum_{k = 0}^{n - 1} g(Y_{k})
= \frac{1}{\sqrt{n}} S_{mn} - \frac{1}{\sqrt{n}} M_n.
\end{align*} 
Since $(S_n/\sqrt{n})_{n \geq 1}$ is bounded in probability, it remains to show that $(  M_n /\sqrt{n})_{n \geq 1} $ is bounded in probability. 
Since $h, g \in L^2(\pi)$, $D_k$ is also square-integrable with   $\E[ D_j D_k] = 0$ for $j \neq k$. 
Using the stationarity, we get $n^{-1} \E(M_n^2) =   \E(D_0^2) < \infty.$ Chebyshev's inequality then yields the tightness of $(  M_n /\sqrt{n})_{n \geq 1} $ and thus $(T_n/\sqrt n)_{n\geq1}$. 

Now assume that $\limsup_{n\rightarrow \infty}n^{-1}\Var(T_n)\leq M<\infty$.
Let $\mathcal{E}^m_g$ denote the spectral measure of $g$ with respect to $P^m$.
By~\citet[Proposition 1]{rosenthal2003asymptotic},  
\[
\frac{1}{n}\Var(T_n)
\to \int_{[-1, 1]} \frac{1 + \lambda}{1 - \lambda} \mathcal{E}^m_g(d\lambda), 
\]
with the convention $2/0 = \infty$. Hence, the limit on the right-hand side is bounded by $M$.  
Define $G(\lambda) = (1 + \lambda) / (1 - \lambda)$, and for $N \ge 0$, define $G_N = G \wedge N$.
Define $\varphi_m(\lambda) = \sum_{j = 0}^{m-1} \lambda^j$.
Since $G_N$ is bounded, we have by~\citet[Theorem 2.3]{conway:1990} that
\[
\int_{\X}  g(x)  (G_N(P^m) g)(x) \pi(d x)
= \int G_N(\lambda^m) \varphi_m(\lambda)^2 \mathcal{E}_h(d\lambda).
\]
Here we used that $\varphi_m$ is bounded on $[-1, 1]$.
Passing to the limit with monotone convergence gives
\[
\int_{[-1, 1]} \frac{1 + \lambda}{1 - \lambda} \mathcal{E}^m_g(d\lambda)
= \int_{[-1, 1]} \frac{1 + \lambda^m}{1 - \lambda^m} \varphi_m(\lambda)^2 \mathcal{E}_h(d\lambda).
\]
Now we have the lower bound for $-1 < \lambda < 1$ using $\varphi_m(\lambda) = \sum_{j = 0}^{m-1} \lambda^j = (1 - \lambda^m)/(1-\lambda)$ with
\begin{align*}
\frac{1 + \lambda^m}{1 - \lambda^m} \varphi_m(\lambda)^2 
&= \frac{1 + \lambda^m}{1 - \lambda^m} \left( \frac{1 - \lambda^m}{1-\lambda} \right)^2
\\
&= \frac{1 + \lambda}{1 - \lambda} 
\frac{1 - \lambda^{2m}}{1 - \lambda^2}
\\
&= \frac{1 + \lambda}{1 - \lambda} 
\sum_{i = 0}^{m-1} \lambda^{2i}
\\
&\ge \frac{1 + \lambda}{1 - \lambda}.
\end{align*}
By Lemma~\ref{lem:direct-endpoints}, $\mathcal{E}_h(\{1, -1\}) = 0$.  
Therefore, integration over the above lower bound with respect to $\mathcal{E}_h$ yields
\begin{align*}
   M \geq  \int_{[-1, 1]} \frac{1 + \lambda}{1 - \lambda} \mathcal{E}^m_g(d\lambda)
\geq \int_{[-1, 1]} \frac{1 + \lambda }{1 - \lambda }  \mathcal{E}_h(d\lambda) = C, 
\end{align*}
which completes the proof. 
\end{proof}

\subsection{Proof of Proposition~\ref{lem:finite-Z1-square}}

\begin{proof}
Let $\rho = \beta \pi(\C) > 0$. Let $\omega_n$ denote the number of regenerations by time $n$ and $\widetilde{\omega}_n$ the number of complete cycles by time $n$: 
\[ 
\omega_n = \sum_{k = 0}^{n-1} I_k, \qquad 
\widetilde{\omega}_n = (\omega_n - 1)_+.
\] 
Since $\E[ I_k \mid  Y_k] = \beta \one_{\C}(Y_k)$, by the martingale strong law of large numbers and the ergodic theorem, 
\begin{equation}
    \frac{\omega_n}{n} \longrightarrow \beta \pi(\C) = \rho, \text{ almost surely.}
\end{equation}
It also follows that $\widetilde{\omega}_n / n \to \rho $ almost surely. 

Decomposing the sum, for all sufficiently large $n$ with $\widetilde{\omega}_n \ge 1$,
\begin{equation}
T_n = \sum_{ k = 0 }^{ n - 1 } g(Y_k) = R^-+ \sum_{ i = 1 }^{\widetilde{\omega}_n} Z_i + R_n^+,
\end{equation}
where
\begin{align*}
 R^- = \sum_{ k = 0 }^{ \tau_1 - 1 } g(Y_k), \qquad 
 R_n^+ = \sum_{ k = \tau_{\widetilde{\omega}_n+1} }^{ n - 1 } g(Y_k).
\end{align*} 
Since $\omega_n / n \rightarrow \rho$ almost surely, $\Prob(\tau_1 < \infty) = 1$.
This implies $R^- / \sqrt{n} \to 0$ almost surely. 

Define the most recent successful regeneration time not exceeding $n$ by
\begin{equation}
\widehat \tau_n =\max\bigl(\{0\}\cup\{k+1\leq n:I_k=1\}\bigr).
\end{equation}
Define $E_n =n-\widehat \tau_n$ so that $E_n$ is the number of times after the most recent successful regeneration and before time $n$.  
If there have been no regenerations by time $n$, then $\widehat \tau_n=0$ and $E_n=n$.
On the set $\{\tau_1\leq n\}$, we have $\tau_{\widetilde{\omega}_n+1} = \widehat \tau_n$ and
\begin{equation}
R_n^+ = \sum_{k=\widehat \tau_n}^{n-1}g(Y_k)
=\sum_{j=1}^{E_n}g(Y_{n-j}).
\end{equation}
Then for any $\epsilon > 0$, a union probability bound gives, for each fixed $M < \infty$, 
\[
\Prob\left( \frac{1}{\sqrt{n}} |R_n^+| > \epsilon \right)
\le \Prob(\tau_1>n) + \Prob(E_n>M)
+ \Prob\left( \frac{1}{\sqrt{n}} \sum_{j=1}^M|g(Y_{n-j})| > \epsilon \right).
\]
Taking the limit with respect to $n \to \infty$ gives 
\[
\limsup_{n \to \infty} \Prob\left( \frac{1}{\sqrt{n}} |R_n^+| > \epsilon \right)
\le \limsup_{n \to \infty} \Prob(E_n>M).
\]
Now for $n \ge M$, the stationarity of the split chain implies 
\begin{align*}
\Prob( E_n > M )
&\le \Prob( I_{n-M}=I_{n-M+1}=\cdots=I_{n-1}=0 ) \\
&= \Prob( I_{0}=I_{1}=\cdots=I_{M-1}=0 ), 
\end{align*} 
where the right-hand side goes to zero as $M \rightarrow \infty$ since $\omega_n / n \to \rho$ almost surely. Hence, $|R_n^+| / \sqrt{n} \to 0$ in probability. Moreover, 
\begin{equation}
   \frac{1}{\sqrt{n}} \sum_{i = 1}^{\widetilde{\omega}_n} Z_i = \frac{1}{\sqrt{n}} \left[ T_n - R^- - R_n^+  \right] \quad \text{on } \{\tau_1 \leq n\}. 
\end{equation} 
Since $\Prob(\tau_1 > n) \to 0$, the left-hand side is bounded in probability.

To obtain a strictly positive clock, replace $\widetilde{\omega}_n$ by $\widetilde{\omega}_n \vee 1$; then $(\widetilde{\omega}_n \vee 1)/n \to \rho$ almost surely.
Since $\widetilde{\omega}_n/n \to \rho$ in probability,  $\Prob(\widetilde{\omega}_n = 0) \rightarrow 0$. 
It follows that $n^{-1/2} \sum_{i = 1}^{\widetilde{\omega}_n \vee 1} Z_i$ is bounded in probability.
Applying Lemma~\ref{lem:bednorz} completes the proof.
\end{proof}

\begin{remark}
A simpler proof of Proposition~\ref{lem:finite-Z1-square} is to first use~\citet[Theorem II.3.1]{Chen1999} to show that $T_n/\sqrt{n} \Rightarrow \Normal(0, \sigma^2)$ and then apply the result of~\citet{GlynnWhitt2002} (see Lemma~\ref{lem:direct-glynn-whitt}).   
\end{remark}

\subsection{Proof of Lemma~\ref{lem:Poisson}}

\begin{proof}
Define
\begin{equation}\label{eq:def-R-tau}
    R_\tau(g)=\sum_{j=0}^{\tau-1}g(Y_j).
\end{equation}
We first verify that $u$ is well-defined $\pi$-almost everywhere.  
Define 
\begin{equation}
   H(x) = \sum_{j=0}^{\infty} (L^j|g|) (x) = \E_x \left[ \sum_{j=0}^{\tau - 1} |g|(Y_j) \right], \qquad   
   \mathcal{N} = \{ x\in \X : H(x) = \infty \}. 
\end{equation}   
When the split chain is initialized with $Y_0\sim\pi_\C$, the path $(Y_j)_{0\leq j<\tau}$ has the law of a complete regeneration cycle. The stationary regeneration rate of the split chain is $\rho \coloneqq  \beta \pi(\C).$  

Hence, applying Kac's occupation formula (see Lemma~\ref{lem:direct-kac}) to the split chain yields 
\begin{equation}\label{eq:kac-abs-g}
\pi(|g|) = \rho \E_{\pi_\C} \left[ \sum_{j=0}^{\tau - 1} |g|(Y_j) \right] = \rho \, \E_{\pi_\C}[ H(Y_0)] = \rho \pi_\C(H). 
\end{equation}
Since $g \in L^2(\pi)$, $\pi_\C(H) = \pi(|g|) / \rho < \infty$ and thus $\pi_\C(\mathcal{N}) = 0$. 
Since $H = |g| + L H$, we have $LH \leq H$, and by induction, $L^j H\leq H$ for each $j \geq 1$. Therefore, $\pi_\C (L^j H) \leq \pi_\C(H) < \infty$, and thus $(\pi_\C L^j )(\mathcal{N}) = 0$. 
Combining these results and applying Kac's occupation formula to $\one_{\mathcal{N}}$ yields 
\[
\pi( \mathcal{N} )  = \pi( \one_{\mathcal{N}} ) 
= \rho  \E_{\pi_\C} \left[ \sum_{j=0}^{\tau - 1}   \one_{\mathcal{N}}(Y_j) \right] 
= \rho  \sum_{j=0}^{\infty} ( \pi_\C L^j)( \mathcal{N}) = 0. 
\]
Therefore, the series defining $u$ converges absolutely and $u$ is well-defined and finite $\pi$-almost everywhere. Moreover, $u(x)=\E_x[R_\tau(g)]$ wherever the series converges absolutely. Since $u = g + L u$, this yields the first Poisson equation $g = u - Lu$.  

Using $\pi(g)=m\pi(h)=0$ and replacing $|g|$ with $g$ in Kac's occupation identity~\eqref{eq:kac-abs-g} yields  
\begin{equation}
  0 = \pi(g) = \rho \pi_\C(u). 
\end{equation}
Therefore, $\pi_\C(u) =0$. Since 
\begin{equation}
    P^m(x, \cdot)  = L(x, \cdot) + \beta \one_\C(x) \pi_\C (\cdot ), 
\end{equation}
we get $P^m u = L u$, $\pi$-almost everywhere. This proves the second claimed Poisson equation. 
\end{proof}

\subsection{Proof of Proposition~\ref{lem:key-energy-identity}}

\begin{proof} 
Write $\rho = \beta \pi(\C)$. Let $R_\tau$ be defined by~\eqref{eq:def-R-tau}.  
By the definition of $u$ and Jensen's inequality,
\begin{equation}
  \pi_\C(u^2) = \int  \E_{x}[ R_{\tau}(g)]^2 \pi_\C(d x) \leq \E_{\pi_\C}[ R_\tau(g)^2 ] =  \E [Z_1^2]  < \infty. 
\end{equation}  
Here we used that, under $Y_0\sim\pi_\C$, $R_\tau(g)$ has the same distribution as $Z_1$. Since $P^m u=u-g$ and $g\in L^2(\pi_\C)$, we also have $P^m u\in L^2(\pi_\C)$.
We have the decomposition
\begin{equation}\label{eq:decomp-v}
  \E[\xi_1^2 ] 
= \int_{\X \times \X} [ u(y) - P^m u(x) ]^2 L (x, dy) \pi(dx) 
+ \rho \int_{\X} \int_{\X} [ u(y) - P^m u(x) ]^2 \pi_\C(dy) \pi_\C(dx).
\end{equation}
By Lemma~\ref{lem:Poisson}, $\pi_\C(u) = 0$, and thus 
\begin{equation}
      \int_{\X} \int_{\X} [ u(y) - P^m u(x) ]^2 \pi_\C(dy) \pi_\C(dx) 
=  \pi_\C(u^2) + \pi_\C( (P^m u)^2 ).
\end{equation}
So we can express $\E[\xi_1^2 ] $ as 
\begin{equation}\label{eq:decomp-E-xi2}
    \E[\xi_1^2 ] = \pi(a) + \rho \pi_\C(u^2), 
\end{equation}
where 
\begin{equation}\label{eq:def-a-martingale}
    a(x) = \int_{\X} [ u(y) - P^m u(x) ]^2 L(x, dy) + \beta \one_\C(x) (P^m u)(x)^2.
\end{equation} 

To characterize $\E[Z_1^2]$, we use another martingale argument. 
For this martingale argument, initialize the split chain with $Y_0 \sim \pi_\C$ and define the filtration
$
\mathcal{G}_n = \sigma(Y_0, I_0, Y_1, \ldots, I_{n-1}, Y_n). 
$
Define the martingale with respect to the filtration $\G_n$ and its difference by
\[
M_n = \E_{\pi_\C}[ R_{\tau}(g) \mid \mathcal{G}_n], \qquad  
\Delta_{n + 1} = M_{n + 1} - M_n. 
\]
By Jensen's inequality, 
\begin{equation}
    \E_{\pi_\C} [M_n^2] \leq \E_{\pi_\C}[ R_{\tau}(g)^2 ] = \E [Z_1^2] < \infty. 
\end{equation}
Therefore, $M_n$ is an $L^2$ martingale, and by L\'{e}vy's upward convergence theorem, 
\begin{equation}
    M_n \xrightarrow{L^2}  \E_{\pi_\C} [  R_\tau(g) \mid \mathcal{G}_{\infty}] = R_\tau(g), 
\end{equation}
where the equality holds since $\tau < \infty$ almost surely. 
The convergence happens in $L^2$ so that 
\[
\lim_{n \to \infty} \E_{\pi_\C}[M_n^2] = \E_{\pi_\C} [  R_\tau(g)^2  ] = \E [Z_1^2]. 
\]

It remains to find an expression for $\lim_{n \to \infty} \E_{\pi_\C}[M_n^2]$.
By construction, we can express $M_n$ as 
\[
M_n = \sum_{k = 0}^{(n \wedge \tau) - 1} g(Y_k) + u(Y_n) \one_{ \{n < \tau\} },
\]
and in particular $M_0 = u(Y_0)$. It is also clear that $\Delta_{n + 1} = 0$ if $n \ge \tau$. 
Hence, writing $M_n = M_0 + \sum_{j = 0}^{n - 1} \Delta_{j + 1}$ and using the orthogonality of $L^2$-martingale differences, we get  
\begin{equation}\label{eq:formula-E-Mn2}
    \E_{\pi_\C}[M_n^2] = \E_{\pi_\C}[M_0^2] + \E_{\pi_\C}\left[ \sum_{j = 0}^{n - 1} \Delta_{j + 1}^2 \right]  = \pi_\C(u^2) +  \E_{\pi_\C}\left[ \sum_{j = 0}^{(n \wedge \tau) - 1} \Delta_{j + 1}^2 \right]. 
\end{equation} 
Since $Y_0 \sim \pi_\C$, we can use the Poisson equation $g = u - P^m u$. If $j < \tau$ and $I_j = 0$, then  
\[
\Delta_{j + 1}  = g(Y_{j}) + u(Y_{j + 1}) - u(Y_{j})
=  u(Y_{j + 1}) - P^m u(Y_{j}).
\]
If $j < \tau$ and $I_j = 1$, then $\Delta_{j + 1} = g(Y_{j}) - u(Y_{j}) = - P^m u(Y_j)$. So
\[
\E_{\pi_\C}( \Delta_{j + 1}^2  \mid \mathcal{G}_j)
= \one_{ \{j < \tau\} } a(Y_j), 
\]
where $a$ is defined in~\eqref{eq:def-a-martingale}.  
Since $a$ is nonnegative, by Lemma~\ref{lem:direct-kac}, 
\begin{equation}
    \pi(a) = \rho\,  \E_{\pi_\C} \left[ \sum_{j=0}^{\tau - 1} a(Y_j) \right] = \rho\,  \E_{\pi_\C} \left[ \sum_{j=0}^{\tau-1} \E_{\pi_\C}( \Delta_{j + 1}^2  \mid \mathcal{G}_j)  \right]  = \rho \, \E_{\pi_\C} \left[ \sum_{j=0}^{\tau-1}   \Delta_{j + 1}^2  \right]. 
\end{equation}  
Then, by monotone convergence and~\eqref{eq:formula-E-Mn2},   
\begin{equation}
     \rho^{-1} \pi(a) =  \E_{\pi_\C} \left[ \sum_{j=0}^{\tau-1}   \Delta_{j + 1}^2  \right]  = \lim_{n \rightarrow \infty} \E_{\pi_\C}[M_n^2] - \pi_\C(u^2) = \E[Z_1^2] - \pi_\C(u^2). 
\end{equation}
Comparing with~\eqref{eq:decomp-E-xi2} completes the proof. 
\end{proof}

\bibliographystyle{plainnat}
\bibliography{main}

\end{document}